\newtheorem{theorem}{\textbf{Theorem}}
\newtheorem{proposition}{\textbf{Proposition}}
\newtheorem{conjecture}{\textbf{Conjecture}}
\newtheorem{problem}{\textbf{Problem}}
\newtheorem{lemma}{\textbf{Lemma}}
\newcommand\supp{{\operatorname{supp}}}
\newcommand\GL{{\operatorname{GL}}}
\newcommand\Sp{{\operatorname{Sp}}}
\newcommand\vmin{{v_{\operatorname{min}}}}
\newcommand\z{{\mathbf{z}}}
\title{Matrix coefficients of intertwining operators and the Bruhat order}
\author{Daniel Bump}
\address{Department of Mathematics, Stanford University, Stanford, CA 94305-2125}
\email{bump@math.stanford.edu}
\author{B\'{e}atrice Chetard}
\email{bchetard@gmail.com}
\subjclass[2010]{Primary 22E50; Secondary 20F55, 20C08}
\begin{document}
\begin{abstract}
Let $(\pi_\z,V_\z)$ be an unramified principal series representation
of a reductive group over a nonarchimedean local field, parametrized
by an element $\z$ of the maximal torus in the Langlands dual group.
If $v$ is an element of the Weyl group $W$, then the standard
intertwining integral $\mathcal{A}_v$ maps $V_\z$ to $V_{v\z}$.
Letting $\psi^\z_w$ with $w\in W$ be a suitable basis of the
Iwahori fixed vectors in $V_\z$, and $\widehat\psi^\z_w$ a basis
of the contragredient representation, we define $\sigma(u,v,w)$
(for $u,v,w\in W$) to be $\langle \mathcal{A}_v\psi_u^\z,\widehat\psi^{v\z}_w\rangle$.
This is an interesting function and we initiate its study. We
show that given $u$ and $w$, there is a minimal $v$ such that
$\sigma(u,v,w)\neq 0$. Denoting this $v$ as $\vmin=\vmin(u,w)$, we will
prove that $\sigma(u,\vmin,w)$ is a polynomial of the cardinality
$q$ of the residue field. Indeed if $v>\vmin$, then $\sigma(u,v,w)$
is a rational function of $\z$ and $q$, whose denominator we
describe. But if $v=\vmin$, the dependence on $\z$ disappears.
We will express $\sigma(u,v_\vmin,w)$
as the Poincar\'e polynomial of a Bruhat interval. The proof
leads to fairly intricate considerations of the Bruhat order.

Thus our results require us to prove some facts that may be of
independent interest, relating the Bruhat
order $\leqslant$ and the weak Bruhat order $\leqslant_R$.
For example we will prove (for finite Coxeter groups) the following ``mixed meet''
property. If $u, w$ are elements of $W$, then there exists 
a unique element $m \in W$ that is maximal
with respect to the condition that $m \leqslant_R u$ and $m \leqslant w$. Thus
if $z \leqslant_R u$ and $z \leqslant w$, then $x \leqslant m$.
The value $\vmin$ is $m^{-1}u$.
\end{abstract}
\maketitle

\section{Introduction}

The formula of Macdonald~{\cite{MacdonaldSpherical}} for the
spherical functions on a reductive $p$-adic group $G (F)$ over a
nonarchimedean local field $F$ plays an important role in the
harmonic analysis on $G (F)$. Let $(\pi, V)$ be a representation of the
unramified principal series, and let $(\hat{\pi}, \hat{V})$ be its
contragredient. The representation $\pi$ may be parametrized by
Langlands-Satake parameters $\mathbf{z}$ residing in a complex torus
$\hat{T} (\mathbb{C})$, and we will thus write $(\pi, V) =
(\pi_{\mathbf{z}}, V_{\mathbf{z}})$.

The space $V$ contains a {\textit{spherical}} vector $\phi^{\circ}$, unique up
to scalar multiple, that is invariant under the standard maximal compact
subgroup $K$; similarly let $\hat{\phi}^{\circ}$ be the spherical vector in
$\hat{V}$. The Macdonald spherical function is
\[ \varsigma (g) = \langle \pi_{\mathbf{z}} (g) \phi^{\circ},
   \hat{\phi}^{\circ} \rangle . \]
Macdonald's formula identifies the value $\varsigma (g)$ with a symmetric
function of $\mathbf{z}$ that for $G = \GL_r$ is a Hall-Littlewood polynomial.

In $1980$ Casselman~{\cite{CasselmanSpherical}} gave a new proof of the
Macdonald formula. This proof involved two ingredients:

\begin{itemize}
\item Even though the Macdonald formula concerns just $\phi^{\circ}$ in the
one-dimensional space $V^K$ of $K$-invariants, it is useful to work with
the larger space $V^J$ of vectors invariant under the {\textit{Iwahori
subgroup}} $J$ of $K$;
\item If $w$ is an element of the Weyl group $W$ there is an {\textit{intertwining
integral operator}} $\mathcal{A}_w : V_{\mathbf{z}} \longrightarrow
V_{w\mathbf{z}}$; systematic use of these and related linear functionals was
another key ingredient of the proof.
\end{itemize}

These two ingredients have played an important role in subsequent theory.
Thus it is very natural to investigate
\[ \sigma (u, v, w) = \langle \mathcal{A}_v \psi^{\mathbf{z}}_u,
   \hat{\psi}^{v\mathbf{z}}_w \rangle, \]
where $u,v,w$ are elements of $W$. Here
$\psi_u^{\mathbf{z}}$ runs through a basis of Iwahori fixed vectors
$V_{\mathbf{z}}^J$ and $\hat{\psi}^{v\mathbf{z}}_w$ runs through a basis
of the Iwahori fixed vectors $\hat{V}_{v\mathbf{z}}^J$ in the contragredient
representation $(\hat{\pi}, \hat{V}_{v\mathbf{z}})$.

To describe these bases, let $\phi_w^{\mathbf{z}}$ be the element of
$V^J_{\mathbf{z}}$ whose restriction to $K$ is the characteristic function
of the double coset $J w J$. We may identify $\hat{V}_{\mathbf{z}}$ with
$V_{\mathbf{z}^{- 1}}$. (See Section~\ref{sec:padic} for further
information.) Then we have a corresponding basis $\phi_w^{\mathbf{z}^{- 1}}$
of $\hat{V}_{\mathbf{z}}$. Then we define
\[ \psi_w^{\mathbf{z}} = \bigcup_{u \geqslant w} \phi_u^{\mathbf{z}},
   \qquad \hat{\psi}_w^{\mathbf{z}} = \bigcup_{y \leqslant w}
   \phi_y^{\mathbf{z}^{- 1}} . \]

It may seem inconsistent that in $\psi_w^{\mathbf{z}}$ we sum over $u \geqslant
w$ in the Bruhat order while in $\hat{\psi}_w^{\mathbf{z}}$ we sum over $y
\leqslant w$. The explanation for this is that it leads to $\sigma (u, v, w)$
with the nicest combinatorial properties. For example let us consider the
simplest case, where $v = 1$. Then
\[ \langle \phi^{\mathbf{z}}_x, \hat{\phi}^{\mathbf{z}}_y \rangle =
   \delta_{x y} q^{\ell (x)} \]
where $q$ is the cardinality of the residue field and $\ell$ is the length
function on the Weyl group. Therefore
\[ \sigma (u, 1, w) = \sum_{\substack{
     x \geqslant u\\
     y \leqslant w}}
   \langle \phi^{\mathbf{z}}_x, \hat{\phi}^{\mathbf{z}}_y
   \rangle = \sum_{u \leqslant x \leqslant w} q^{\ell (x)} . \]
This vanishes unless $u \leqslant w$, so assume this. We recognize $\sigma (u,
1, w)$ as the Poincar{\'e} polynomial of the Bruhat interval
$[u,w]=\{x\in W|u\leqslant x\leqslant w\}$.

A second case where $\sigma$ has already appeared in practice is if $w = 1$.
We will discuss this next.

Casselman made use of the linear functionals $\phi \mapsto \mathcal{A}_w \phi
(1)$ on $V^J_{\mathbf{z}}$, and the basis $f_w$ of $V^J_{\mathbf{z}}$ is
called the {\textit{Casselman basis}}. These were used in
Casselman~{\cite{CasselmanSpherical}} and Casselman and
Shalika~{\cite{CasselmanShalika}} in order to prove the Macdonald formula for
the spherical function $\varsigma$ already mentioned, and the
Casselman-Shalika formula; this technique has been applied in many subsequent
papers.

As Casselman pointed out, computing the Casselman basis explicitly is a
difficult problem; fortunately to prove the Macdonald and Casselman-Shalika
formulas, only $f_{w_0}$ needs to be known explicitly, since the $f_w$ appear
in an integrated form, and once one integral is known, the others may be
inferred through functional equations.

Still, the $f_w$ are interesting objects. To study them, Bump and
Nakasuji~{\cite{BumpNakasuji,BumpNakasujiKL}} examined the values of the
functionals $\phi \mapsto \mathcal{A}_w \phi (1)$ introduced by Casselman on
the basis $\psi_w^{\mathbf{z}}$ and defined
\begin{equation}
\label{muvdef}
m_{u, v} =\mathcal{A}_v \psi_u (1) .
\end{equation}
About these they found interesting results and conjectures, which were refined
by Nakasuji and Naruse~{\cite{NakasujiNaruse,NaruseHook}}; the conjectures were
eventually proved using deep methods of algebraic geometry by Aluffi,
Mihalcea, Sch{\"u}rmann and Su~{\cite{AMSS}}. See also~\cite{LeeLenartEtAl}.

To connect this with the present
study, note that the functional $\mathcal{A}_w \phi (1)$ in (\ref{muvdef})
is the vector $\hat{\psi}_1^{\mathbf{z}} = \phi_1^{\mathbf{z}^{- 1}}$ in the
contragredient representation. Therefore in the notation of the present paper
\[ m_{u, v} = \sigma (u, v, 1) . \]

Now $m_{u, v}$ vanishes unless $u \leqslant v$ in the Bruhat order, and
assuming this, Bump and Nakasuji~{\cite{BumpNakasujiKL}} proved that
\[ m_{u, v} = \sum_{u \leqslant x \leqslant v} \overline{r_{x, v}}
   (\mathbf{z}) \]
where $r_{x, v} (\mathbf{z})$ are certain deformations of the
Kazhdan-Lusztig R-polynomials, and the ``bar'' means that $q$ is replaced by
$q^{- 1}$ in this formula. A similar formula is also stated in Nakasuji and
Naruse~{\cite{NakasujiNaruse,NaruseHook}}.

It was conjectured in
{\cite{BumpNakasuji,BumpNakasujiKL}} and proved in~{\cite{AMSS}} that the
denominator of $m_{u, v}$ has the form
\[ \prod_{\alpha \in S (u, v)} (1 -\mathbf{z}^{\alpha}) \]
where $S (u, v)$ is the set of positive roots $\alpha$ of $G$ such that $u
\leqslant v \cdot r_{\alpha} < v$, where $r_{\alpha} \in W$ is the reflection
associated to $\alpha$. It follows from the Deodhar inequality
({\cite{DeodharConjecture,Carrell,Polo,Dyer}}) that
\begin{equation}
\label{deodharineq}| S (u, v) | \geqslant \ell
(v) - \ell (u).
\end{equation}
Moreover $| S (u, v) | = \ell (v) - \ell (u)$ provided the
inverse Kazhdan Lusztig polynomial $Q_{u, v} = 1$; this is the polynomial
$P_{w_0 v, w_0 u}$ in the notation of {\cite{KazhdanLusztig}}. 

Bump and Nakasuji conjectured that if $G$ is simply-laced and
the Kazhdan-Lusztig polynomial $Q_{u, v} = 1$ then
\begin{equation}
\label{gkmuv}
m_{u, v} = \prod_{\alpha \in S (u, v)} \frac{1 - q^{- 1}
   \mathbf{z}^{\alpha}}{1 -\mathbf{z}^{\alpha}} .
\end{equation}
This is a generalization of the Gindikin-Karpelevich
formula~{\cite{LanglandsEuler}}, which is the case where $u = 1$; but it is much
harder to prove than the Gindikin-Karpelevich (GK) formula. This conjecture
was extended by Nakasuji and Naruse {\cite{NaruseHook,NakasujiNaruse}} to the non-simply-laced case by
replacing the Kazhdan-Lusztig criterion with the a nonsingularity condition
for Schubert varieties. In this form, the conjecture was proved by
Aluffi, Mihalcea, Sch\"urmann and Su~\cite{AMSS} using techniques
from algebraic geometry, particularly \textit{motivic Chern classes}.

We remark also that if $w = w_0$ is the long Weyl group element, then $\sigma
(u, v, w_0)$ is connected with the theory of matrix coefficients of the form
$\langle \pi (g) \psi_u^{\mathbf{z}}, \hat{\psi}_{w_0}^{\mathbf{z}}
\rangle$, where here $\hat{\psi}_{w_0}$ is the spherical vector. 
This connects with the theory of nonsymmetric Macdonald polynomials
by work of Ion~{\cite{IonMatrix}}.

We found that investigating any question about $\sigma(u,v,w)$ leads
into subtle considerations concerning the Bruhat order $\leqslant$ and the weak
Bruhat order $\leqslant_R$. For example consider the question of whether
$\sigma (u, v, w)$ is nonzero. We are able to give a satisfactory
answer to this (Theorem~\ref{maintheorem} below), 
but to explain it we must explain a relationship
between the two orders.

Recall (\cite[Chapter 3]{BjornerBrenti}) that $u \leqslant_R w$ if $\ell (w) =
\ell (u) + \ell (u^{- 1} w)$, which is the same as saying that $w$ has a
reduced expression $w = s_{i_1} \cdots s_{i_r}$ such that an initial segment
$s_{i_1} \cdots s_{i_k}$ represents $u$. The left order $\leqslant_L$ is
defined similarly.

If $u, w \in W$ then there exists a ``meet'' $x \in W$ for the weak order,
making the Weyl group into a meet semilattice. This means that \ $x
\leqslant_R u$ and $x \leqslant_R w$, and $x$ is the unique maximal element
with this property, so if $z \leqslant_R u$ and $z \leqslant_R w$ then $z
\leqslant_R x$.

The strong Bruhat order $\leqslant$ does not have this property. For example
in Cartan type~$A_2$ if $u = s_1 s_2$ and $w = s_2 s_1$ then
\[ \{ z \in W|z \leqslant u, w \} = \{ 1, s_1, s_2 \} \]
and this set does not have a unique maximal element.

However we will show (for finite Coxeter groups $W$) that $u, w$ have a
{\textit{mixed meet}} for the Bruhat and weak orders. We will prove below in
Theorem~\ref{thm:mixedmeet} that if $u, w$ are elements of $W$, a finite
Coxeter group, then there exists a unique element $m \in W$ that is maximal
with respect to the condition that $m \leqslant_R u$ and $m \leqslant w$. Thus
if $z \leqslant_R u$ and $z \leqslant w$, then $z \leqslant m$. We will call
$m$ the {\textit{mixed meet}} of $u$ and $w$ with respect to the weak and strong
Bruhat orders.

Now let $\vmin = \vmin (u, w) = m^{- 1} u$. We will prove:

\begin{theorem}
  \label{maintheorem}
  We have $\sigma (u, v, w) = 0$ unless $v \geqslant \vmin (u, v)$. If $v = \vmin
  (u, v)$ then $\sigma (u, v, w)$ is a polynomial in $q$, independent
  of~$\mathbf{z}$. More precisely,
  \begin{equation}
  \label{sigmabruhat}
  \sigma(u,\vmin(u,v),w)=q^{-\ell(v)}\sum_{z\in[u,wv]}q^{\ell(z)}.
  \end{equation}
\end{theorem}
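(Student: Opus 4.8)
The plan is to reduce everything to a combinatorial identity by first analyzing the intertwining operator one simple reflection at a time. Recall that any $\mathcal A_v$ factors as a composition $\mathcal A_{s_{i_k}}\circ\cdots\circ\mathcal A_{s_{i_1}}$ along a reduced word $v=s_{i_1}\cdots s_{i_k}$, and each rank-one operator $\mathcal A_{s}$ acts on the Iwahori-fixed vectors by an explicit $2\times 2$ matrix in each $\langle \phi_y, \phi_{ys}\rangle$-block, with entries involving $q$ and a single factor $1-q^{-1}\z^{\alpha}$ or $1-\z^{\alpha}$. So the first step is to record, in the basis $\{\phi_w^\z\}$ (or dually $\{\phi_y^{\z^{-1}}\}$), the precise action of $\mathcal A_s$, together with the pairing $\langle\phi_x^\z,\phi_y^{\z^{-1}}\rangle=\delta_{xy}q^{\ell(x)}$. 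Then $\sigma(u,v,w)=\sum_{x\geqslant u,\,y\leqslant w}\langle\mathcal A_v\phi_x^\z,\phi_y^{(v\z)^{-1}}\rangle$, and the strategy is to understand the lowest-order term in a suitable filtration.

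Second, I would set up the right filtration/degree count that isolates $\vmin$. The key heuristic from the $v=1$ case and from the $m_{u,v}=\sigma(u,v,1)$ theory is that the $\z$-dependence enters only through denominators $\prod(1-\z^\alpha)$, and that these denominators cancel exactly when $v$ is as small as possible. Concretely I expect that, writing $v=s_{i_1}\cdots s_{i_\ell}$ reduced, the matrix coefficient $\sigma(u,v,w)$ is a sum over ``subwords'' (Deodhar-style distinguished subexpressions) of products of rank-one contributions, and that each subword contributes a monomial in $q$ times a ratio of cyclotomic-type factors in $\z$; the claim $v\geqslant\vmin$ amounts to showing all such subwords vanish unless $v$ dominates $\vmin$ in Bruhat order, and the claim that the $\z$-dependence disappears at $v=\vmin$ amounts to showing that for $v=\vmin$ every surviving subword picks up only the ``constant'' entries (the pure powers of $q$), never the $\z$-twisted ones. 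This is where the mixed-meet element $m$ and the identity $\vmin=m^{-1}u$ become essential: $m\leqslant_R u$ gives a reduced word for $u$ through $m$, hence a reduced word $u=m\cdot\vmin$ with $\ell(u)=\ell(m)+\ell(\vmin)$, and $m\leqslant w$ controls which subwords of the $v$-word can pair nontrivially against the $y\leqslant w$ side.

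Third, to get the precise formula \eqref{sigmabruhat}, I would compute $\sigma(u,\vmin,w)$ directly. Since $\z$ drops out, I may evaluate at a convenient (possibly formal) specialization of $\z$, or better, track only the $q$-monomials: each of the $\ell(v)$ rank-one steps along the reduced word for $\vmin$ contributes either a factor $q^{-1}$ (overall giving the prefactor $q^{-\ell(v)}$) or shuffles the Bruhat-indexing, and the net effect should be that $\mathcal A_{\vmin}$ carries the ``$x\geqslant u$'' condition on $V_\z$ to an ``$x'\geqslant u$'' type condition on $V_{\vmin\z}$ that, after pairing with $\sum_{y\leqslant w}\phi_y$, reorganizes into $\sum_{z\in[u,w\vmin^{-1}]}$ or $[u,w\vmin]$ — here one must be careful about whether the natural interval is $[u,wv]$ or $[u,wv^{-1}]$ and about left vs.\ right translates; the statement \eqref{sigmabruhat} says it is $[u,wv]$ with $v=\vmin$. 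I would verify the two sanity checks: $v=1$ recovers $\sigma(u,1,w)=\sum_{[u,w]}q^{\ell(z)}$, and $w=1$ should recover the known value of $m_{u,\vmin}$ (which, since $Q$-polynomials are trivial in the minimal case, is a pure $q$-polynomial consistent with the Deodhar equality in \eqref{deodharineq}).

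The main obstacle will be the bookkeeping in step two: proving that the minimal $v$ with $\sigma(u,v,w)\neq 0$ is exactly $\vmin=m^{-1}u$, and that at that value the off-diagonal, $\z$-twisted entries of the rank-one operators never contribute. This is genuinely a statement about the interaction of $\leqslant_R$ (coming from the factorization of $\mathcal A_v$ along reduced words, and from $m\leqslant_R u$) with $\leqslant$ (coming from the support conditions $x\geqslant u$ and $y\leqslant w$), so I expect the proof to lean heavily on Theorem~\ref{thm:mixedmeet} and on careful lemmas about how $\phi_x^\z\mapsto\mathcal A_s\phi_x^\z$ moves support in the Bruhat order — essentially, that $\mathcal A_s$ either raises or lowers by a single reflection with controlled coefficients, and that iterating this along a reduced word for $v$ cannot reach the pairing region unless $v$ contains enough of $u$ "on the left". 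Once that vanishing/constancy statement is in hand, extracting the Poincaré polynomial of $[u,wv]$ should be a relatively formal manipulation of the resulting subword sum.
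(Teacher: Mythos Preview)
Your proposal correctly identifies the high-level architecture---factor $\mathcal A_v$ along a reduced word, track how $\z$-dependence enters through the rank-one pieces, and use the mixed meet to pin down $v_{\min}$---but it remains a strategy rather than a proof, and it misses the specific mechanism the paper uses to make each step go through.

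The paper does \emph{not} work directly with the $2\times 2$ blocks of the $\mathcal A_s$. Instead it first transfers the problem to the Hecke algebra via the isomorphism $\alpha_{\z}:V_\z^J\to\mathcal H_q$, under which $\mathcal A_v$ becomes right multiplication by $\mu_\z(v)=\sum_{y\leqslant v}q^{-\ell(y)}\overline{r_{y,v}}(\z)\,T_{y^{-1}}$. This yields the clean decomposition
\[
\sigma(u,v,w)=\sum_{x\geqslant u,\ y\leqslant v} q^{-\ell(y)}\,\Theta(x,y,w)\,\overline{r_{y,v}}(\z),
\qquad \Theta(x,y,w)=\Lambda_w(T_xT_{y^{-1}}),
\]
in which the $\z$-dependence sits entirely in $r_{y,v}$ and the combinatorics entirely in $\Theta$. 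Your subword/filtration idea is morally the same object, but without isolating $\Theta$ you cannot run the key support argument: $\Theta(x,y,w)\neq 0$ forces $xy^{-1}\leqslant w$ (Proposition on the support of $T_xT_{y^{-1}}$), hence $y\in[1,w^{-1}]x$, and the minimum of that translated interval is $U_{w^{-1}}\downarrow x\geqslant U_{w^{-1}}\downarrow u=v_{\min}$. This is what gives both the vanishing for $v\not\geqslant v_{\min}$ and, when $v=v_{\min}$, the collapse to the single term $y=v$ (so $r_{v,v}=1$ and the $\z$-dependence disappears). Your step two never names this support bound, and your phrase ``never the $\z$-twisted ones'' has no proof attached to it.

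For the explicit formula~\eqref{sigmabruhat}, the paper needs two further nontrivial facts you have not anticipated: first, a ``ladder'' argument (Proposition~\ref{ladder}) showing that $\{z:\,z\geqslant u,\ zv^{-1}\leqslant w\}$ coincides with the Bruhat interval $[u,wv]$ and that along any $z$ in it one has the full descent chain $z>zs_k>\cdots>zv^{-1}$; second, an evaluation $\Theta(z,v,w)=q^{\ell(z)}$ for such $z$ (Proposition~\ref{thetaeval}), which uses Lemma~\ref{ionlyv} to rule out all shorter subwords of $v$. Your expectation that this last step is ``a relatively formal manipulation'' understates it: both propositions require repeated, careful use of the lifting property and the monotonicity of the $\downarrow$ operation, and they are where most of the Bruhat-order work in the paper actually lives.
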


Let us denote $\sigma_0 (u, w) = \sigma (u, \vmin(u,w), w)$.

There is one more interesting phenomenon that we only partially
understand. For many $(u,v,w)$ we find that $\sigma(u,v,w)$ has
a special form. In these cases
\begin{equation}
\label{gkform}
\sigma(u,v,w)=\sigma_0(u,w)\,\prod_{\alpha\in S(u,v,w)}\frac{1-q^{-1}\z^{\alpha}}{1-\z^\alpha}
\end{equation}
where we recall that $\sigma_0(u,w)$ is a polynomial in $q$. Here $S(u,v,w)$
is the set of positive roots $\alpha$ defined in (\ref{suvwdef})
below. This formula generalizes (\ref{gkmuv}), which we have
already mentioned
resembles the \textit{Gindikin-Karpelevich formula} which is the
case $w=1$. Therefore if (\ref{gkform}) is valid we will say that
the triple $(u,v,w)$ is \textit{of GK type}.
In contrast with that special case,
we do not know precisely which of $(u,v,w)$ are of GK type.
But if $w=1$, so $\sigma(u,v,1)=m_{u,v}$, we have already
explained that these have been determined in
\cite{BumpNakasuji,BumpNakasujiKL,NakasujiNaruse,NaruseHook,AMSS}.

If $G=\GL(3)$, so $W=S_3$, there are 167
triples $(u,v,w)$ with $v\geqslant \vmin(u,w)$. (Recall that these are the
cases where $\sigma(u,v,w)\neq 0$.) Of these, all but 20 are
of GK type. The exceptional triples $(u,v,w)$ such that $\sigma(u,v,w)$
is \textit{not} of the form (\ref{gkform}) are:
\[
\begin{array}{rcccl}
(s_{1}s_{2},s_{1}s_{2}s_{1},s_{1}s_{2})&\qquad&
(s_{1}s_{2},s_{1}s_{2},s_{1}s_{2})&\qquad&
(s_{2}s_{1},s_{1}s_{2}s_{1},s_{2}s_{1})\\
(s_{2}s_{1},s_{2}s_{1},s_{2}s_{1})&\qquad&
(s_{1},s_{1}s_{2}s_{1},s_{1}s_{2})&\qquad&
(s_{1},s_{2}s_{1},s_{1}s_{2})\\
(s_{1},s_{1}s_{2},s_{1}s_{2})&\qquad&
(s_{1},s_{1},s_{1}s_{2})&\qquad&
(s_{1},s_{1}s_{2}s_{1},s_{2}s_{1})\\
(s_{1},s_{2}s_{1},s_{2}s_{1})&\qquad&
(s_{1},s_{1}s_{2}s_{1},s_{1})&\qquad&
(s_{1},s_{2}s_{1},s_{1})\\
(s_{2},s_{1}s_{2}s_{1},s_{1}s_{2})&\qquad&
(s_{2},s_{1}s_{2},s_{1}s_{2})&\qquad&
(s_{2},s_{1}s_{2}s_{1},s_{2}s_{1})\\
(s_{2},s_{2}s_{1},s_{2}s_{1})&\qquad&
(s_{2},s_{1}s_{2},s_{2}s_{1})&\qquad&
(s_{2},s_{2},s_{2}s_{1})\\
(s_{2},s_{1}s_{2}s_{1},s_{2})&\qquad&
(s_{2},s_{1}s_{2},s_{2})&\qquad& \\
\end{array}\]

If $G=\Sp(4)$ so $W$ is dihedral of order 8, there are 401 triples
$(u,v,w)$ such that $v\geqslant\vmin(u,w)$, and of these (\ref{gkform})
is satisfied by 305.
If $G=\GL(4)$, so $S_4$, there are 9597 such triples,
and of these, (\ref{gkform}) is satisfied by 6281. 

\begin{problem}
Determine for which $(u,v,w)$ equation (\ref{gkform}) is satisfied.
\end{problem}

\medbreak
\textit{Acknowledgements:}
We used Sage Mathematical Software~\cite{sagemath} to carry out this research.
This work was partially supported by NSF grant DMS-1601026. We are
grateful to Valentin Buciumas, Eric Marberg, and Maki Nakasuji for helpful 
comments. Especially, Eric Marberg gave us very useful advice about
technical points of the Bruhat order.

\section{The Bruhat order and the weak order\label{sec:bruhat}}

Let $W$ be a Coxeter group. After a preliminary result that is valid for
general Coxeter groups we will assume that $W$ is finite. Let $\{ s_1, \cdots,
s_r \}$ be the simple reflections in $W$. Let $w_0$ denote its longest
element. It is probable that some of our results apply more generally to
arbitrary Coxeter groups but we will make use of $w_0$ in the proofs.

We will denote by $\leqslant$ the usual (strong) Bruhat order on $W$. We will
also denote by $\leqslant_R$ the weak Bruhat order. The weak Bruhat order
$\leqslant_R$ can be defined in terms of the length function $\ell$ on $W$ by
$u \leqslant_R v$ if $\ell (u) + \ell (u^{- 1} v) = \ell (v)$.
An equivalent definition is that $u\leqslant_R v$ if there exists
a reduced word $s_{i_1}\cdots s_{i_n}$ for $v$ such that the initial
segment $s_{i_1}\cdots s_{i_k}$ is a reduced word for $u$, for some $k\leqslant n$.
See~{\cite{BjornerBrenti}} Proposition~3.1.2 for other characterizations.

We will make use of two versions of the Hecke algebra of $W$. The first has a
basis $T_w$ ($w \in W$) such that if $s = s_i$ is a simple reflection and $s w
> w$ then $T_s T_w = T_{s w}$ and
\[ T_s T_{s w} = (q - 1) T_{s w} + q T_w . \]
See {\cite{HumphreysCoxeter}} Theorem~7.1 for the construction of
$\mathcal{H}_q$. We will denote $T_{s_i} = T_i$.

We will also use a variant $\mathcal{H}_0$ which is the same as
$\mathcal{H}_q$ with $q$ specialized to $0$. We will use $U_w$ to represent
the basis elements, so
\[ U_s U_w = \left\{\begin{array}{ll}
 U_{s w} & \text{if $s w > w$,}\\
 U_w & \text{if $s w < w$} .
\end{array}\right. \]
Again $U_i = U_{s_i}$.

The basis elements $U_w$ of $\mathcal{H}_0$ form a monoid $\mathcal{M}_0$, and
$\mathcal{H}_0$ is the algebra of the monoid $\mathcal{M}_0$. We will
call $\mathcal{M}_0$ the {\textit{Demazure}} monoid. Since $i : W
\longrightarrow \mathcal{M}_0$ defined by $i (w) = U_w$ is a bijection, we may
transfer the multiplication in $\mathcal{M}_0$ back to $W$. This
multiplication $\circ$ on $W$ is sometimes called the {\textit{Demazure}}
multiplication.

Define maps $u_i^{\vee}, u_i^{\wedge} : W \longrightarrow W$ by
\[ u_i^{\wedge} (v) = \left\{\begin{array}{ll}
 s_i v & \text{if $s_i v > v$,}\\
 v & \text{if $s_i v < v$} .
\end{array}\right. \qquad u_i^{\vee} (v) = \left\{\begin{array}{ll}
 v & \text{if $s_i v > v$,}\\
 s_i v & \text{if $s_i v < v$} .
\end{array}\right. \]
\begin{lemma}
\label{monoidbraid}Suppose that $s_i s_j$ has finite order. Then
$u^{\wedge}_i$ and $u^{\vee}_i$ satisfy the braid relation $u_i u_j \cdots =
u_j u_i \cdots$, where the number of terms on both sides is the order of
$s_i s_j$. 
\end{lemma}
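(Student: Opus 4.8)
The plan is to prove that the two maps $u_i^{\wedge}$ and $u_i^{\vee}$ satisfy the braid relations by reducing each to the known braid relations in a monoid or group where they already hold. First I would dispose of $u_i^{\wedge}$: observe that $u_i^{\wedge}(v) = s_i \circ v$ in the Demazure multiplication, since left multiplication by $U_{s_i}$ in $\mathcal{M}_0$ sends $U_v$ to $U_{s_i v}$ when $s_i v > v$ and to $U_v$ otherwise, which is exactly the definition of $u_i^{\wedge}$. Because $\mathcal{M}_0$ is a monoid and the elements $U_i = U_{s_i}$ are known to satisfy the braid relations there (the Demazure/$0$-Hecke monoid relations — $U_i^2 = U_i$ together with the braid relations — are standard, and in any case the braid relations in $\mathcal{H}_0$ are inherited from those in $\mathcal{H}_q$ by specializing $q = 0$), the operators $v \mapsto U_i \circ v$ satisfy the braid relations as operators on $W$. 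Writing out $u_i^{\wedge} u_j^{\wedge} \cdots (v) = U_i \circ U_j \circ \cdots \circ v$ and comparing with the reversed product gives the braid relation for $u_i^{\wedge}$ immediately.

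The map $u_i^{\vee}$ requires a dual argument. I would like to say that $u_i^{\vee}$ is, in a suitable sense, the ``opposite'' of $u_i^{\wedge}$. One clean approach: $u_i^{\vee}(v)$ is the $<$-\emph{smaller} of $v$ and $s_i v$, so it equals $\min\{v, s_i v\}$, whereas $u_i^{\wedge}(v) = \max\{v, s_i v\}$. Consider the order-reversing bijection $v \mapsto w_0 v$ of $W$ (valid since $W$ is finite — though Lemma~\ref{monoidbraid} as stated allows general Coxeter type for the braid relation $s_i s_j$ of finite order, one can localize to the finite parabolic generated by $s_i, s_j$). Under $v \mapsto w_0 v$, the pair $\{v, s_i v\}$ maps to $\{w_0 v, w_0 s_i v\}$; left multiplication by $s_i$ does not interact simply with left multiplication by $w_0$, so instead I would use the \emph{right} analogue or pass through inverses. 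The cleanest device: note $u_i^{\vee}(v) = \big(u_i^{\wedge}(v^{-1} w_0)\big)^{-1} w_0$ or a similar conjugation identity; check that the conjugating bijection is an involution, and conclude that since $u_i^{\wedge}$'s satisfy the braid relations, so do the $u_i^{\vee}$'s, because conjugating every operator in a word by a fixed bijection preserves all identities between words. One must verify the identity relating $u_i^{\vee}$ and $u_i^{\wedge}$ carefully on both of the defining cases ($s_i v > v$ and $s_i v < v$), using that $x \mapsto x w_0$ and $x \mapsto x^{-1}$ each reverse the Bruhat order and the length-parity relation with $s_i$.

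Alternatively — and this may be the smoother route to write up — I would introduce the second $0$-Hecke-type monoid whose generators $v \mapsto u_i^{\vee}(v)$ correspond to multiplication by idempotents $V_i$ with $V_i V_w = V_w$ if $s_i w > w$ and $V_i V_w = V_{s_i w}$ if $s_i w < w$; this is the ``antidominant'' Demazure monoid, and its braid relations are again a specialization of a Hecke-algebra computation (take $T_s \mapsto -1 - $ the usual idempotent, i.e. the other idempotent $\tfrac{T_s - q}{-1-q}$ specialized at $q=0$, which is $-1+$ nothing — concretely $e_s' = $ the element with $e_s'^2 = e_s'$ and the correct braid relations). Then $u_i^{\vee}$ satisfies the braid relation because the $V_i$ do. Either way the core content is the same: both operators are ``multiplication by a generator'' in a monoid known to satisfy braid relations.

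The main obstacle I anticipate is purely bookkeeping: being careful that the braid relations genuinely do hold in $\mathcal{H}_0$ (one should point to a reference or give the one-line specialization argument from $\mathcal{H}_q$, noting the relevant polynomial identities have integer coefficients so $q=0$ is harmless), and, for $u_i^{\vee}$, nailing down the exact involutive bijection that conjugates $u^{\wedge}$ into $u^{\vee}$ and checking it case-by-case. There is also a small subtlety in that Lemma~\ref{monoidbraid} is asserted for possibly-infinite Coxeter groups while $w_0$ only exists in the finite case; I would handle this by restricting attention to the finite dihedral parabolic subgroup $\langle s_i, s_j\rangle$, where $w_0$ of that parabolic exists and the maps $u_i^{\wedge}, u_i^{\vee}$ restrict compatibly (since whether $s_i v > v$ depends only on the coset representative in $\langle s_i \rangle \backslash W$, and the braid word only involves $s_i, s_j$). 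With that localization, the finite-group argument applies verbatim.
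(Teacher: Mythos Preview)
Your approach differs substantially from the paper's. The paper argues directly: for any $v\in W$, both alternating words $u_i^{\vee}u_j^{\vee}\cdots$ and $u_j^{\vee}u_i^{\vee}\cdots$ of length $n_{ij}$ (the order of $s_is_j$) carry $v$ to the unique shortest element of the coset $D_{ij}v$, where $D_{ij}=\langle s_i,s_j\rangle$, since each step stays in that coset and weakly decreases length, and $n_{ij}$ steps suffice to reach the bottom from anywhere in the coset regardless of which generator one starts with. The $u^{\wedge}$ case is identical with ``longest'' replacing ``shortest''. Your $u_i^{\wedge}$ argument via left multiplication by $U_i$ in $\mathcal{H}_0$ is a legitimate alternative; note, though, that in the paper's logical order this lemma is what \emph{justifies} extending $u_i^{\wedge}$ to an $\mathcal{M}_0$-action, so you are in effect running that deduction backwards---which is fine, because the braid relations among the abstract $U_i$ are already available from the construction of $\mathcal{H}_q$.

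The $u_i^{\vee}$ part, however, has a concrete error. The identity you tentatively propose, $u_i^{\vee}(v)=\bigl(u_i^{\wedge}(v^{-1}w_0)\bigr)^{-1}w_0$, is false: in $S_3$ with $i=1$ and $v=s_2$ one has $u_1^{\vee}(s_2)=s_2$, but the right-hand side equals $\bigl(u_1^{\wedge}(s_1s_2)\bigr)^{-1}w_0=(s_1s_2)^{-1}w_0=s_2s_1\cdot s_1s_2s_1=s_1$. A correct conjugation identity is $u_i^{\vee}(v)=w_0\,u_{\sigma(i)}^{\wedge}(w_0v)$ where $s_{\sigma(i)}=w_0s_iw_0$; this does transfer the braid relation, but now the diagram automorphism $\sigma$ intervenes and you must also note that it preserves braid lengths. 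Your localization to the finite parabolic is likewise not quite as stated: the maps $u_i^{\vee},u_j^{\vee}$ do not restrict to $D_{ij}$ itself, but they do preserve each coset $D_{ij}v$ and act there via the identification $d\mapsto dv_0$ with $D_{ij}$ ($v_0$ the minimal coset representative). Once you make that precise you have essentially reproduced the paper's coset argument, so the simpler route is to give that argument directly.
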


\begin{proof}
Let $D_{i j}$ be the dihedral group generated by $s_i$ and $s_j$. Applied to
any $v \in W$, it is easy to see that \ $u_i^{\vee} u^{\vee}_j \cdots$ and
$u^{\vee}_j u^{\vee}_i \cdots$ both produce the shortest element of the
coset $D_{i j} v$ in at most $n_{i j}$ steps, where $n_{i j}$ is the order
of $s_i s_j$. Hence $u^{\vee}_i u^{\vee}_j \cdots (v) = u^{\vee}_j
u^{\vee}_i \cdots (v)$. The braid relation for $u_i^{\wedge}$ is proved
similarly.
\end{proof}

If $u, v \in W$ define
\[ U_i \uparrow v = \left\{\begin{array}{ll}
 s_i v & \text{if $s_i v > v$,}\\
 v & \text{if $s_i v < v$,}
\end{array}\right. \qquad U_i \downarrow v = \left\{\begin{array}{ll}
 v & \text{if $s_i v > v$,}\\
 s_i v & \text{if $s_i v < v$,}
\end{array}\right. \]
\[ v \uparrow U_i = \left\{\begin{array}{ll}
 v s_i & \text{if $v s_i > v$},\\
 v & \text{if $v s_i < v$},
\end{array}\right. \qquad v \downarrow U_i = \left\{\begin{array}{ll}
 v & \text{if $v s_i > v$},\\
 v s_i & \text{if $v s_i < v$} .
\end{array}\right. \]
\begin{proposition}
The operations $U_i : v \longrightarrow U_i \uparrow v$, $U_i : v
\longrightarrow U_i \downarrow v$ extend to left actions of the monoid
$\mathcal{M}_0$ on $W$, meaning that
\begin{equation}
\label{upaction} (U U') \uparrow v = U \uparrow (U' \uparrow v), \qquad (U
U') \downarrow v = U \downarrow (U' \downarrow v), \qquad U, U' \in
\mathcal{M}_0, v \in W.
\end{equation}
Similarly \ $U_i : v \longrightarrow v \uparrow U_i$, $U_i : v
\longrightarrow v \downarrow U_i$ extend to right actions of $\mathcal{M}_0$
such that
\begin{equation}
\label{downaction} v \uparrow (U U') = (v \uparrow U) \uparrow U', \qquad
v \downarrow (U U') = (v \downarrow U) \downarrow U', \qquad U, U' \in
\mathcal{M}_0, v \in W.
\end{equation}
\end{proposition}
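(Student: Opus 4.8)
The plan is to recognize all four operations as genuinely algebraic operations inside the Demazure monoid $\mathcal{M}_0$, so that the associativity statements (\ref{upaction}) and (\ref{downaction}) become formal. I would first record the right multiplication rule in $\mathcal{M}_0$, namely $U_w U_i = U_{w s_i}$ if $w s_i > w$ and $U_w U_i = U_w$ otherwise; this follows from the rule stated for $U_s U_w$ together with the anti-automorphism $U_w \mapsto U_{w^{-1}}$ of $\mathcal{H}_0$ coming from reversal of reduced words. Writing $i \colon W \to \mathcal{M}_0$, $i(w) = U_w$, for the canonical bijection, the generator formulas in the Proposition then read exactly $i^{-1}(U_i \cdot U_v) = U_i \uparrow v$ and $i^{-1}(U_v \cdot U_i) = v \uparrow U_i$ (and one notes in passing $U_i \uparrow v = u_i^{\wedge}(v)$, $U_i \downarrow v = u_i^{\vee}(v)$).

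Accordingly I would define, for arbitrary $U \in \mathcal{M}_0$ and $v \in W$, the operations $U \uparrow v := i^{-1}(U \cdot U_v)$ and $v \uparrow U := i^{-1}(U_v \cdot U)$. By the previous paragraph these agree with the given operations when $U = U_i$, and the identities $(UU') \uparrow v = U \uparrow (U' \uparrow v)$ and $v \uparrow (UU') = (v \uparrow U) \uparrow U'$ are nothing but the associativity of multiplication in $\mathcal{M}_0$, read off through $i$ and $i^{-1}$ (for the first: if $U_w = U' U_v$ then $U \uparrow w = i^{-1}\bigl(U(U'U_v)\bigr) = i^{-1}\bigl((UU')U_v\bigr)$). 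In other words the two ``$\uparrow$'' operations are precisely the left regular and right regular actions of $\mathcal{M}_0$, transported to $W$ via $i$.

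For the two ``$\downarrow$'' operations I would introduce the set-involutions $\theta, \theta'$ of $W$ given by $\theta(v) = v w_0$ and $\theta'(v) = w_0 v$. Using $\ell(w_0 x) = \ell(w_0) - \ell(x)$ one gets the sign flips $s_i v < v \iff s_i (v w_0) > v w_0$ and $v s_i < v \iff w_0 v s_i > w_0 v$, and these translate immediately into $U_i \downarrow v = \theta\bigl(U_i \uparrow \theta(v)\bigr)$ and $v \downarrow U_i = \theta'\bigl(\theta'(v) \uparrow U_i\bigr)$. I would then set $U \downarrow v := \theta\bigl(U \uparrow \theta(v)\bigr)$ and $v \downarrow U := \theta'\bigl(\theta'(v) \uparrow U\bigr)$ for general $U \in \mathcal{M}_0$; since $\theta^2 = \theta'^2 = \operatorname{id}$, these are the conjugates by $\theta$ (resp.\ $\theta'$) of the two regular actions just produced, hence again a left and a right action. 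Concretely $(UU') \downarrow v = \theta\bigl((UU') \uparrow \theta v\bigr) = \theta\bigl(U \uparrow (U' \uparrow \theta v)\bigr) = U \downarrow (U' \downarrow v)$, which is (\ref{upaction}) for ``$\downarrow$'', and the right-hand identity in (\ref{downaction}) is checked the same way.

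The only step that is not completely formal is the treatment of the ``$\downarrow$'' operations, that is, seeing them as a $w_0$-twist of the regular action. If one prefers to avoid that observation, the alternative is to check directly that, for fixed $i$, the self-map $v \mapsto U_i \downarrow v$ of $W$ is idempotent and that these maps satisfy the braid relations (the latter being exactly Lemma~\ref{monoidbraid} applied to $u_i^{\vee}$), and then to invoke the standard presentation of the $0$-Hecke monoid $\mathcal{M}_0$ by the relations $U_i^2 = U_i$ and the braid relations: its universal property yields a unique monoid homomorphism from $\mathcal{M}_0$ to the monoid of self-maps of $W$ under composition sending $U_i$ to $v \mapsto U_i \downarrow v$, which is the asserted left action, and likewise on the right. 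Either way, none of the intricate Bruhat-order combinatorics needed elsewhere in the paper enters here.
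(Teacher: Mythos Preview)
Your argument is correct. For the two $\uparrow$ operations you take a different route than the paper: instead of verifying that the generator maps $u_i^{\wedge}$ satisfy the defining relations of $\mathcal{M}_0$ (idempotency plus braid, via Lemma~\ref{monoidbraid}) as the paper does, you observe directly that $U_i\uparrow v$ and $v\uparrow U_i$ are the left and right regular actions of $\mathcal{M}_0$ transported through the bijection $i:W\to\mathcal{M}_0$, so associativity is tautological. This is slicker and is in fact the content of the paper's remark and equation~(\ref{demsymmetry}) immediately \emph{after} the Proposition; you are simply using that observation as the proof itself.

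For the $\downarrow$ operations your primary argument conjugates the regular action by $v\mapsto vw_0$ (resp.\ $v\mapsto w_0v$), which again anticipates what the paper records later in (\ref{worelation})--(\ref{downdemazure}). One caveat: the Proposition is placed before the paper specializes to finite $W$, so this $w_0$-based argument proves slightly less than the paper claims at that point. You clearly flag this, and your alternative---checking idempotency and the braid relations for $u_i^{\vee}$ via Lemma~\ref{monoidbraid} and invoking the presentation of $\mathcal{M}_0$---is exactly the paper's own proof and covers the general case.
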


\begin{proof}
The algebra $\mathcal{H}_0$ may be characterized as the algebra generated by
the $U_i$ subject to the quadratic relations $U_i^2 = U_i$ and the braid
relations. Taking $u_i = u_i^{\vee}$ (resp.~$u_i = u_i^{\wedge}$) these
relations are satisfied by the $u_i$ by Lemma~\ref{monoidbraid}, and so we
have a representation of $\mathcal{M}_0$ on $W$ satisfying (\ref{upaction}).
The proof of (\ref{downaction}) is similar.
\end{proof}

Since $w \mapsto U_w$ is a bijection $i : W \longrightarrow \mathcal{M}_0$,
the operations $\uparrow$ correspond to the left and right regular operations
of $\mathcal{M}_0$ on itself. That is, we can define $U_u \uparrow v = i^{- 1}
(U_u U_v) = u \uparrow U_v$ and it is clear that this operation has the
advertised properties. Moreover $i (U_u \uparrow v) = U_u U_v = i (u \uparrow
U_v)$, and therefore in terms of the Demazure multiplication,
\begin{equation}
\label{demsymmetry} U_u \uparrow v = u \uparrow U_v = u \circ v\,.
\end{equation}

For the rest of this section we specialize to the case that $W$ is a finite
Coxeter group. In this case $W$ has a longest element $w_0$, then $u \mapsto u
w_0$ is an order-reversing involution of $W$. From this it follows that
\begin{equation}
\label{worelation} 
w_0 (u \uparrow U_v) = (w_0 u) \downarrow U_v , \qquad 
(U_u \uparrow v) w_0 = U_u \downarrow (v w_0)
\end{equation}
Combining (\ref{demsymmetry}) and (\ref{worelation}) gives us formulas for
$\downarrow$ in terms of the Demazure multiplication:
\begin{equation}
\label{downdemazure} u \downarrow U_v = w_0 ((w_0 u) \circ v), \qquad U_u
\downarrow v = (u \circ (v w_0)) w_0
\end{equation}

Let $(i_1, \cdots, i_m)$ be a sequence of indices. We will say that the
sequence {\textit{contains}} $w \in W$ if $s_{j_1} \cdots s_{j_k}$ is a reduced
expression for $w$ for some subsequence $(j_1, \cdots, j_k)$ of $(i_1, \cdots,
i_m)$. Knutson and Miller~{\cite{KnutsonMillerSubword}} characterized this condition
in terms of the Demazure product $\circ$ on $W$ as follows:

\begin{proposition}
\label{KMLemma}The sequence $(i_1, \cdots, i_m)$ contains $w$ if and only if
$w \leqslant s_{i_1} \circ \cdots \circ s_{i_m}$.
\end{proposition}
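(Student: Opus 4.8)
The plan is to proceed by induction on $m$. Write $s=s_{i_1}$ and let $\rho=s_{i_2}\circ\cdots\circ s_{i_m}$ be the Demazure product of the tail, so that $s_{i_1}\circ\cdots\circ s_{i_m}=s\circ\rho$; recall from the definition of $\mathcal{H}_0$ (transported to $W$ via $i$) that $s\circ\rho=s\rho$ when $\ell(s\rho)>\ell(\rho)$ and $s\circ\rho=\rho$ otherwise. For $m=0$ both of the conditions being compared hold precisely when $w=1$, which starts the induction.

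For the inductive step the first task is to record a recursion for the relation ``contains''. A reduced subword $s_{i_{j_1}}\cdots s_{i_{j_k}}$ of $(i_1,\dots,i_m)$ representing $w$ either avoids the first position --- so that $(i_2,\dots,i_m)$ already contains $w$ --- or satisfies $j_1=1$, in which case $s_{i_{j_2}}\cdots s_{i_{j_k}}$ is a reduced word for $sw$ lying in $(i_2,\dots,i_m)$ and, having length $\ell(w)-1$, forces $\ell(sw)<\ell(w)$. Both passages are reversible (prepend $s$ to a reduced word for $sw$ when $\ell(sw)<\ell(w)$), so
\[
(i_1,\dots,i_m)\text{ contains }w\iff\big[(i_2,\dots,i_m)\text{ contains }w\big]\ \text{or}\ \big[\ell(sw)<\ell(w)\text{ and }(i_2,\dots,i_m)\text{ contains }sw\big].
\]
Applying the inductive hypothesis to the tail, the right-hand side becomes: $w\leqslant\rho$, or else $\ell(sw)<\ell(w)$ and $sw\leqslant\rho$.

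It then remains to check that this disjunction is equivalent to $w\leqslant s\circ\rho$, and this is the step needing the most care. I would do it with the lifting property of the Bruhat order (\cite[Proposition 2.2.7]{BjornerBrenti}), splitting on the signs of $\ell(s\rho)-\ell(\rho)$ and $\ell(sw)-\ell(w)$. If $\ell(s\rho)<\ell(\rho)$, then $s\circ\rho=\rho$ and $s$ is a left descent of $\rho$; since the interval $[1,\rho]$ is then stable under $x\mapsto sx$ (a standard consequence of the lifting property), the clause ``$\ell(sw)<\ell(w)$ and $sw\leqslant\rho$'' is already subsumed in ``$w\leqslant\rho$'', and both sides read $w\leqslant\rho$. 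If $\ell(s\rho)>\ell(\rho)$, then $s\circ\rho=s\rho$ has $s$ as a left descent, and the lifting property shows that $w\leqslant s\rho$ is equivalent to $sw\leqslant\rho$ when $\ell(sw)<\ell(w)$ and to $w\leqslant\rho$ when $\ell(sw)>\ell(w)$; comparing with the disjunction above (and noting that $w\leqslant\rho$ together with $\ell(sw)<\ell(w)$ already forces $sw\leqslant\rho$) closes the induction. The main obstacle is exactly this bookkeeping --- keeping the four lifting-property cases consistent. As an alternative that sidesteps the case analysis, one can appeal to the subword characterization of $\leqslant$ directly: $s_{i_1}\circ\cdots\circ s_{i_m}$ has a reduced expression that is a subword of $(i_1,\dots,i_m)$, and the Demazure product is monotone in each argument with respect to $\leqslant$, so the Demazure product along any subsequence lies in the Bruhat interval below $s_{i_1}\circ\cdots\circ s_{i_m}$; that route, however, requires first establishing the monotonicity of $\circ$.
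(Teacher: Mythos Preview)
Your inductive argument is correct: the recursion for ``contains'' is right, and the four-way case split using the lifting property cleanly shows that the disjunction $\bigl[w\leqslant\rho\bigr]$ or $\bigl[\ell(sw)<\ell(w)\text{ and }sw\leqslant\rho\bigr]$ is equivalent to $w\leqslant s\circ\rho$.

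The paper, however, does not prove this statement at all; it simply cites Lemma~3.4 of Knutson--Miller~\cite{KnutsonMillerSubword}. So your approach is different by virtue of actually supplying an argument. The induction you give is essentially the standard self-contained proof and is exactly what one would write to avoid the external reference. One remark on your suggested alternative route via monotonicity of $\circ$: in the logical flow of this paper that would be circular, since Proposition~\ref{monotonedemazure} (monotonicity of the Demazure product) is deduced \emph{from} the present proposition. You flagged that monotonicity would need to be established first, and indeed it would have to be proved independently for that shortcut to be available here.
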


\begin{proof}
See~{\cite{KnutsonMillerSubword}} Lemma~3.4.
\end{proof}

As a consequence we deduce a monotonicity property of $\circ$.

\begin{proposition}
\label{monotonedemazure}If $u, u', v, v' \in W$ and $u \leqslant u'$, $v
\leqslant v'$ then $u \circ v \leqslant u' \circ v'$.
\end{proposition}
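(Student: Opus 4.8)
The plan is to reduce the claim to the subword characterization of Proposition~\ref{KMLemma} and the fact that $\circ$ is computed by a fixed index sequence. First I would pick reduced expressions $u' = s_{i_1}\cdots s_{i_p}$ and $v' = s_{i_{p+1}}\cdots s_{i_{p+q}}$, and observe that $u' \circ v' = s_{i_1}\circ\cdots\circ s_{i_{p+q}}$, since for a reduced word the Demazure product agrees with the ordinary product, and $\circ$ is associative. Thus the sequence $(i_1,\dots,i_{p+q})$ contains $u'$ (take the first $p$ indices) and contains $v'$ (take the last $q$ indices); but more to the point, by Proposition~\ref{KMLemma} the set of elements contained in this sequence is exactly the Bruhat-down-closure of $u'\circ v'$.

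Next I would use $u \leqslant u'$ and $v \leqslant v'$ together with the subword property of the strong Bruhat order: since $u \leqslant u'$, some subword of $s_{i_1}\cdots s_{i_p}$ is a reduced word for $u$, so the sequence $(i_1,\dots,i_p)$ contains $u$; likewise $(i_{p+1},\dots,i_{p+q})$ contains $v$. Concatenating a reduced subword for $u$ drawn from the first block with a reduced subword for $v$ drawn from the second block produces a subword of $(i_1,\dots,i_{p+q})$. The resulting product $u v$ need not be reduced, but its Demazure product along that subsequence is $u\circ v$, and any further subword witnessing $u\circ v$ shows that the sequence $(i_1,\dots,i_{p+q})$ contains $u\circ v$. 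Here I would invoke the monoid/associativity machinery already set up: $u\circ v = s_{j_1}\circ\cdots\circ s_{j_k}$ for the chosen subsequence, and a reduced expression for $u\circ v$ is a subword of $(j_1,\dots,j_k)$, hence of $(i_1,\dots,i_{p+q})$.

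Finally, applying Proposition~\ref{KMLemma} in the forward direction: the sequence $(i_1,\dots,i_{p+q})$ contains $u\circ v$, so $u\circ v \leqslant s_{i_1}\circ\cdots\circ s_{i_{p+q}} = u'\circ v'$, which is the desired inequality.

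The main obstacle I anticipate is the middle step: being careful that ``a subword of the first block representing $u$, followed by a subword of the second block representing $v$'' genuinely forces the sequence to contain $u\circ v$ rather than merely the (possibly non-reduced) literal product $uv$. The cleanest way around this is to avoid reasoning with the non-reduced word $uv$ directly and instead argue in two moves — first that $(i_1,\dots,i_{p+q})$ contains $u'\circ v'$ with $u, v$ in the two halves replaced by $u', v'$ is trivial, and separately that containment is monotone under replacing a contained element by a Bruhat-smaller one, which is immediate from the subword definition of $\leqslant$. Combining these, $(i_1,\dots,i_{p+q})$ contains every $u\circ v$ with $u\leqslant u'$, $v\leqslant v'$, and we conclude as above. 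One could alternatively give a short direct induction on $\ell(u') - \ell(u) + \ell(v') - \ell(v)$ using that $U_i \uparrow (\cdot)$ and $(\cdot)\uparrow U_i$ are order-preserving on $W$ (each is either the identity or $x\mapsto s_i x$ restricted to a Bruhat interval), but the subword argument via Proposition~\ref{KMLemma} is more transparent and is the route I would write up.
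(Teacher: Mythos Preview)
Your proposal is correct and follows essentially the same route as the paper: choose reduced words for $u'$ and $v'$, extract reduced subwords for $u$ and $v$ via the subword property, and apply Proposition~\ref{KMLemma} to the concatenated sequences to conclude $u\circ v \leqslant u'\circ v'$. The paper phrases the key step slightly more economically---observing that $u\circ v$ is the maximal element contained in the shorter concatenated sequence, which is a subsequence of the longer one---but your justification via ``a reduced word for $u\circ v$ is a subword of $(j_1,\dots,j_k)$'' (itself an application of Proposition~\ref{KMLemma}) arrives at the same conclusion.
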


\begin{proof}
Let $(i_1', \cdots, i_{m'}')$ and $(j_1', \cdots, j_{n'}')$ be reduced words
for $u'$ and $v'$. Then we may find subwords $(i_1, \cdots, i_m)$ and $(j_1,
\cdots, j_n)$ that are reduced words for $u$ and $v$. By Proposition~2, $u'
\circ v'$ is the maximal element of $W$ that is contained in $(i_1', \cdots,
i_{m'}', j_1', \cdots, j_{n'}')$ and $u \circ v$ is the maximal element
contained in $(i_1, \cdots, i_m, j_1, \cdots, j_n)$. Since the second
sequence is a subsequence of the first, we obtain the stated monotonicity
property.
\end{proof}

From Proposition~\ref{monotonedemazure} we may infer two monotonicity
properties of the ``down'' actions of~$\mathcal{H}_0$.

\begin{proposition}
\label{downmonotone}If $x \geqslant u$ then $U_w \downarrow x \geqslant U_w
\downarrow u$ and $x \downarrow U_w \geqslant u \downarrow U_w$.
\end{proposition}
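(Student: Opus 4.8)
The plan is to reduce both assertions to the monotonicity of the Demazure product (Proposition~\ref{monotonedemazure}) via the explicit formulas~(\ref{downdemazure}), which express the ``down'' actions of $\mathcal{H}_0$ in terms of $\circ$ and the longest element $w_0$. The only thing requiring attention is the direction of inequalities, since each such formula applies the order-reversing involution $v \mapsto v w_0$ (or $v \mapsto w_0 v$) twice.

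For the first claim, use the right-hand identity of~(\ref{downdemazure}) to write $U_w \downarrow x = (w \circ (x w_0)) w_0$ and $U_w \downarrow u = (w \circ (u w_0)) w_0$. Since $x \geqslant u$ and $v \mapsto v w_0$ reverses the Bruhat order, $x w_0 \leqslant u w_0$. Applying Proposition~\ref{monotonedemazure} with the left factor held fixed at $w$ gives $w \circ (x w_0) \leqslant w \circ (u w_0)$, and multiplying on the right by $w_0$ reverses the order once more, so $(w \circ (x w_0)) w_0 \geqslant (w \circ (u w_0)) w_0$, that is, $U_w \downarrow x \geqslant U_w \downarrow u$. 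The second claim is entirely parallel, now using the left-hand identity $v \downarrow U_w = w_0((w_0 v)\circ w)$ of~(\ref{downdemazure}): from $x \geqslant u$ we get $w_0 x \leqslant w_0 u$ (left multiplication by $w_0$ is also order-reversing, which follows from the stated fact together with the invariance of the Bruhat order under inversion), then $(w_0 x)\circ w \leqslant (w_0 u)\circ w$ by Proposition~\ref{monotonedemazure}, and left multiplication by $w_0$ yields $w_0((w_0 x)\circ w) \geqslant w_0((w_0 u)\circ w)$, i.e.\ $x \downarrow U_w \geqslant u \downarrow U_w$.

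There is no substantive obstacle here beyond this sign bookkeeping: all of the combinatorial content is already packaged in Proposition~\ref{monotonedemazure} (which itself rests on the Knutson--Miller subword characterization, Proposition~\ref{KMLemma}), and the rest is the elementary observation that translation by $w_0$ on either side is an anti-automorphism of the Bruhat poset. One could instead argue directly from the subword description, but routing through~(\ref{downdemazure}) is the cleanest path and reuses machinery already in place.
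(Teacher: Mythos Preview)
Your proof is correct and follows exactly the same route as the paper: reduce to Proposition~\ref{monotonedemazure} via the formulas~(\ref{downdemazure}), using twice that multiplication by $w_0$ reverses the Bruhat order. The paper handles the second inequality with a one-line ``proved the same way,'' but your spelled-out version is the intended argument.
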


\begin{proof}
Using (\ref{downdemazure}) the inequality that we need for $U_w \downarrow x
\geqslant U_w \downarrow u$ is
\begin{equation}
\label{tmpineq} (w \circ (x w_0)) w_0 \geqslant (w \circ (u w_0)) w_0 .
\end{equation}
We have $x w_0 \leqslant u w_0$ and so by Proposition~\ref{monotonedemazure}
we have $w \circ (x w_0) \leqslant w \circ (u w_0)$. The inequality
(\ref{tmpineq}) follows. The second inequality is proved the same way.
\end{proof}

\begin{proposition}
\label{downmonotonebis}If $w \geqslant v$ then $U_w \downarrow x \leqslant
U_v \downarrow x$ and $x \downarrow U_w \leqslant x \downarrow U_v$ for any $w \in W$.
\end{proposition}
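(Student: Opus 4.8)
The plan is to reduce Proposition~\ref{downmonotonebis} to the monotonicity property of the Demazure multiplication, exactly as in the proof of Proposition~\ref{downmonotone}, but exploiting monotonicity in the \emph{first} argument of $\circ$ instead of the second. Using the formula (\ref{downdemazure}), namely $U_w \downarrow x = (w \circ (x w_0)) w_0$, the inequality $U_w \downarrow x \leqslant U_v \downarrow x$ that we must establish becomes
\[
(w \circ (x w_0)) w_0 \leqslant (v \circ (x w_0)) w_0 .
\]
Since $u \mapsto u w_0$ is order-reversing, this is equivalent to $w \circ (x w_0) \geqslant v \circ (x w_0)$, and since by hypothesis $w \geqslant v$, this follows immediately from Proposition~\ref{monotonedemazure} applied with second argument $x w_0$ fixed. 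The second inequality, $x \downarrow U_w \leqslant x \downarrow U_v$, is handled identically using the companion formula $x \downarrow U_w = w_0((w_0 x) \circ w)$ from (\ref{downdemazure}): from $w \geqslant v$ we get $(w_0 x) \circ w \geqslant (w_0 x) \circ v$ by Proposition~\ref{monotonedemazure}, and applying the order-reversing $u \mapsto w_0 u$ gives the claim.

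I should double-check the direction of the order reversal at each step, since that is the only place an error could creep in: both $u \mapsto u w_0$ and $u \mapsto w_0 u$ are order-reversing involutions on a finite Coxeter group, so each application flips the inequality exactly once, and the two applications (one to pass from $U_w \downarrow x$ to a Demazure product, and implicitly the reversal built into the hypothesis $w \geqslant v$) combine to give the stated direction $U_w \downarrow x \leqslant U_v \downarrow x$. A sanity check in a small example — say $W = S_2$ with $w = s_1 > v = 1$ and $x = 1$ — gives $U_{s_1} \downarrow 1 = 1$ (since $s_1 \cdot 1 > 1$, the ``down'' operator does nothing) and $U_1 \downarrow 1 = 1$, so $1 \leqslant 1$, consistent; taking $x = s_1$ gives $U_{s_1}\downarrow s_1 = s_1 \cdot s_1^{-1}$... more carefully $U_{s_1} \downarrow s_1 = s_1$ wait — $s_1 \cdot s_1 = 1 < s_1$ so $U_{s_1} \downarrow s_1 = s_1$, and $U_1 \downarrow s_1 = s_1$, again consistent. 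The monotonicity genuinely uses the finiteness of $W$ only through the existence of $w_0$, which is already assumed in this part of Section~\ref{sec:bruhat}.

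There is essentially no obstacle here: the proposition is a formal consequence of Proposition~\ref{monotonedemazure} together with the translation formulas (\ref{downdemazure}). The only mild care needed is bookkeeping of which argument of $\circ$ is varying — in Proposition~\ref{downmonotone} the \emph{second} argument $xw_0$ (resp.\ $uw_0$) varies while $w$ is fixed, whereas here the \emph{first} argument $w$ (resp.\ $v$) varies while $xw_0$ is fixed — but Proposition~\ref{monotonedemazure} is symmetric in this respect, so both cases are covered. I will therefore present the proof as a short two-line deduction, writing out only the inequality chain for the first assertion and remarking that the second is proved the same way with the roles of left and right interchanged.

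\begin{proof}
By (\ref{downdemazure}) we have $U_w \downarrow x = (w \circ (x w_0)) w_0$ and $U_v \downarrow x = (v \circ (x w_0)) w_0$. Since $w \geqslant v$, Proposition~\ref{monotonedemazure} gives $w \circ (x w_0) \geqslant v \circ (x w_0)$. Applying the order-reversing involution $u \mapsto u w_0$ yields $(w \circ (x w_0)) w_0 \leqslant (v \circ (x w_0)) w_0$, that is, $U_w \downarrow x \leqslant U_v \downarrow x$. For the second inequality, (\ref{downdemazure}) gives $x \downarrow U_w = w_0 ((w_0 x) \circ w)$ and $x \downarrow U_v = w_0 ((w_0 x) \circ v)$; from $w \geqslant v$ and Proposition~\ref{monotonedemazure} we get $(w_0 x) \circ w \geqslant (w_0 x) \circ v$, and applying the order-reversing involution $u \mapsto w_0 u$ gives $x \downarrow U_w \leqslant x \downarrow U_v$.
\end{proof}
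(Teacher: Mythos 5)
Your proof is correct and is essentially the same as the paper's: both apply Proposition~\ref{monotonedemazure} in the first argument of $\circ$ and then transport through the order-reversing map given by $w_0$ via the translation formulas~(\ref{downdemazure}). (A small slip only in your informal sanity check, not in the proof: $U_{s_1}\downarrow s_1 = s_1 s_1 = 1$, not $s_1$, since $s_1\cdot s_1 < s_1$ triggers the second branch of the definition; the conclusion $1\leqslant s_1 = U_1\downarrow s_1$ is still consistent, and the formal proof is unaffected.)
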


\begin{proof}
By Proposition~\ref{monotonedemazure} we have $w \circ x w_0 \geqslant v
\circ x w_0$ and so $(w \circ x w_0) w_0 \leqslant (v \circ xw_0) w_0$. The
first inequality now follows from (\ref{downdemazure}), and the other is
proved the same way.
\end{proof}

If $u \leqslant v$ in the Bruhat order let $[u, v]$ denote the Bruhat interval
$\{ x \in W|u \leqslant x \leqslant v \}$.

\begin{theorem}
  \label{translatemin}The translated Bruhat interval $u [1, v]$ has maximal
  and minimal elements $u \circ v$ and $u \downarrow U_v$. Similarly $[1, u]
  v$ has maximal and minimal elements $u \circ v$ and $U_u \downarrow v$.
  Moreover $[u, w_0] v$ has minimal element $u \downarrow U_v$ and $u [v,
  w_0]$ has minimal element $U_u \downarrow v$.
\end{theorem}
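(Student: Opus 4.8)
The plan is to establish the four claims of Theorem~\ref{translatemin} by reducing each to a statement about the Demazure product together with the monotonicity of $\circ$ (Proposition~\ref{monotonedemazure}) and the order-reversing involution $u\mapsto uw_0$. First I would treat $u[1,v]$. Since left multiplication by $u$ is a bijection of $W$, the translated interval $u[1,v]$ is simply $\{ux : 1\leqslant x\leqslant v\}$; the issue is only to identify its extremes in the Bruhat order, since left translation is \emph{not} order-preserving. For the maximal element: by Proposition~\ref{KMLemma}, if $(i_1,\dots,i_m)$ is a reduced word for $u$ and $(j_1,\dots,j_n)$ a reduced word for $v$, then $u\circ v$ is the maximal element of $W$ contained in the concatenated word $(i_1,\dots,i_m,j_1,\dots,j_n)$. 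Every $ux$ with $x\leqslant v$ is contained in that concatenated word (pick a reduced subword for $u$ from the first block and one for $x$ from the second block, using $x\leqslant v$), so $ux\leqslant u\circ v$; and $u\circ v$ itself lies in $u[1,v]$ because $u\circ v = u\cdot(u^{-1}(u\circ v))$ and the right factor is $\leqslant v$ by the defining word characterization of $\circ$. For the minimal element: I would invoke (\ref{downdemazure}), which gives $u\downarrow U_v = w_0((w_0u)\circ v)$. By the already-established maximal-element statement applied to $(w_0u)[1,v]$, the element $(w_0u)\circ v$ is the \emph{maximal} element of $(w_0u)[1,v] = w_0(u[1,v])$; applying the order-reversing involution $w_0$ turns this into the minimal element of $u[1,v]$, which is exactly $w_0((w_0u)\circ v) = u\downarrow U_v$.

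Next I would handle $[1,u]v$ by the mirror-image argument, using right translation instead of left: its maximal element is $u\circ v$ by the same subword reasoning (now the block for $u$ is the variable one), and its minimal element is $U_u\downarrow v = (u\circ(vw_0))w_0$ via the second formula in (\ref{downdemazure}), again by applying the order-reversing involution on the right to the maximal-element statement for $[1,u](vw_0)$.

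Finally, for $[u,w_0]v$ and $u[v,w_0]$: these are the ``upper'' translated intervals, and only their minimal elements are asserted. I would write $[u,w_0] = \{x : x\geqslant u\}$, so $[u,w_0]v = \{xv : x\geqslant u\}$. The claim is that the Bruhat-minimal such element is $u\downarrow U_v$. Monotonicity of the down-action in the lower variable, Proposition~\ref{downmonotone}, gives $x\downarrow U_v\geqslant u\downarrow U_v$ whenever $x\geqslant u$; but I need to relate $xv$ to $x\downarrow U_v$. The cleanest route: for fixed $x$, the element $xv$ lies in $[1,x']v$ for any $x'\geqslant x$ with $x'$ large enough that $v\leqslant x'^{-1}(x'\circ\cdots)$ — more simply, observe $xv\in x[1,v]$ trivially when $v\leqslant v$, hence $xv\geqslant x\downarrow U_v\geqslant u\downarrow U_v$ by the first part and then Proposition~\ref{downmonotone}. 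So every element of $[u,w_0]v$ is $\geqslant u\downarrow U_v$. It remains to check $u\downarrow U_v$ actually belongs to $[u,w_0]v$, i.e.\ that $(u\downarrow U_v)v^{-1}\geqslant u$; this is where I expect the main obstacle, and I would verify it by writing $u\downarrow U_v = w_0((w_0u)\circ v)$ and using that $(w_0u)\circ v\leqslant (w_0u)\circ v$ while $((w_0u)\circ v)v^{-1}$ — unwinding via the word characterization of $\circ$ — equals an element $\leqslant w_0u$, so that its $w_0$-translate is $\geqslant u$. The case $u[v,w_0]$ with minimal element $U_u\downarrow v$ is symmetric, using the right-hand formula in (\ref{downdemazure}) and Proposition~\ref{downmonotone}'s second inequality.

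The structure of the whole proof is thus: (1) prove the maximal-element claims by the Knutson--Miller subword criterion; (2) deduce the minimal-element claims for the ``lower'' intervals $u[1,v]$ and $[1,u]v$ by transporting through the order-reversing involution via (\ref{downdemazure}); (3) deduce the minimal-element claims for the ``upper'' intervals $[u,w_0]v$ and $u[v,w_0]$ by combining step~(2) with the monotonicity Proposition~\ref{downmonotone}, the only delicate point being membership of the candidate minimal element in the translated interval. I expect step~(3)'s membership check to be the real content; everything else is bookkeeping with $\circ$ and $w_0$.
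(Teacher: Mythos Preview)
Your steps (1) and (2) are essentially the paper's own argument: the maximal element of $u[1,v]$ is identified via the Knutson--Miller subword criterion (the paper adds a short induction on $\ell(v)$ to confirm $u\circ v\in u[1,v]$, where your ``word characterization'' sketch does the same job), and the minimal element is then obtained by applying the order-reversing left $w_0$-involution and invoking (\ref{downdemazure}). The mirror case $[1,u]v$ is handled symmetrically in both.

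Where you diverge is step (3). Your route---bounding $xv$ below by $x\downarrow U_v$ via the already-proved minimum of $x[1,v]$, then by $u\downarrow U_v$ via Proposition~\ref{downmonotone}, and finally checking membership $(u\downarrow U_v)v^{-1}\geqslant u$---is correct, and your membership argument (reducing to $((w_0u)\circ v)v^{-1}\leqslant w_0u$, which follows since $(w_0u)\circ v\in[1,w_0u]v$) goes through. But the paper's approach is shorter and sidesteps the membership issue entirely: it simply applies the $w_0$-involution once more. For instance, right multiplication by $w_0$ carries $[v,w_0]$ bijectively and order-reversingly to $[1,vw_0]$, so
\[
\min\bigl(u[v,w_0]\bigr)=\bigl(\max(u[1,vw_0])\bigr)w_0=(u\circ(vw_0))w_0=U_u\downarrow v,
\]
with membership automatic because the maximum of $u[1,vw_0]$ is already known to lie in that set. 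The case $[u,w_0]v$ is the left-right mirror, using $[u,w_0]=w_0[1,w_0u]$. So what you flagged as ``the real content'' dissolves under the paper's reduction; your approach trades a one-line involution trick for an appeal to Proposition~\ref{downmonotone} plus a separate verification.
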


\begin{proof}
  Let us start by showing that $u\circ v\in u[1,v]$.
  This may be proved by induction on $\ell (v)$: we write $v =
  v' s_i$ where $\ell (v') = \ell (v) - 1$. By induction $u \circ v' \in u [1,
  v']$. Now $u \circ v$ is either $u \circ v'$ or $(u \circ v') s$ and in
  either case $u \circ v \in u [1, v]$.

  We have shown that $u\circ v\in u[1,v]$, but we need to show that it
  is the maximal element. 
  Let us pick reduced words $(i_1, \cdots, i_k)$ and $(j_1, \cdots, j_l)$ for
  $u$ and $v$, respectively. If $w \in u [1, v]$ then we may write $w =
  s_{i_1} \cdots s_{i_k} s_{j_1'} \cdots s_{j_m'}$ where $(j_1', \cdots,
  j_m')$ is a subsequence of $(j_1, \cdots, j_l)$ and so the sequence $(i_1,
  \cdots, i_k, j_1, \cdots, j_l)$ contains $w$. By Proposition~\ref{KMLemma}
  we have
  \[ w \leqslant s_{i_1} \circ \cdots \circ s_{i_k} \circ s_{j_1} \circ \cdots
     \circ s_{j_l} = u \circ v, \]
  where we have used the fact that $(i_1, \cdots, i_k)$ and $(j_1, \cdots,
  j_l)$ are reduced words, so $u = s_{i_1} \circ \cdots \circ s_{i_k}$ and $v
  = s_{j_1} \circ \cdots \circ s_{j_l}$. 
  This proves that $u [1, v]$ has the unique maximal element $u \circ v$.
  
  Now we make use of the fact that $u \mapsto w_0 u$ is an order reversing
  bijection of $W$ to deduce that the Bruhat interval $w_0 u [1, v]$ has a
  minimal element $w_0  (u \circ v)$. Replacing $u$ by $w_0 u$ and using
  (\ref{downdemazure}) we see that $u [1, v]$ has a minimal element $w_0 
  ((w_0 u) \circ v) = u \downarrow U_v$.
  
  Now let us look at $u [v, w_0]$. Since right multiplication by $w_0$ is
  order reversing and maps $[v, w_0]$ to $[1, v w_0]$ we have
  \[ \min (u [v, w_0]) = (\max (u [1, v w_0])) w_0 = (u \circ (v w_0)) w_0 = 
  U_u \downarrow v . \]

  The remaining statements about $[1, u] v$ and $[u, w_0] v$ are the mirror
  images of cases already proved and may be established the same way.
\end{proof}

\begin{lemma}
\label{lemsix}Suppose that $u' \leqslant_R u$ and $w \in W$. Then $u'
\downarrow U_w \leqslant_R u$.
\end{lemma}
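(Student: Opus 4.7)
The plan is to reduce the assertion to the simpler claim that $u' \downarrow U_w \leqslant_R u'$: since $\leqslant_R$ is a partial order and $u' \leqslant_R u$ by hypothesis, transitivity will then yield $u' \downarrow U_w \leqslant_R u$.

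To prove $u' \downarrow U_w \leqslant_R u'$, I would induct on $\ell(w)$. Pick a reduced expression $w = s_{i_1} \cdots s_{i_k}$; then $U_w = U_{i_1} \cdots U_{i_k}$ in $\mathcal{H}_0$ and the prefix $s_{i_1} \cdots s_{i_{k-1}}$ is also reduced. By the right action property (\ref{downaction}),
\[ u' \downarrow U_w = v \downarrow U_{i_k}, \qquad \text{where } v := u' \downarrow U_{s_{i_1} \cdots s_{i_{k-1}}}. \]
The inductive hypothesis gives $v \leqslant_R u'$, so it suffices to verify the single-generator step $v \downarrow U_{i_k} \leqslant_R v$. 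This follows directly from the definition: either $vs_{i_k} > v$, in which case $v \downarrow U_{i_k} = v$, or $vs_{i_k} < v$, in which case $v \downarrow U_{i_k} = vs_{i_k}$. In the latter case $s_{i_k}$ is a right descent of $v$, so some reduced expression for $v$ ends in $s_{i_k}$; its truncation exhibits $vs_{i_k}$ as an initial segment of a reduced word for $v$, hence $vs_{i_k} \leqslant_R v$.

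There is no substantial obstacle here: the argument is a routine induction whose essence is the observation that each elementary down step $v \mapsto v \downarrow U_i$ either fixes $v$ or replaces it by the element obtained by removing a right descent, which is always smaller in the weak order. The hypothesis $u' \leqslant_R u$ enters only at the very end, through the transitivity of $\leqslant_R$.
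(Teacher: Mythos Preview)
Your proof is correct and follows essentially the same approach as the paper: reduce to the simple-reflection case by induction on $\ell(w)$ and observe that removing a right descent yields an element smaller in $\leqslant_R$. The only difference is organizational: you first establish $u' \downarrow U_w \leqslant_R u'$ (which the paper records separately as Lemma~\ref{lemseven}) and then invoke transitivity, whereas the paper carries $u$ through the induction and afterward deduces Lemma~\ref{lemseven} as the special case $u'=u$.
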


\begin{proof}
Suppose we know this in the case where $w = s$ is a simple reflection. Then
writing $w = s_{i_1} \cdots s_{i_k}$ we have $u' \downarrow U_w = u'
\downarrow U_{s_i} \cdots U_{s_k}$ and so applying the case of a simple
reflection repeatedly we get $u' \downarrow U_w \leqslant u$.

Thus we are reduced to the case where $w = s$ is a simple reflection. If $u'
s > u'$ then $u' \downarrow U_s = u'$ in which case we are done. Therefore
we may assume that $u' s < s$. Then we may choose a reduced word for $u'$ of
the form $u' = s_{i_1} \cdots s_{i_r}$ where $s_{i_r} = s$. Let $s_{j_1}
\cdots s_{j_k}$ be a reduced word for $(u')^{- 1} u$. Then since $u'
\leqslant_R u$ the expression $s_{i_1} \cdots s_{i_r} s_{j_1} \cdots s_{j_k}$
is a reduced expression for $u$. So
$u' \downarrow U_s = u' s = s_{i_1} \cdots s_{i_{r - 1}} \leqslant_R u$.
\end{proof}

\begin{lemma}
\label{lemseven}Let $u, v \in W$. Then $u \downarrow U_v \leqslant_R u$.
\end{lemma}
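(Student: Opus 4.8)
The plan is to read this off immediately from Lemma~\ref{lemsix}. Every group element lies below itself in the weak order, since $\ell(u)+\ell(u^{-1}u)=\ell(u)+\ell(1)=\ell(u)$, so $u\leqslant_R u$ always holds. Applying Lemma~\ref{lemsix} with $u'=u$ and $w=v$ then gives $u\downarrow U_v\leqslant_R u$, which is exactly the assertion. So once Lemma~\ref{lemsix} is in hand there is genuinely nothing further to do; the statement is recorded separately only because this is the shape in which it will be invoked later, and because it is cleaner to quote ``$u\downarrow U_v\leqslant_R u$'' than the more general two-variable version.

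For completeness I would note that a self-contained argument is also available, at essentially the cost of reproving Lemma~\ref{lemsix}. Writing a reduced word $v=s_{i_1}\cdots s_{i_k}$ and using the right-action property~(\ref{downaction}), one has $u\downarrow U_v=(\cdots(u\downarrow U_{s_{i_1}})\cdots)\downarrow U_{s_{i_k}}$, so it suffices to control one step: if $us>u$ then $u\downarrow U_s=u\leqslant_R u$, and if $us<u$ then $u\downarrow U_s=us$, and $\ell(us)+\ell(s)=\ell(u)$ shows $us\leqslant_R u$. The only subtlety — and the reason one cannot naively iterate this one-step claim — is that after the first step one is dealing with an element $u'\leqslant_R u$ rather than with $u$ itself, so one needs ``$u'\leqslant_R u$ and $u's<u'$ imply $u's\leqslant_R u$'', which is precisely the inductive content already packaged in Lemma~\ref{lemsix}. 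Thus there is no real obstacle here: the efficient route is simply the one-line specialization $u'=u$ of Lemma~\ref{lemsix}, and I would present it that way.
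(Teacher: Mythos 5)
Your proposal is correct and matches the paper's proof exactly: the paper likewise states that Lemma~\ref{lemseven} is the special case $u'=u$ of Lemma~\ref{lemsix}. The additional self-contained sketch you offer is a reasonable gloss but is not needed.
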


\begin{proof}
This is the special case $u' = u$ of Lemma~\ref{lemsix}.
\end{proof}

\begin{lemma}
\label{lemeight}If $x \leqslant_R u^{- 1}$ then $u x \leqslant_R u$.
\end{lemma}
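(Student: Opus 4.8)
The plan is to argue directly from the length-function characterization of the weak order recalled above, namely that $a \leqslant_R b$ if and only if $\ell(a) + \ell(a^{-1}b) = \ell(b)$. I would first unpack both the hypothesis and the conclusion in these terms. The hypothesis $x \leqslant_R u^{-1}$ says $\ell(x) + \ell(x^{-1}u^{-1}) = \ell(u^{-1})$, while the desired conclusion $ux \leqslant_R u$ says $\ell(ux) + \ell\bigl((ux)^{-1}u\bigr) = \ell(u)$. Since $(ux)^{-1}u = x^{-1}u^{-1}u = x^{-1}$ and $\ell$ is invariant under inversion, the conclusion is equivalent to the single equation $\ell(ux) + \ell(x) = \ell(u)$.

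The key observation is then that $x^{-1}u^{-1} = (ux)^{-1}$, so $\ell(x^{-1}u^{-1}) = \ell(ux)$, and of course $\ell(u^{-1}) = \ell(u)$. Substituting these two identities into the unpacked hypothesis turns it into exactly $\ell(x) + \ell(ux) = \ell(u)$, which is the reformulated conclusion. Hence $ux \leqslant_R u$, and the lemma is proved.

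Alternatively, and perhaps more in keeping with the reduced-word arguments used elsewhere in this section, one can phrase it combinatorially: write $u^{-1} = xy$ with $\ell(u^{-1}) = \ell(x) + \ell(y)$, so that $u = y^{-1}x^{-1}$ is a reduced factorization; then $ux = y^{-1}$ appears as an initial segment of this reduced expression for $u$, whence $ux \leqslant_R u$. Either route is a short bookkeeping argument, so there is no real obstacle here; the only point requiring care is tracking the inverses correctly, in particular the identity $(ux)^{-1}u = x^{-1}$, which is precisely what places the left translate $ux$ below $u$ in the weak order rather than some conjugate of $x$.
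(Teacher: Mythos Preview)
Your proof is correct and is essentially identical to the paper's own argument: the paper also unpacks the hypothesis as $\ell(u^{-1}) = \ell(x) + \ell(x^{-1}u^{-1})$, the conclusion as $\ell(u) = \ell(ux) + \ell((ux)^{-1}u)$, and observes that $\ell(u)=\ell(u^{-1})$, $\ell((ux)^{-1}u)=\ell(x)$, and $\ell(x^{-1}u^{-1})=\ell(ux)$ make these the same equation. Your alternative reduced-word phrasing is a harmless gloss on the same idea.
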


\begin{proof}
We have $\ell (u^{- 1}) = \ell (x) + \ell (x^{- 1} u^{- 1})$. We want to
deduce $\ell (u) = \ell (u x) + \ell ((u x)^{- 1} u)$. But $\ell (u) = \ell
(u^{- 1})$, $\ell ((u x)^{- 1} u) = \ell (x)$ and $\ell (x^{- 1} u^{- 1}) =
\ell(u x)$, so this is clear.
\end{proof}

\begin{lemma}
\label{lemnine}If $u, w \in W$ then $u (u^{- 1} \downarrow U_w) \leqslant
w$.
\end{lemma}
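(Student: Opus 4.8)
The plan is to reduce the statement to Theorem~\ref{translatemin} using the formula (\ref{downdemazure}) that expresses the operation $\downarrow$ in terms of the Demazure product $\circ$. Substituting $u\mapsto u^{-1}$ and $v\mapsto w$ into the first identity of (\ref{downdemazure}) gives
\[ u^{-1}\downarrow U_w = w_0\bigl((w_0 u^{-1})\circ w\bigr). \]
I would then set $a = w_0 u^{-1}$, so that $a^{-1} = u w_0$, and multiply on the left by $u$:
\[ u\,(u^{-1}\downarrow U_w) \;=\; (u w_0)\bigl((w_0 u^{-1})\circ w\bigr) \;=\; a^{-1}(a\circ w). \]

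It then remains to show $a^{-1}(a\circ w)\leqslant w$. By Theorem~\ref{translatemin}, the translated Bruhat interval $a[1,w]=\{ax\mid x\leqslant w\}$ has maximal element $a\circ w$; in particular $a\circ w\in a[1,w]$, so $a\circ w = ax$ for some $x\leqslant w$. Hence
\[ u\,(u^{-1}\downarrow U_w) = a^{-1}(a\circ w) = x \leqslant w, \]
as required.

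I do not expect any real obstacle here: the content is entirely in Theorem~\ref{translatemin}, and in fact only its easy half (that $a\circ w$ belongs to $a[1,w]$, proved there by induction on $\ell(w)$) is needed. The one point requiring care is keeping track of the $w_0$-twists relating $\downarrow$ to $\circ$, and in particular choosing the substitution $a=w_0u^{-1}$ so that the leftover factor $uw_0$ collapses to $a^{-1}$ and the group elements cancel cleanly.
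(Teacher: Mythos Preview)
Your proof is correct. It is, however, a genuinely different route from the paper's. The paper argues directly from the definition of $\downarrow$: writing $w=s_{i_1}\cdots s_{i_k}$ reduced, each step $\downarrow U_{s_{i_j}}$ either right-multiplies by $s_{i_j}$ or does nothing, so $u^{-1}\downarrow U_w = u^{-1}w'$ with $w'$ a subword of $w$, whence $u(u^{-1}\downarrow U_w)=w'\leqslant w$. Your argument instead unwinds $\downarrow$ through the $w_0$-twist formula (\ref{downdemazure}) and appeals to the (easy half of) Theorem~\ref{translatemin}. Both are short; the paper's version is slightly more self-contained in that it does not pass through $w_0$ and so would work verbatim for infinite Coxeter groups, whereas your detour through (\ref{downdemazure}) formally uses finiteness. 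On the other hand, your approach nicely illustrates that Lemma~\ref{lemnine} is really just the membership statement $a\circ w\in a[1,w]$ in disguise, and you correctly note that only that easy half of Theorem~\ref{translatemin} is needed.
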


\begin{proof}
Let $w = s_{i_1} \cdots s_{i_k}$ be a reduced expression. It is clear from
the definition of $u^{- 1} \downarrow U_w = u^{- 1} \downarrow U_{s_{i_1}}
\downarrow \cdots \downarrow U_{s_{i_k}}$ that $u^{- 1} \downarrow U_w =
u^{- 1} w'$ where $w' = s_{j_1} \cdots s_{j_k}$ and $(j_1, \cdots, j_k)$ is
a subsequence of $(i_1, \cdots, i_k)$. Then $u (u^{- 1} \downarrow U_w) = w'
\leqslant w$.
\end{proof}

\begin{proposition}
\label{weaklifting}Suppose that $w \leqslant_R w z$ and $w \leqslant_R w v$.
Moreover assume that $w z \leqslant w v$. Then $z \leqslant v$.
\end{proposition}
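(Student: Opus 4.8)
The plan is to argue by induction on $\ell(w)$. If $w=1$ then the hypothesis $wz\leqslant wv$ says exactly $z\leqslant v$, so we may assume $\ell(w)\geqslant 1$ and pick a simple reflection $s$ with $sw<w$; put $w'=sw$, so that $\ell(w')=\ell(w)-1$ and $w=sw'$. The proof then reduces to checking that the triple $(w',z,v)$ again satisfies the three hypotheses of the proposition, so that the induction hypothesis applies to it.

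The first step is to push the two weak-order hypotheses down from $w$ to $w'$. Since $w\leqslant_R wz$ we have $\ell(wz)=\ell(w)+\ell(z)$. Now $wz=s(w'z)$, so $\ell(w'z)$ and $\ell(wz)$ differ by $1$, while on the other hand $\ell(w'z)\leqslant\ell(w')+\ell(z)=\ell(wz)-1$ and $\ell(w'z)\geqslant\ell(wz)-1$. Hence $\ell(w'z)=\ell(w')+\ell(z)$, i.e.\ $w'\leqslant_R w'z$, and the same reasoning gives $w'\leqslant_R w'v$. In particular $\ell(w'z)=\ell(wz)-1$, which since $w'z=s(wz)$ means $s(wz)<wz$, and likewise $s(wv)<wv$: thus $s$ is a left descent of both $wz$ and $wv$.

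Next I would invoke the Lifting Property of the Bruhat order \cite{BjornerBrenti}, in the form: if $a\leqslant b$ and $s$ is a left descent of both $a$ and $b$, then $sa\leqslant sb$. Applying this with $a=wz\leqslant wv=b$ yields $w'z=s(wz)\leqslant s(wv)=w'v$. Therefore $(w',z,v)$ satisfies all the hypotheses of the proposition, now with $\ell(w')<\ell(w)$, and the induction hypothesis gives $z\leqslant v$.

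The only delicate point is the length bookkeeping of the middle paragraph, which is precisely what keeps the inductive hypothesis in force after $w$ is replaced by the simple left factor $w'$; once that is done, the Lifting Property does the rest. (Equivalently one could perform this computation along an entire reduced word for $w$ and apply the Lifting Property $\ell(w)$ times, but the induction on $\ell(w)$ is cleaner to write. Also, the form of the Lifting Property used here, with $s$ a descent of both elements, follows from the more commonly quoted form: when $sa<a\leqslant b$ one has $sa<b$, and the usual statement applied to $sa<b$ with the descent $s$ of $b$ gives $sa\leqslant sb$.)
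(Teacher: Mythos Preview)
Your proof is correct and follows essentially the same route as the paper's: induction on $\ell(w)$, peel off a left descent $s$ of $w$, observe that $s$ is then a left descent of $wz$ and $wv$, and apply the lifting property to pass from $wz\leqslant wv$ to $w'z\leqslant w'v$. The only difference is cosmetic: you spell out the length bookkeeping showing $w'\leqslant_R w'z$ and $s(wz)<wz$ explicitly, whereas the paper asserts this in one line (``since $w\leqslant_R wz$, $s$ is also a left descent of $wz$'').
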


\begin{proof}
We argue by induction on $\ell (w)$. If $w = 1$ this is certainly true.
Otherwise, let $s$ be a left descent of $w$. Since $w \leqslant w z$ it is
also a left descent of $w z$ and similarly of $w v$. Thus we may write $w =
s w'$ and we have $w' z < s w' z$ and similarly $w' v < s w' v$. Moreover we
are assuming that $s w' z \leqslant s w' v$. Using the lifting property of
the Bruhat order (Proposition~2.2.7 in {\cite{BjornerBrenti}}), it follows
that $w' z \leqslant w' v$. By induction we have $z \leqslant w$.
\end{proof}

It is well-known that left multiplication by $w_0$ is order reversing for the
Bruhat order. The following result is a generalization, showing that under
certain circumstances the Bruhat order is reversed by an arbitrary element
$u$.

\begin{proposition}
\label{conditionalreversal}Suppose that $x \leqslant_R u^{- 1}$ and $y
\leqslant_R u^{- 1}$. Moreover assume that $x \leqslant y$. Then $u x
\geqslant u y.$
\end{proposition}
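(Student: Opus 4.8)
The plan is to eliminate $u$ from the statement and then argue by induction on length, the crux being a careful choice of which descent to peel off. Put $w = u^{-1}$; the hypotheses become $x \leqslant_R w$, $y \leqslant_R w$ and $x \leqslant y$. Since $x \leqslant_R w$ means $w = x\,(x^{-1}w)$ with $\ell(w) = \ell(x) + \ell(x^{-1}w)$, we have $ux = w^{-1}x = (x^{-1}w)^{-1}$, and likewise $uy = (y^{-1}w)^{-1}$. As inversion is an automorphism of the Bruhat order, it is enough to prove the following: \emph{if $x \leqslant_R w$, $y \leqslant_R w$ and $x \leqslant y$, then $x^{-1}w \geqslant y^{-1}w$.}

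I would prove this by induction on $\ell(w)$. If $y = 1$ (in particular if $\ell(w) = 0$), then $x \leqslant y$ forces $x = 1$ and both sides equal $w$. Now assume $y \neq 1$ and choose a left descent $s$ of $y$. Since $y \leqslant_R w$, every left descent of $y$ is a left descent of $w$ (immediate from $\ell(w) = \ell(y) + \ell(y^{-1}w)$), so $\ell(sw) = \ell(w) - 1$; and from $sy < y$ one gets $sw = (sy)(y^{-1}w)$ with lengths adding, whence $sy \leqslant_R sw$ and $(sy)^{-1}(sw) = y^{-1}w$.

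There are two cases. If $s$ is also a left descent of $x$, then likewise $sw = (sx)(x^{-1}w)$ with lengths adding, so $sx \leqslant_R sw$ and $(sx)^{-1}(sw) = x^{-1}w$; and the lifting property (Proposition~2.2.7 in {\cite{BjornerBrenti}}) upgrades $x \leqslant y$ to $sx \leqslant sy$. Applying the inductive hypothesis to the triple $(sw; sx, sy)$ then gives $x^{-1}w \geqslant y^{-1}w$. If instead $s$ is not a left descent of $x$, concatenate reduced words for $x$ and for $x^{-1}w$ into a reduced word for $w$; by the strong exchange condition $sw$ is obtained by deleting one of its letters, and this letter cannot lie in the first block (else $sx < x$), so $sw = x\,p'$ with $p' = x^{-1}(sw)$, $\ell(p') = \ell(x^{-1}w) - 1$ and $p' < x^{-1}w$; in particular $x \leqslant_R sw$. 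The lifting property now upgrades $x \leqslant y$ to $x \leqslant sy$, and the inductive hypothesis applied to $(sw; x, sy)$ gives $p' \geqslant y^{-1}w$; since $x^{-1}w > p'$ we conclude $x^{-1}w \geqslant y^{-1}w$.

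The main obstacle — and the reason a naive induction fails — is exactly the choice of $s$. If one peels off an arbitrary left descent of $w$, or a left descent of $x$, one can be forced into the situation where $s$ is a left descent of $x$ but not of $y$; then the factor coming from $y$ shrinks strictly, and the inductive hypothesis only yields $x^{-1}w$ dominating an element that is strictly below $y^{-1}w$, which is too weak. Peeling a left descent of the \emph{larger} element $y$ is precisely what makes both cases go in the favourable direction. What remains is routine: that the two appeals to the lifting property are its standard consequences (``$s$ a common left descent of $x \leqslant y$ gives $sx \leqslant sy$'', and ``$s$ a left descent of $y$ but not of $x$, with $x \leqslant y$, gives $x \leqslant sy$'', both contained in Proposition~2.2.7 of {\cite{BjornerBrenti}}), and the small bookkeeping that under $w \mapsto sw$ the relevant suffixes transform as stated. (Note the argument never uses $w_0$, so the reformulated statement, and hence this proposition, in fact holds for arbitrary Coxeter groups.)
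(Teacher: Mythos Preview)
Your proof is correct and takes a genuinely different route from the paper's. The paper first applies the order-reversing involution $a\mapsto aw_0$ to recast the statement as one about elements sharing a common \emph{weak-order prefix} $u^{-1}$, and then invokes Proposition~\ref{weaklifting}, whose induction peels off a left descent of that common prefix. Your argument instead rewrites the conclusion as a comparison of the \emph{suffixes} $x^{-1}w$ and $y^{-1}w$ and inducts on $\ell(w)$ by peeling a left descent of the larger prefix $y$; the two-case split (whether $s$ is also a descent of $x$) is handled cleanly by the lifting property and the exchange condition. The payoff of your approach is exactly what you note at the end: it never invokes $w_0$, so the proposition holds for arbitrary Coxeter groups, whereas the paper's reduction to Proposition~\ref{weaklifting} via $w_0$ is confined to the finite case. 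The paper's proof is a bit shorter only because Proposition~\ref{weaklifting} has already been established; yours is self-contained and strictly more general.
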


\begin{proof}
We recall that the map $a \rightarrow a w_0$ is order reversing for both the
Bruhat order and for the weak order ({\cite{BjornerBrenti}}
Propositions~2.3.4 and~3.1.5). Thus if we make the variable changes $x
\rightarrow x w_0$, $y \rightarrow y w_0$ and $u \rightarrow w_0 u$ the
statement is seen to be equivalent to:
\[ \text{If $u^{- 1} \leqslant_R x$ and $u^{- 1} \leqslant_R y$ and $y
 \leqslant x$\quad then\quad} u x \geqslant u y. \]
This follows from Proposition~\ref{weaklifting} with $u = w^{- 1}$, $x = w
v$, $y = w z$.
\end{proof}

The weak order $\leqslant_R$ has the following ``meet semilattice'' property,
which is Theorem~3.2.1 of~{\cite{BjornerBrenti}}: If $u, w \in W$ then the set
of $v$ such that $v \leqslant_R u$ and $v \leqslant_R w$ has a maximal
element. This maximal element is called the {\textit{meet}} of $u$ and $v$ for
the partial order $\leqslant_R$.

This property is not shared by the usual (strong) Bruhat order. For example in
the $A_2$ Weyl group $s_1 s_2$ and $s_2 s_1$ do not have a meet for the Bruhat
order. However we will prove a ``mixed'' meet property that combines the weak
and strong Bruhat orders.

\begin{theorem}[Mixed meet property]
\label{thm:mixedmeet}
Let $u, w$ be elements of $W$. Then the set 
\[\{ x \in W|x \leqslant_R u, x \leqslant w \}\]
has a unique maximal element $m$ for the Bruhat order. This
means that $m \leqslant_R u$, $m \leqslant w$ and if $v \leqslant_R u$, $v
\leqslant w$ then $v \leqslant m$. In terms of the $\downarrow$ operation we
have:
\[ m = u (u^{- 1} \downarrow U_w) . \]
\end{theorem}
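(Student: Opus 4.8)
The plan is to show that $m := u(u^{-1}\downarrow U_w)$ has all three advertised properties: $m\leqslant_R u$, $m\leqslant w$, and maximality among elements $x$ with $x\leqslant_R u$ and $x\leqslant w$. The first two are already essentially done by the lemmas we have assembled. For $m\leqslant_R u$, apply Lemma~\ref{lemseven} to $u^{-1}$ to get $u^{-1}\downarrow U_w\leqslant_R u^{-1}$, and then Lemma~\ref{lemeight} with $x = u^{-1}\downarrow U_w$ gives $u(u^{-1}\downarrow U_w)\leqslant_R u$, i.e. $m\leqslant_R u$. For $m\leqslant w$, that is exactly Lemma~\ref{lemnine}. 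So the real content is maximality: given any $v$ with $v\leqslant_R u$ and $v\leqslant w$, we must prove $v\leqslant m$.

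For the maximality step I would argue as follows. Write $v\leqslant_R u$, so $u = v\cdot(v^{-1}u)$ reduced, i.e. $v^{-1}\leqslant_R u^{-1}$ and $\ell(u^{-1}) = \ell(v^{-1}) + \ell(v^{-1}u)$. Set $w' := u^{-1}\downarrow U_w$, so $m = uw'$ with $w'\leqslant_R u^{-1}$ by the argument above; note $\ell(m) = \ell(u) - \ell(w')$. The idea is to transport the inequality $v\leqslant w$ through multiplication by $u^{-1}$ using the conditional reversal of Proposition~\ref{conditionalreversal}. That proposition says: if $x\leqslant_R u^{-1}$, $y\leqslant_R u^{-1}$ and $x\leqslant y$, then $ux\geqslant uy$. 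I want to apply it with the roles arranged so that the hypothesis $v\leqslant w$ becomes, after left-multiplying by $u^{-1}$, a statement comparing $u^{-1}v$ with $u^{-1}\downarrow U_w = w'$. The subtlety is that $v\leqslant w$ is a strong-order inequality while the conditional reversal needs both its arguments $\leqslant_R u^{-1}$; here $u^{-1}v\leqslant_R u^{-1}$ is fine since $v\leqslant_R u$, but $w$ itself need not satisfy $w\leqslant_R u^{-1}$ — only the "projection" $u^{-1}\downarrow U_w$ does. So the correct move is first to replace $w$ by its $U$-operator image: the operation $\cdot\downarrow U_w$ is order-preserving and fixes the Bruhat interval $[1,\,\text{something}]$ appropriately (Theorem~\ref{translatemin} and Propositions~\ref{downmonotone},~\ref{downmonotonebis}), and from $v\leqslant w$ one deduces $u^{-1}v\leqslant u^{-1}\downarrow U_w$. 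Concretely: $u^{-1}[1,w]$ has minimal element $u^{-1}\downarrow U_w$ by Theorem~\ref{translatemin}, and since $v\in[1,w]$ (as $v\leqslant w$) we get $u^{-1}v\geqslant u^{-1}\downarrow U_w$ — wait, that is the wrong direction, so instead one uses that $u^{-1}v\leqslant_R u^{-1}$ together with the conditional-reversal philosophy: applying Proposition~\ref{conditionalreversal} to $x = u^{-1}\downarrow U_w$ and $y = u^{-1}v$ requires $u^{-1}\downarrow U_w\leqslant u^{-1}v$, which is the statement just derived, and yields $u(u^{-1}\downarrow U_w)\geqslant u(u^{-1}v)$, i.e. $m\geqslant v$.

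The step I expect to be the main obstacle is pinning down that $u^{-1}v\leqslant u^{-1}\downarrow U_w$, i.e. that left translation by $u^{-1}$ sends the strong inequality $v\leqslant w$ to $u^{-1}v\geqslant u^{-1}\downarrow U_w$ in the right direction, and verifying that both arguments really do lie below $u^{-1}$ in the weak order so that Proposition~\ref{conditionalreversal} applies. The key input is Theorem~\ref{translatemin}: the translated interval $u^{-1}[1,w]$ has minimal element $u^{-1}\downarrow U_w$, and $v\leqslant w$ means $v\in[1,w]$, hence $u^{-1}v\in u^{-1}[1,w]$, hence $u^{-1}v\geqslant u^{-1}\downarrow U_w$. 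Combined with $u^{-1}v\leqslant_R u^{-1}$ (from $v\leqslant_R u$, using that $v\leqslant_R u$ implies $v^{-1}\leqslant_R u^{-1}$ hence $u^{-1}v\leqslant_R u^{-1}$ after suitable bookkeeping, cf. Lemma~\ref{lemeight}) and $u^{-1}\downarrow U_w\leqslant_R u^{-1}$ (Lemma~\ref{lemseven} applied to $u^{-1}$), Proposition~\ref{conditionalreversal} with $x = u^{-1}\downarrow U_w \leqslant y = u^{-1}v$ gives $u\cdot(u^{-1}\downarrow U_w)\geqslant u\cdot(u^{-1}v)$, that is $m\geqslant v$, completing the proof. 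I would double-check the weak-order membership claims carefully, since that is exactly where analogous arguments tend to fail.
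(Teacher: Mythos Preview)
Your argument is correct and follows essentially the same architecture as the paper's proof: Lemmas~\ref{lemseven}, \ref{lemeight}, \ref{lemnine} give $m\leqslant_R u$ and $m\leqslant w$, and the maximality step goes through Proposition~\ref{conditionalreversal} applied to $x=u^{-1}\downarrow U_w$ and $y=u^{-1}v$, both of which lie $\leqslant_R u^{-1}$. The only difference is in how you obtain the key inequality $u^{-1}\downarrow U_w\leqslant u^{-1}v$: you invoke Theorem~\ref{translatemin} (since $v\in[1,w]$ implies $u^{-1}v\in u^{-1}[1,w]$, whose minimum is $u^{-1}\downarrow U_w$), whereas the paper uses Proposition~\ref{downmonotonebis} to get $u^{-1}\downarrow U_w\leqslant u^{-1}\downarrow U_v$ and then observes $u^{-1}\downarrow U_v=u^{-1}v$ because $v\leqslant_R u$. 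Both routes are short and rest on the same underlying monotonicity of the Demazure product.

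One small slip to clean up: in your parenthetical you write ``$v\leqslant_R u$ implies $v^{-1}\leqslant_R u^{-1}$''. This is false in general (it gives $v^{-1}\leqslant_L u^{-1}$, not $\leqslant_R$). The conclusion you actually need, $u^{-1}v\leqslant_R u^{-1}$, is correct and follows directly from Lemma~\ref{lemeight} with $u$ replaced by $u^{-1}$ and $x=v$; just drop the erroneous intermediate claim.
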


\begin{proof}
It follows from Lemmas~\ref{lemseven} and~\ref{lemeight} that $u (u^{- 1}
\downarrow U_w) \leqslant_R u$. Moreover by Lemma~\ref{lemnine} we have $u
(u^{- 1} \downarrow U_w) \leqslant w$.

Now suppose that $v \leqslant_R u$, $v \leqslant w$. We must prove that $v
\leqslant m$. By Proposition~\ref{downmonotonebis} we have
\[ u^{- 1} \downarrow U_w \leqslant u^{- 1} \downarrow U_v . \]
Now we note that $u^{- 1} \downarrow U_w, u^{- 1} \downarrow U_v$ are both
$\leqslant_R u^{- 1}$ and so we may apply
Proposition~\ref{conditionalreversal} to see that
\[ m = u (u^{- 1} \downarrow U_w) \geqslant u (u^{- 1} \downarrow U_v) . \]
However since $v \leqslant_R u$ we have $u^{- 1} \downarrow U_v = u^{- 1} v$
proving that $m \geqslant v$, as required.
\end{proof}

\section{Matrix coefficients of intertwining integrals\label{sec:padic}}

Let $G$ be a split reductive group over a nonarchimedean local field $F$. Let
$\mathfrak{o}$ and $\mathfrak{p}$ be the ring of integers of $F$, and its
maximal ideal, and let $q = | \mathfrak{o}/\mathfrak{p} |$ be the residue
cardinality. Let $T$ be a split maximal torus and $B = T N$ a Borel subgroup
of $G$ containing $T$, with $N / U$ its unipotent radical. Let $\Phi$ be the
root system of $G$ with respect to $T$, with $\Phi^+$ the positive roots. Let
$\hat{T}$ be the dual torus in the connected Langlands dual group $\hat{G}$.
Let $\Lambda = X^{\ast} (\hat{T})$ be the weight lattice of $\hat{G}$. If
$\mathbf{z} \in \hat{T} (\mathbb{C})$ and $\lambda \in \Lambda$ we will
denote by $\mathbf{z}^{\lambda}$ the application of $\lambda$ to
$\mathbf{z}$.

The tori $T$ and $\hat{T}$ are in duality, so $\Lambda$ may be identified
with the cocharacter group $X_{\ast} (T) \cong T (F) / T (\mathfrak{o})$. Let
$\varpi$ be a generator of $\mathfrak{p}$. With $\lambda \in \Lambda$, the
image of $\varpi$ under the cocharacter $F^{\times} \longrightarrow T (F)$
corresponding to $\lambda$ will be denoted $\varpi^{\lambda}$. Then the coset
$\varpi^{\lambda} T (\mathfrak{o})$ is the image of $\lambda$ under the
isomorphism $X_{\ast} (T) \cong T (F) / T (\mathfrak{o})$.

Also $\mathbf{z}$ determines an unramified quasicharacter
$\chi_{\mathbf{z}}$ of $T (F)$ such that $\chi_{\mathbf{z}}
(\varpi^{\lambda}) =\mathbf{z}^{\lambda}$. The principal series
representation $(\pi_{\mathbf{z}}, V_{\mathbf{z}})$ is the representation
of $G (F)$ obtained from $\chi_{\mathbf{z}}$ by parabolic induction. Thus
the space $V_{\mathbf{z}}$ consists of locally constant functions that
satisfy
\[ \phi (b g) = (\delta^{1 / 2} \chi_{\mathbf{z}}) (b) \phi (g), \qquad b
   \in B (F) . \]
Here the function $\chi_{\mathbf{z}}$ is extended to $B (F)$ by means of the
homomorphism $B (F) \longrightarrow T (F)$ with kernel $U (F)$, and $\delta$
is the modular quasicharacter on $B (F)$.

Let $K = G (\mathfrak{o})$ be the standard maximal compact subgroup. Let $J$
be the Iwahori subgroup that is the preimage of $B (\mathbb{F}_q)$ under the
reduction mod $\mathfrak{p}$ homomorphism $K \rightarrow G (\mathbb{F}_q)$. We
will normalize the Haar measures on $G$ and $K$ so that $J$ has volume~$1$.

The contragredient $\hat{\pi}_{\mathbf{z}}$ of $\pi_{\mathbf{z}}$ is
$\pi_{\mathbf{z}^{- 1}}$. Indeed an invariant nondegenerate bilinear pairing
$V_{\mathbf{z}} \times V_{\mathbf{z}^{- 1}} \longrightarrow \mathbb{C}$ is
given by
\[ \label{innerp} \langle f_1, f_2 \rangle = \int_K f_1 (k) f_2 (k) \, d k. \]
Thus we will denote $\hat{V}_{\mathbf{z}} = V_{\mathbf{z}^{- 1}}$.

A basis of $J$-fixed vectors in $V_{\mathbf{z}}$ consists of the functions
$\{ \phi_w^{\mathbf{z}} \}$ with support in a single double coset $B w J$
with $w$ in the Weyl group $W = N (T) / T$. We choose the representative of a
Weyl group element $w \in W$ to be in $N (T) \cap K$ and by abuse of notation
we denote the representative by the same letter $w$; this abuse of notation is
justified by the fact that in dealing with the unramified principal series,
nothing depends on the choice of representative beyond the fact that it is in
$K$. In particular we define
\[ \phi_w^{\mathbf{z}} (b w' k) = \left\{\begin{array}{ll}
     \text{$(\delta^{1 / 2} \chi_{\mathbf{z}}) (b)$} & \text{if $w = w'$ in
     $W$,}\\
     0 & \text{otherwise},
   \end{array}\right. \]
and this definition does not depend on the choice of representative since
$\chi_{\mathbf{z}}$ is unramified. In view of our identification
$\hat{V}_{\mathbf{z}} = V_{\mathbf{z}^{- 1}}$ we will denote
$\hat{\phi}_w^{\mathbf{z}} = \phi_w^{\mathbf{z}^{- 1}}$. Since $\text{vol}(JwJ)=q^{\ell(w)}$
it follows from the definition (\ref{innerp}) that
\begin{equation}
\label{innerplength}
\langle \phi^{\mathbf{z}}_w, \hat{\phi}_{w'}^{\mathbf{z}} \rangle =
   \text{vol} (J w J) \delta_{w, w'} = q^{\ell (w)} \delta_{w, w'} . 
\end{equation}
Following {\cite{BumpNakasuji,BumpNakasujiKL,NakasujiNaruse,AMSS}} we will not
work directly with $\phi_w^{\mathbf{z}}$ but with the basis $\{
\hat{\psi}_w^{\mathbf{z}} \}$ defined by
\[ \psi_w^{\mathbf{z}} = \sum_{y \geqslant w} \phi^{\mathbf{z}}_w . \]
In the contragredient we will consider
\[ \hat{\psi}_w^{\mathbf{z}} = \sum_{y \leqslant w} \phi^{\mathbf{z}^{-
   1}}_w . \]
Note the inversion of the Bruhat order.

If $w \in W$ there is an intertwining operator $\mathcal{M}_w :
V_{\mathbf{z}} \longrightarrow V_{w\mathbf{z}}$ defined by
\[ (\mathcal{M}_w f) (g) = \int_{N \cap w N_- w^{- 1}} f (w^{- 1} n g) \, d n.
\]
The integral is convergent if $| \mathbf{z}^{\alpha} | < 1$ for positive
roots $\alpha$, and has meromorphic continuation to all $\mathbf{z}$. Our
purpose is to study the ``matrix coefficient''
\[ \sigma (u, v, w) = \langle \mathcal{A}_v \psi^{\mathbf{z}}_u,
   \hat{\psi}_w^{v\mathbf{z}} \rangle . \]

Define a functional $\Lambda_w$ on $\mathcal{H}_q$ by
\[ \Lambda_w (T_y) = \left\{\begin{array}{ll}
     q^{\ell (y)} & \text{if $y \leqslant w$,}\\
     0 & \text{otherwise.}
   \end{array}\right. \]
If $w=w_0$ then $\Lambda_w(T_y)=q^{\ell(y)}$, and it is
easy to see that in this case $\Lambda_{w_0}$ is a ring
homomorphism $\mathcal{H}_q\to\mathbb{C}$.
Define
\begin{equation}
\label{theta_def}
\Theta (x, y, w) = \Lambda_w (T_x T_{y^{- 1}})\; . 
\end{equation}

The following result was proved in {\cite{BumpNakasujiKL}}.

\begin{theorem}
  \label{rrecursion}There exist polynomials $r_{u, v} (\mathbf{z})$ of
  $\mathbf{z} \in \hat{T} (\mathbb{C})$ such that $r_{u, v} = 0$ unless $u
  \leqslant v$ and $r_{u, u} = 1$ satisfying the following recursive
  relations. Let $s$ be a simple reflection such that $s v < v$. Then
  \[ r_{u, v} (\mathbf{z}) = \left\{\begin{array}{ll}
       \frac{1 - q}{1 -\mathbf{z}^{- v^{- 1} \alpha}} r_{u, s v}
       (\mathbf{z}) + r_{s u, s v} (\mathbf{z}) & \text{if $s u < u$,}\\
       (1 - q) \frac{\mathbf{z}^{- v^{- 1} \alpha}}{1 -\mathbf{z}^{- v^{-
       1} \alpha}} r_{u, s v} (\mathbf{z}) + q r_{s u, s v} (\mathbf{z}) &
       \text{if $s u > u$.}
     \end{array}\right. \]
\end{theorem}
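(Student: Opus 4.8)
The plan is to realize the $r_{u,v}$ as transition coefficients for the standard intertwining operators acting on the Iwahori-fixed vectors, and to read off the recursion by factoring a long intertwining operator into simple ones. When $sv<v$ one has $\ell(v)=\ell(sv)+1$, so the cocycle property of the intertwining integrals gives $\mathcal{A}_v=\mathcal{A}_s\circ\mathcal{A}_{sv}$, with the factor $\mathcal{A}_s$ now acting in the representation $V_{(sv)\z}$. Hence every identity for $\mathcal{A}_v$ on the $\psi$-basis is a consequence of the corresponding identity one step down (for $\mathcal{A}_{sv}$) together with the action of a \emph{single} simple intertwining operator.

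First I would record that single-operator action. For a simple reflection $s=s_i$ with simple root $\alpha$, Casselman's rank-one ($\mathrm{SL}_2$) computation writes $\mathcal{A}_s$ on the span of $\phi_1^{\z},\phi_s^{\z}$ as an explicit $2\times2$ matrix, rational in $q$ and $\z^{\alpha}$. Since $\mathcal{A}_s$ is a $G(F)$-morphism it commutes with the right convolution action of $\mathcal{H}_q$ on $J$-fixed vectors, and $\phi_1^{\z}$ generates $V_\z^J$ over $\mathcal{H}_q$, so this $2\times2$ matrix pins down $\mathcal{A}_s$ on all of $V_\z^J$. Because $\psi_w^{\z}=\sum_{y\geqslant w}\phi_y^{\z}$ is unitriangular for the Bruhat order relative to $\{\phi_w^{\z}\}$, one rewrites the result as an expansion of $\mathcal{A}_s\psi_u^{\z}$ in the $\psi^{s\z}$-basis; the structural fact one needs is that only $\psi_u^{s\z}$ and $\psi_{su}^{s\z}$ occur, with coefficients depending on $q$, on $\z^{\alpha}$, and on whether $su<u$ or $su>u$ — precisely the two-case shape of the statement.

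Iterating this down a reduced word for $v$ and evaluating at the identity (equivalently, pairing against $\hat\psi_1$) expresses $m_{u,v}=\mathcal{A}_v\psi^{\z}_u(1)$ in terms of $m_{u,sv}$ and $m_{su,sv}$; via the triangular expansion $m_{x,v}=\sum_{x\leqslant y\leqslant v}\overline{r_{y,v}}(\z)$ and M\"obius inversion over the interval, this becomes the asserted recursion for the $r_{u,v}$, once the substitution $q\mapsto q^{-1}$ built into the bar is accounted for. The exact root $v^{-1}\alpha$ in the denominator $1-\z^{-v^{-1}\alpha}$ appears because $\mathcal{A}_s$ is being applied in $V_{(sv)\z}$, so that the ``$s$-root'' of that parameter is $((sv)\z)^{\alpha}=\z^{(sv)^{-1}\alpha}=\z^{v^{-1}\alpha}$; keeping this twist straight as one iterates is the bookkeeping core of the argument.

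The step I expect to be the real obstacle is pinning down the coefficients \emph{exactly} — that the propagated rank-one formula yields precisely the two displayed expressions, including which one goes with $su<u$ versus $su>u$ and why the numerator acquires the extra factor $\z^{-v^{-1}\alpha}$ in the second case — rather than merely coefficients of the same general type; this forces a careful analysis of $\mathcal{A}_s$ applied to $\psi_u$ according as $u$ does or does not lie on the relevant wall. A second point needing care is consistency: if one \emph{defines} $r_{u,v}$ by the recursion, one must show the answer is independent of the choice of simple $s$ with $sv<v$, for which the natural route is to verify the braid relations for the rank-one intertwining matrices (a consequence of the cocycle property of $\mathcal{A}_\bullet$, reducible to rank-two checks); if instead $r_{u,v}$ is built directly from $\mathcal{A}_v$, consistency is automatic. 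The remaining assertions — $r_{u,v}=0$ unless $u\leqslant v$ and $r_{u,u}=1$ — fall out of the recursion together with the base case $v=u$.
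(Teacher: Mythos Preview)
The paper does not give its own proof of this theorem; it simply records the statement and cites \cite{BumpNakasujiKL} for the proof. Your outline --- factor $\mathcal{A}_v=\mathcal{A}_s\circ\mathcal{A}_{sv}$ via the cocycle relation, feed in Casselman's rank-one formula for $\mathcal{A}_s$ on Iwahori-fixed vectors, and track the parameter twist that turns the simple root $\alpha$ into $-v^{-1}\alpha$ --- is precisely the method of that reference, so there is nothing substantive to contrast.

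One remark on efficiency: the detour through $m_{u,v}$ and M\"obius inversion over Bruhat intervals is not needed. In \cite{BumpNakasujiKL} the $r_{u,v}$ are read off directly as the coefficients of the Hecke-algebra element $\mu_\z(v)=A_v(1_{\mathcal{H}_q})$ (see equation~\eqref{musume} of the present paper), and the recursion comes straight from the multiplicativity $\mu_\z(v)=\mu_\z(sv)\cdot\mu_{(sv)\z}(s)$ together with the explicit rank-one value of $\mu_\bullet(s)$. Defining $r_{u,v}$ this way also makes your ``consistency'' concern (independence of the choice of left descent $s$) automatic, as you note.
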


The Kazhdan-Lusztig R-polynomial $R_{u, v}$ is the limit of $r_{u, v}
(\mathbf{z})$ when $\mathbf{z} \longrightarrow \infty$ in such a way that
$\mathbf{z}^{\alpha} \longrightarrow \infty$ for all positive roots
$\alpha$. We note that since $s v < v$, $- v^{- 1} \alpha$ is a positive root
so $\mathbf{z}^{- v^{- 1} \alpha} \longrightarrow \infty$ under this
specialization.

The value $r_{u,v}(\mathbf{z})$ is a polynomial in $q$ and
$\mathbf{z}$. Let us denote by $\overline{r_{u,v}}(\mathbf{z})$
the result of replacing $q$ by $q^{-1}$ in this polynomial.

\begin{theorem}
  We have
\begin{equation}
  \label{sigmabnform} \sigma (u, v, w) = \sum_{\substack{
    x \geqslant u\\
    y \leqslant v}}
  q^{- \ell(y)} \Theta (x, y, w) \overline{r_{y, v}}(\mathbf{z}).
\end{equation}
\end{theorem}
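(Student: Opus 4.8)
The plan is to expand everything in terms of the double-coset bases $\phi^{\mathbf z}_x$ and $\widehat\phi^{\mathbf z}_y$, so that $\sigma(u,v,w)$ becomes a sum over pairs $(x,y)$ with $x\geqslant u$ and $y\leqslant w$ of the quantity $\langle \mathcal A_v\phi^{\mathbf z}_x,\widehat\phi^{v\mathbf z}_y\rangle$. The key point will be to identify this last pairing with an expression in the Hecke algebra: the action of $\mathcal A_v$ on the Iwahori-fixed subspace $V^J_{\mathbf z}$ is governed by the $r_{x,v}(\mathbf z)$ of Theorem~\ref{rrecursion}, which express $\mathcal A_v\phi^{\mathbf z}_x$ (after normalization) as a combination of $\phi^{v\mathbf z}_{x'}$; then one pairs with $\widehat\phi^{v\mathbf z}_y$ using the orthogonality relation \eqref{innerplength}, $\langle\phi^{\mathbf z}_{x'},\widehat\phi^{\mathbf z}_{y}\rangle=q^{\ell(x')}\delta_{x',y}$. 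First I would recall or re-derive the precise statement of how $\mathcal A_v$ acts on the $\phi^{\mathbf z}_x$; this is the content of the recursion in Theorem~\ref{rrecursion}, which is stated for the functionals but translates into an expansion $\mathcal A_v\phi^{\mathbf z}_x = \sum_{y}c_{x,y}(v,\mathbf z)\,\phi^{v\mathbf z}_{y}$ with the $c_{x,y}$ built from the $\overline{r_{y,v}}$.

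The main bookkeeping step is then to reorganize the resulting double (or triple) sum so that the Hecke-algebra structure constants appear packaged as $\Theta(x,y,w)=\Lambda_w(T_xT_{y^{-1}})$. Concretely, after substituting the expansion of $\mathcal A_v\psi^{\mathbf z}_u=\sum_{x\geqslant u}\mathcal A_v\phi^{\mathbf z}_x$ and the definition $\widehat\psi^{v\mathbf z}_w=\sum_{y'\leqslant w}\widehat\phi^{v\mathbf z}_{y'}$, one collects the contribution of each $\phi^{v\mathbf z}_{y}$ against each $\widehat\phi^{v\mathbf z}_{y'}$; the orthogonality forces $y=y'$ and produces a factor $q^{\ell(y)}$, and the coefficient with which $\phi^{v\mathbf z}_{y}$ occurs in $\mathcal A_v\phi^{\mathbf z}_x$, multiplied by $q^{\ell(y)}$, should be exactly $q^{-\ell(y)}\Theta(x,y,w)\,\overline{r_{y,v}}(\mathbf z)$ once one also uses the defining property of $\Lambda_w$ restricting the $y$-sum to $y\leqslant w$. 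The appearance of $\Theta(x,y,w)=\Lambda_w(T_xT_{y^{-1}})$ rather than a bare delta is the mechanism by which the condition $y\leqslant w$ (from $\widehat\psi^{v\mathbf z}_w$) and the condition $x\geqslant u$ (from $\psi^{\mathbf z}_u$) get combined with the Hecke-theoretic data of the intertwiner; I would verify this by checking the two sanity cases already discussed in the introduction, namely $v=1$ (where $T_xT_{y^{-1}}$ contributes $q^{\ell(x)}\delta_{xy}$ up to the $\Lambda_w$ cutoff and one recovers the Poincaré polynomial of $[u,w]$) and $w=1$ (where one should recover $m_{u,v}=\sum_{u\leqslant x\leqslant v}\overline{r_{x,v}}(\mathbf z)$).

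The hard part will be getting the normalizations and the direction of the Bruhat inequalities exactly right: the intertwiner $\mathcal A_v$ is the normalized version of $\mathcal M_v$, the $r_{u,v}$ are ``deformed'' R-polynomials with a specific functional equation, and the bases $\psi$ and $\widehat\psi$ use opposite Bruhat conventions, so a sign or an inversion $y\mapsto y^{-1}$ can easily slip in. I would pin this down by using the recursion of Theorem~\ref{rrecursion} together with an induction on $\ell(v)$: for $v=1$ the claim is immediate from \eqref{innerplength}, and for the inductive step one writes $v=sv'$ with $sv'<v$, uses $\mathcal A_v=\mathcal A'_s\circ\mathcal A_{v'}$ (composition of a rank-one intertwiner with the rest), applies the rank-one computation that underlies the $r$-recursion, and matches it against the corresponding recursion for $\Theta(x,y,w)=\Lambda_w(T_xT_{y^{-1}})$ obtained by expanding $T_{y^{-1}}=T_{v'^{-1}}T_{\ldots}$ or by the defining relations $T_sT_y$. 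The compatibility of these two recursions — one coming from the $p$-adic intertwiner, one from the Hecke algebra — is exactly the identity \eqref{sigmabnform}, so the proof reduces to checking that the recursions agree term by term, which is a finite but somewhat delicate verification.
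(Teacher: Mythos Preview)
Your overall strategy---expand $\psi_u^{\mathbf z}$ and $\widehat\psi_w^{v\mathbf z}$ in the $\phi$-bases, use the orthogonality \eqref{innerplength}, and identify the result with Hecke-algebra data---is the right one and is what the paper does. But there is a genuine confusion in your bookkeeping that, as written, makes the argument fail.

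The variable $y$ in the target formula ranges over $y\leqslant v$, not over $y\leqslant w$. It does \emph{not} index the basis elements $\widehat\phi_{y}^{v\mathbf z}$ appearing in $\widehat\psi_w^{v\mathbf z}$. In particular your sentence ``the coefficient with which $\phi^{v\mathbf z}_{y}$ occurs in $\mathcal A_v\phi^{\mathbf z}_x$, multiplied by $q^{\ell(y)}$, should be exactly $q^{-\ell(y)}\Theta(x,y,w)\,\overline{r_{y,v}}(\mathbf z)$'' cannot be right: the left side is independent of $w$, while the right side depends on $w$ through $\Theta(x,y,w)$. The entire sum over the $\widehat\phi$-indices $z\leqslant w$ is already absorbed into $\Theta(x,y,w)=\Lambda_w(T_xT_{y^{-1}})$, since $\Lambda_w(\sum_z a_zT_z)=\sum_{z\leqslant w}a_zq^{\ell(z)}$. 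So there are really \emph{three} indices in play: $x\geqslant u$ from $\psi_u$, $z\leqslant w$ from $\widehat\psi_w$, and a third index $y\leqslant v$ coming from the intertwiner itself; the $z$-sum collapses into $\Lambda_w$ and the $y$-sum is what survives in the formula.

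The paper makes this transparent by transferring everything to $\mathcal H_q$ via a linear isomorphism $\alpha_{\mathbf z}:V_{\mathbf z}^J\to\mathcal H_q$ (a device going back to Rogawski) sending $\phi_x^{\mathbf z}\mapsto T_x$. Under this transfer the pairing $\langle\,\cdot\,,\widehat\psi_w^{v\mathbf z}\rangle$ becomes the functional $\Lambda_w$, and the intertwiner $\mathcal A_v$ becomes \emph{right} multiplication by a single element
\[
\mu_{\mathbf z}(v)=\sum_{y\leqslant v} q^{-\ell(y)}\,\overline{r_{y,v}}(\mathbf z)\,T_{y^{-1}}\in\mathcal H_q,
\]
a formula quoted from \cite{BumpNakasujiKL}. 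Then
\[
\sigma(u,v,w)=\sum_{x\geqslant u}\Lambda_w\bigl(T_x\,\mu_{\mathbf z}(v)\bigr)
=\sum_{x\geqslant u}\sum_{y\leqslant v}q^{-\ell(y)}\overline{r_{y,v}}(\mathbf z)\,\Lambda_w(T_xT_{y^{-1}}),
\]
which is exactly \eqref{sigmabnform}. Your proposed induction on $\ell(v)$ via $\mathcal A_v=\mathcal A_s\mathcal A_{v'}$ is essentially how the formula for $\mu_{\mathbf z}(v)$ is proved in that reference, so your plan is salvageable once you separate the $y\leqslant v$ variable (from the intertwiner) from the $z\leqslant w$ variable (from $\widehat\psi_w$), and recognize that $\Theta$ packages the latter rather than the former.
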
  

\begin{proof}
The proof follows~{\cite{BumpNakasuji,BumpNakasujiKL}}, who made use of a
technique from~{\cite{Rogawski}} to model the intertwining operators
$\mathcal{A}_v$ by elements of $\mathcal{H}_q$. We note that by results of
Iwahori and Matsumoto~{\cite{IwahoriMatsumoto}} the convolution algebra of
$J$-biinvariant functions on $K$ is isomorphic to $\mathcal{H}_q$; in this
isomorphism $T_i$ corresponds to the characteristic function of the double
coset $J s_i J$.

The space $V_{\mathbf{z}}^J$ of Iwahori fixed vectors, like $\mathcal{H}_q$,
has dimension $| W |$. We define a vector space isomorphism
$\alpha_{\mathbf{z}} : V_{\mathbf{z}}^J \longrightarrow \mathcal{H}_q$ as
follows. Let $f \in V_{\mathbf{z}}^J$. Define $\alpha_{\mathbf{z}} (f)$ to
be the function on $G (F)$ defined by
\[ \alpha_{\mathbf{z}} (f) (g) = \left\{\begin{array}{ll}
     f (g^{- 1}) & \text{if $g \in K$,}\\
     0 & \text{otherwise.}
   \end{array}\right. \]
The function $\alpha_{\mathbf{z}} (f)$ is clearly constant on double cosets
$J g (K \cap B (F))$ and is supported in $K$; but it is easy to see using the
Iwahori factorization of $J$ that if $g \in K$ then $J g (K \cap B (F)) = J g
J$. Thus $\alpha_{\mathbf{z}} (f)$ is in $\mathcal{H}_q$ interpreted as the
convolution algebra. It is easy to see that $\alpha_{\mathbf{z}} :
V_{\mathbf{z}}^J \longrightarrow \mathcal{H}_q$ is a linear isomorphism.

Now let $\phi \in \mathcal{H}_q$. Then we claim that
\begin{equation}
  \label{convohom} \alpha_{\mathbf{z}} (\pi (\phi) f) = \phi \ast
  \alpha_{\mathbf{z}} (f) .
\end{equation}
Indeed, applied to $g \in G$, both sides vanish unless $g \in K$. Assuming
this,
\[ \alpha_{\mathbf{z}} (\pi (\phi) f) (g) = (\pi (\phi) f) (g^{- 1}) =
   \int_K \phi (k) f (g^{- 1} k) \, d k = \int_K \phi (k)
   (\alpha_{\mathbf{z}} f) (k^{- 1} g) \, d k \]
from which (\ref{convohom}) follows. We may express (\ref{convohom}) by saying
that $\alpha_{\mathbf{z}}$ intertwines the action of $\mathcal{H}_q$ on
$V_{\mathbf{z}}^J$ with the left regular representation of $\mathcal{H}_q$
on itself.

Now we use the maps $\alpha_{\mathbf{z}}$ to transfer the intertwining
operator $\mathcal{A}_w : V_{\mathbf{z}} \longrightarrow V_{w\mathbf{z}}$
to a map $A_w : \mathcal{H}_q \longrightarrow \mathcal{H}_q$ by requiring the
following diagram to commute:
\[\begin{tikzcd}
V_{\mathbf{z}} \arrow{r}{\mathcal{A}_v}\arrow{d}{\alpha_{\mathbf{z}}} & V_{v\mathbf{z}}\arrow{d}{\alpha_{\mathbf{z}}}\\
\mathcal{H}_q\arrow{r}{A_v} & \mathcal{H}_q 
\end{tikzcd}\]

Using (\ref{convohom}) the map $A_w$ is an intertwining operator for the left
regular representation of $\mathcal{H}_q$, and therefore $A_w (\phi) = A_w
(\phi \ast 1_{\mathcal{H}_q}) = \phi \ast \mu_{\mathbf{z}} (w)$ where
$\mu_{\mathbf{z}} (w) = A_w (1_{\mathcal{H}_q}) \in \mathcal{H}_q$. This
much is in {\cite{BumpNakasuji}}. In {\cite{BumpNakasujiKL}} it was shown that
\begin{equation}
  \label{musume} \mu_{\mathbf{z}} (w) = \sum_{y \leqslant w} q^{- \ell(y)}
  \overline{r_{y, w}} \, T_{y^{- 1}}
\end{equation}
where $r_{u, w}$ is as in Theorem~\ref{sigmabnform}.
We may now compute the pairing $\langle \mathcal{A}_v \psi^{\mathbf{z}}_u,
\hat{\psi}^{v\mathbf{z}}_w \rangle$. This equals
\[ \sum_{x \leqslant u} \langle \mathcal{A}_v \phi_x, \hat{\psi}_w \rangle =
   \sum_{x \leqslant u} \Lambda_w (T_x \mu_{\mathbf{z}} (v)) . \]
Now substituting (\ref{musume}) and using (\ref{innerplength}) we
obtain~(\ref{sigmabnform}).
\end{proof}

\section{Properties of $\sigma(u,v,w)$}

In the last section we introduced the linear forms $\Lambda_w$
on the Hecke algebra, and the related values $\Theta(x,y,w)$.
These were defined in (\ref{theta_def}) and appeared in the formula (\ref{sigmabnform}) for $\sigma$.
Therefore our first task is to prove some properties of $\Theta$.
It is obviously a polynomial in~$q$.

\begin{proposition}
  \label{thetalower}The polynomial $\Theta (x, y, w)$ is divisible by
  $q^{(\ell (x) + \ell (y) + \ell (x y^{- 1})) / 2}$. Its degree is bounded
  by $\ell(x)+\ell(y)$.
\end{proposition}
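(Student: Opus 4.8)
The plan is to carry the computation out in the Hecke algebra $\mathcal{H}_q$. Writing $T_xT_{y^{-1}}=\sum_z c_z(q)\,T_z$ in the standard basis (so $c_z(q)\in\mathbb{Z}[q]$), the definition $(\ref{theta_def})$ of $\Theta$ and the definition of $\Lambda_w$ give $\Theta(x,y,w)=\Lambda_w(T_xT_{y^{-1}})=\sum_{z\leqslant w}c_z(q)\,q^{\ell(z)}$. So it suffices to show, for every $z$ with $c_z\neq0$, that $\deg_qc_z+\ell(z)\leqslant\ell(x)+\ell(y)$ and that $\operatorname{val}_qc_z+\ell(z)\geqslant\tfrac12(\ell(x)+\ell(y)+\ell(xy^{-1}))$. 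The first is easy: let $\mathcal{F}_d\subseteq\mathcal{H}_q$ be the $\mathbb{Z}$-span of the $q^jT_w$ with $j+\ell(w)\leqslant d$; the relations $T_sT_w=T_{sw}$ (for $sw>w$) and $T_sT_{sw}=(q-1)T_{sw}+qT_w$ show $T_s\mathcal{F}_d\subseteq\mathcal{F}_{d+1}$, hence $T_a\mathcal{F}_d\subseteq\mathcal{F}_{d+\ell(a)}$ by multiplying along a reduced word for $a$, hence $T_xT_{y^{-1}}\in\mathcal{F}_{\ell(x)+\ell(y)}$, which is the degree statement.

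The divisibility is subtler (the dual filtration fails to be multiplicative --- already $T_s^2=(q-1)T_s+qT_e$ breaks the expected bound --- so the estimate must retain $\ell(xy^{-1})$). Expand $T_xT_{y^{-1}}$ along the length-$n$ word ($n=\ell(x)+\ell(y)$) obtained by concatenating reduced words for $x$ and $y^{-1}$, multiplying from the left. At each step $T_v\mapsto T_{vs}$ if $vs>v$, while $T_v\mapsto qT_{vs}+(q-1)T_v$ if $vs<v$; call the latter a \emph{branching step}. The expansion is then a sum over ``paths'' (one choice per branching step), a path contributing a single monomial $q^a(q-1)^b$ to some $T_c$, where $a$, $b$ count the branching steps at which the path chose $T_{vs}$ (``descended'') resp.\ $T_v$ (``stalled''). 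A count of length changes gives $\ell(c)=n-2a-b$, so $a+\ell(c)=n-D$ with $D=a+b$ the number of branching steps met; since the monomial entering $\Theta$ is $q^a(q-1)^bq^{\ell(c)}$ and $(q-1)^b$ has zero valuation, $\operatorname{val}_q\Theta(x,y,w)\geqslant n-\max_{\text{paths}}D$. The \emph{straight} path (descend at every branching step) runs through the whole word, ends at the product $xy^{-1}$, and meets exactly $\tfrac12(n-\ell(xy^{-1}))$ branching steps; so the divisibility follows once we show that every path has $D\leqslant\tfrac12(\ell(x)+\ell(y)-\ell(xy^{-1}))$, i.e.\ that the straight path maximizes $D$.

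This last claim I would prove by induction on the number $b$ of stalling steps of the path. If $b=0$ the path is straight and equality holds. If $b\geqslant1$, let $t_0$ be the last stalling step (it lies in the $y^{-1}$-block, the $x$-block having no branching steps), $v$ the current element there, $s$ the simple reflection read at $t_0$ (so $vs<v$), $v'=vs$, and $r$ the product of the letters read after $t_0$. Let $\pi'$ be the path agreeing with ours before $t_0$, descending at $t_0$, and descending at every branching step afterwards; then $\pi'$ has $b-1$ stalling steps, ends at $v'r$ while our path ends at $vr=v'sr$, and a one-line count gives $D-D'=\tfrac12(\ell(v'r)-\ell(v'sr)+1)$. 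So we are reduced to the inequality $\ell(v'sr)\geqslant\ell(v'r)+1$. Here the letters read after $t_0$ form a reduced suffix of our reduced word for $y^{-1}$, so $\ell(sr)=\ell(r)+1$ and $r^{-1}\alpha$ is a positive root ($\alpha$ the simple root of $s$); and $vs<v$ forces $\ell(v's)=\ell(v')+1$, so $v'\alpha>0$. Writing $v'sr=(v'r)t$ with $t=r^{-1}sr$ the reflection in the positive root $r^{-1}\alpha$, and noting $(v'r)(r^{-1}\alpha)=v'\alpha>0$, the standard length criterion gives $\ell(v'sr)>\ell(v'r)$, as needed. The crux of the whole argument is the reduction of the divisibility to this maximality of the straight path, together with setting up the unfolding induction cleanly; the closing root computation is then immediate.
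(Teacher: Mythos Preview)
Your approach is essentially the paper's: expand $T_xT_{y^{-1}}$ along the concatenated reduced word into contributions $q^a(q-1)^bT_z$ and bound $a+\ell(z)$ from below (and $\deg+\ell(z)$ from above). The degree bound via the filtration $\mathcal{F}_d$ repackages the paper's count cleanly. For the divisibility, the paper asserts that every term in the expansion has $a+b=\tfrac12(\ell(x)+\ell(y)-\ell(xy^{-1}))$; in fact only the inequality $a+b\leqslant\tfrac12(\ell(x)+\ell(y)-\ell(xy^{-1}))$ holds in general---and only the inequality is needed. (For instance, in type $A_2$ the expansion of $T_{w_0}T_{w_0}$ contains a term $q(q-1)T_{w_0}$ with $a+b=2<3$.) Your induction on the number of stalls, replacing the last stall by a descent and reducing to the positivity $(v'r)(r^{-1}\alpha)=v'\alpha>0$ (which uses exactly that the letters from position $t_0$ onward form a reduced suffix of the word for $y^{-1}$), gives a correct and complete proof of precisely this inequality. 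So your argument follows the same line as the paper but supplies the step the paper's sketch leaves unjustified.
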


Note that $\ell (x y^{- 1}) \equiv \ell (x) + \ell (y)$ modulo $2$, so the
exponent here is an integer.

\begin{proof}
  Let us evaluate $\Theta (x, y, w) = \Lambda_w (T_x T_{y^{- 1}})$ from the
  definition. Let $r = \ell (x)$, $s = \ell (y)$ and $t = \ell (x y^{- 1})$.
  Taking reduced words $x = s_{i_1} \cdots s_{i_r}$ and $y = s_{j_1} \cdots
  s_{j_s}$ we have $T_x T_{y^{- 1}} = T_{i_1} \cdots T_{i_r} T_{j_s} \cdots
  T_{j_1}$. Using the braid relations and the quadratic relations we may
  express this as a sum of terms of the form $q^a (q - 1)^b T_{k_1} \cdots
  T_{k_u}=q^a (q-1)^b T_z$, where $z=s_{k_1}\cdots s_{k_u}$ is a reduced
  expression, as follows. Each time we use a quadratic relation $T_i^2 = (q - 1)
  T_i + q$ we replace $T_i^2$ by either $(q - 1) T_i$ or $q$; the number of
  times we take $(q - 1) T_i$ is $b$, and the number of times we take $q$ is
  $a$. The total number of such quadratic relation applications is
  $\frac{1}{2} (r + s - t)$, so $a + b = \frac{1}{2} (r + s - t)$. Moreover $u
  = b + t$ because $t$ of the $T_i$ in the expression $q^a (q - 1)^b T_{k_1}
  \cdots T_{k_u}$ come from the original expression and an additional $b$ come
  from the $b$ terms $(q - 1) T_i$ in the quadratic relations. Now applying
  $\Lambda_w$ to $q^a (q - 1)^b T_{k_1} \cdots T_{k_u}$ we obtain a polynomial
  in $q$, whose term of lowest degree is $(- 1)^b q^{a + u}$. The degree of
  this is $a + u = a + b + t = \frac{1}{2} (r + s + t) .$ So this is the
  smallest possible exponent for a monomial in $\Theta (x, y, w)$. The
  degree bound may be obtained similarly.
\end{proof}

If $\xi = \sum_{w \in W} a_w T_w \in \mathcal{H}_q$ we define the
{\textit{support $\supp (\xi)$}} of $\xi$ to be $\{ w \in W|a_w \neq 0
\}$.

\begin{proposition}
  \label{supptt}The set $\supp (T_u T_v)$ has minimal element $u v$
  and maximal element $u\circ v$.
\end{proposition}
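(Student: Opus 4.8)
The plan is to induct on $\ell(v)$, using the multiplicative structure of the Hecke algebra and Proposition~\ref{monotonedemazure}. The base case $\ell(v) = 0$ is trivial since $T_u T_1 = T_u$ and $u\cdot 1 = u\circ 1 = u$. For the inductive step, write $v = v' s$ with $\ell(v') = \ell(v)-1$, so $T_v = T_{v'} T_s$ and $T_u T_v = (T_u T_{v'})T_s$. By the inductive hypothesis $\supp(T_u T_{v'})$ has minimal element $uv'$ and maximal element $u\circ v'$. The key computational input is the rule for right multiplication by $T_s$: for a single basis element, $T_z T_s = T_{zs}$ if $zs > z$, and $T_z T_s = (q-1)T_z + qT_{zs}$ if $zs < z$; in either case $\supp(T_z T_s) \subseteq \{z, zs\}$, and $z\cdot s \in \supp(T_zT_s)$ always (it is $zs$ in the first case, present with coefficient $q$ in the second), while in the second case $z$ itself also appears.

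For the maximal element: applying $T_s$ on the right to a sum $\sum_z a_z T_z$ produces a linear combination supported in $\bigcup_z \{z, zs\}$. Every element of $\supp(T_uT_v)$ is therefore of the form $z$ or $zs$ for some $z \in \supp(T_uT_{v'})$, hence is $\leqslant (u\circ v')\circ s = u\circ v$ by Proposition~\ref{monotonedemazure} (monotonicity of $\circ$) applied with the factor $s$, or more directly because $z \leqslant u\circ v'$ forces both $z$ and $zs$ to be $\leqslant u\circ v' \circ s$. Conversely $u\circ v = (u\circ v')\circ s$ does occur in the support: take the top term $T_{u\circ v'}$ in $T_uT_{v'}$; multiplying by $T_s$ contributes $T_{(u\circ v')s}$ with a nonzero coefficient ($1$ or $q$), and $(u\circ v')s = (u\circ v')\circ s = u\circ v$ when $(u\circ v')s > u\circ v'$, while if $(u\circ v')s < u\circ v'$ then $(u\circ v')\circ s = u\circ v'$ and the coefficient of $T_{u\circ v'}$ picks up the $(q-1)$ contribution — in all cases $u\circ v$ survives with nonzero coefficient because no other $z \in \supp(T_uT_{v'})$ can produce $u\circ v$ (any such $z$ or $zs$ equal to $u\circ v$ would have to satisfy $z \geqslant u\circ v$ or be reachable, contradicting maximality of $u\circ v'$ unless $z = u\circ v'$). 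This last cancellation-avoidance point is the part needing care.

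For the minimal element: by the same case analysis, every element of $\supp(T_uT_v)$ is $\geqslant$ the minimum of $\supp(T_uT_{v'})\cdot s$-stuff; concretely, writing any element as $z$ or $zs$ with $z \geqslant uv'$, and using that right multiplication by $s$ changes length by exactly one, one checks $z, zs \geqslant \min(uv', uv's) = uv$ (since $uv' \cdot s = uv$ and the Bruhat order behaves predictably under $\cdot s$: if $uv's > uv'$ then $uv = uv's \geqslant uv'$; if $uv's < uv'$ then $uv = uv's$ and still everything in the support lies above it). That $uv$ itself appears: the minimal term $T_{uv'}$ in $T_uT_{v'}$, multiplied by $T_s$, contributes $T_{(uv')s}$ or $(q-1)T_{uv'} + qT_{(uv')s}$; since $(uv')s = uv$, in the first case $uv$ appears directly with coefficient $1$, and in the second case $uv = (uv')s < uv'$ appears with coefficient $q \neq 0$, and again no other source can cancel it since any other $z$ in the support is $> uv'$, so $z$ and $zs$ are both $> uv$ (equality $zs = uv < uv'\leqslant z$ is impossible).

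The main obstacle I anticipate is verifying that the extremal terms $T_{u\circ v}$ and $T_{uv}$ are not killed by cancellation: in principle two different basis elements $T_z, T_{z'}$ in $T_uT_{v'}$ could both contribute to the coefficient of $T_{u\circ v}$ (one via $z \mapsto z$, another via $z' \mapsto z's = u\circ v$). The resolution is the sharp form of the inductive hypothesis — $u\circ v'$ is the \emph{unique} maximal and $uv'$ the unique minimal element — combined with the observation that for $z \ne u\circ v'$ one cannot have $zs = u\circ v$ (which would force $z \geqslant u\circ v \cdot s \geqslant$ something contradicting $z < u\circ v'$ after a short Bruhat-order argument via the lifting property), and symmetrically at the bottom. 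Since only the single extremal basis element contributes, its coefficient is a nonzero power of $q$ (times $\pm1$), so no cancellation occurs. Alternatively, and perhaps more cleanly, one can deduce the maximal-element claim directly from Proposition~\ref{KMLemma}: concatenating reduced words for $u$ and $v$ gives a sequence containing exactly those $w$ with $w \leqslant u\circ v$, and the minimal-element claim from the well-known fact that the lowest-degree (in $q$) term of $T_uT_v$ is $T_{uv}$ with coefficient $1$, obtained by always choosing reduced-word concatenation — I would present whichever of these two routes is shortest in the final writeup.
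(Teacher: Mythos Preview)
Your approach is essentially the paper's — induction on $\ell(v)$, peeling off a simple reflection — but you peel from the \emph{right} ($v=v's$) whereas the paper peels from the \emph{left} ($v=sv'$). This seemingly minor choice is what generates all your cancellation worries. With the left peel, $T_uT_v=(T_uT_s)T_{v'}$ and $T_uT_s$ is either $T_{us}$ or $(q-1)T_u+qT_{us}$; so one obtains at most two products of the form $T_{?}T_{v'}$, and induction applies directly to each. In the two-term case the minima are $uv'$ and $usv'=uv$, and a short check gives $uv'>uv$, so $uv$ appears only in the second summand and no cancellation is possible. Your right peel forces you instead to multiply the whole inductive sum $\sum_z c_zT_z$ by $T_s$ and argue term by term.

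Your handling of cancellation has real gaps. For the maximum in the case $(u\circ v')s<u\circ v'$ (so $u\circ v=u\circ v'$), you assert that only $z=u\circ v'$ can contribute to the coefficient of $T_{u\circ v}$; but $z'=(u\circ v')s$ may well lie in $\supp(T_uT_{v'})$, and since $z's=u\circ v'>z'$ we get $T_{z'}T_s=T_{u\circ v}$, an additional contribution. Your sketched ``lifting property'' argument does not exclude this. (The combined coefficient $c_{z'}+(q-1)c_{u\circ v'}$ is in fact nonzero — specialize $q\to 0$ — but that is not the argument you wrote.) Symmetrically, your lower-bound argument for the minimum only yields $\min(uv',uv)$, which in the case $uv>uv'$ is $uv'$, not $uv$; you would still need to show $uv'$ does not survive in the new support.

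Finally, your closing alternative is backwards: the lowest-$q$-degree part of $T_uT_v$ is $\pm T_{u\circ v}$, not $T_{uv}$ (at $q=0$ the relation becomes $T_s^2=-T_s$, and one checks $T_uT_v|_{q=0}=\pm T_{u\circ v}$). The specialization that isolates $uv$ is $q=1$, where $\mathcal{H}_q$ becomes the group algebra and $T_uT_v\mapsto uv$; that does show $c_{uv}(1)=1\neq0$, but it is not a ``lowest-degree'' statement.
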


\begin{proof}
  Let $s$ be a simple reflection that is a left descent of $v$, so $v = s
  v'$ with $\ell (v') < \ell (v)$. Thus $T_v = T_s T_{v'}$ and $T_u T_v = T_u
  T_s T_{v'}$.
  
  If $u s > u$ then $T_u T_v = T_u T_s T_{v'} = T_{u s} T_{v'}$. By induction
  on $\ell (v)$ the minimal element of $\supp (T_{u s} T_{v'})$ is $u s
  v' = u v$, and we are done in this case.
  
  If $u s < u$ then $T_u T_s = (q - 1) T_u + q T_{u s}$ and, again using
  induction on $\ell (v)$:
  \[ \min (\supp (T_u T_s T_{v'})) = \min (\supp (T_u T_{v'}) \cup
     \supp (T_{u s} T_{v'})) = \min (u v', u s v') = u s v' = u v. \]
  
  This shows that the minimal element of $\supp(T_uT_v)$ is $u v$. The
  fact that the maximal element is $u\circ v$ may be proved similarly.
\end{proof}

Proposition~\ref{thetalower} gives a lower bound for the exponents of $q$ in the
polynomial $\Theta (x, y, w)$. It is not sharp since the actual exponent
depends on $w$. For example if $x = y = s$ is a simple reflection, then
$\Theta (s, s, 1) = q$ and the bound in Proposition~\ref{thetalower} is sharp.
However $\Theta (s, s, s) = q^2$ and the bound is not sharp in this case.

It is an empirical observation that
the polynomial $\Theta (x, y, w)$ is often a power of~$q$. For example:

\begin{proposition}
  \label{easycasestheta}
(i) If $xy^{-1}\leqslant w$, then $\Theta(x,y,w)$ is a nonzero polynomial
whose value at $q=1$ equals~1. On the other hand, if $xy^{-1}$ is not $\leqslant w$, 
then $\Theta(x,y,w)=0$.

\smallskip\noindent
(ii) If $x\circ y^{-1}\leqslant w$, then $\Theta(x,y,w)=q^{\ell(x)+\ell(y)}$.
\end{proposition}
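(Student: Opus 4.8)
The plan is to deduce both parts directly from Proposition~\ref{supptt}. Write $T_x T_{y^{-1}} = \sum_{z \in W} a_z(q)\, T_z$ with $a_z(q) \in \mathbb{Z}[q]$; by Proposition~\ref{supptt} the support $\supp(T_x T_{y^{-1}})$ has least element $xy^{-1}$ and greatest element $x \circ y^{-1}$ for the Bruhat order. Applying $\Lambda_w$ term by term,
\[ \Theta(x,y,w) = \sum_{\substack{z \in \supp(T_x T_{y^{-1}})\\ z \leqslant w}} a_z(q)\, q^{\ell(z)}, \]
so each assertion reduces to identifying which $z$ in the support satisfy $z \leqslant w$.

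For~(i), suppose first that $xy^{-1} \not\leqslant w$. Every $z$ in the support has $z \geqslant xy^{-1}$, so $z \leqslant w$ would give $xy^{-1} \leqslant w$; hence no term survives and $\Theta(x,y,w) = 0$. Suppose instead $xy^{-1} \leqslant w$. Then I would specialize $q = 1$: the quadratic relation $T_i^2 = (q-1)T_i + q$ becomes $T_i^2 = 1$, so $T_z \mapsto z$ extends to an algebra isomorphism $\mathcal{H}_1 \cong \mathbb{C}[W]$, under which $T_x T_{y^{-1}} \mapsto x y^{-1}$; thus $a_z(1) = \delta_{z, xy^{-1}}$. Setting $q = 1$ in the displayed formula then gives $\sum_{z \leqslant w} a_z(1) = a_{xy^{-1}}(1) = 1$, so $\Theta(x,y,w)$ is a nonzero polynomial whose value at $q = 1$ is~$1$.

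For~(ii), suppose $x \circ y^{-1} \leqslant w$. Every $z$ in the support then satisfies $z \leqslant x \circ y^{-1} \leqslant w$, so the condition $z \leqslant w$ is vacuous and $\Theta(x,y,w) = \sum_z a_z(q)\, q^{\ell(z)} = \Lambda_{w_0}(T_x T_{y^{-1}})$, since $\Lambda_{w_0}(T_z) = q^{\ell(z)}$ for all $z$. As $\Lambda_{w_0}$ is a ring homomorphism (noted after its definition), this equals $\Lambda_{w_0}(T_x)\,\Lambda_{w_0}(T_{y^{-1}}) = q^{\ell(x)} q^{\ell(y^{-1})} = q^{\ell(x) + \ell(y)}$.

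The whole argument is short once Proposition~\ref{supptt} is available, and I do not expect a serious obstacle; the only step meriting a word of care is the $q = 1$ specialization in~(i)---one should recall that $\Theta(x,y,w)$ is genuinely a polynomial in $q$ (observed just before Proposition~\ref{thetalower}) and check that $T_z \mapsto z$ is a well-defined isomorphism $\mathcal{H}_1 \cong \mathbb{C}[W]$. Note that neither part requires the stronger (and here unproven) claim that $\supp(T_x T_{y^{-1}})$ is the entire Bruhat interval $[xy^{-1},\, x \circ y^{-1}]$; only its two extreme elements are used.
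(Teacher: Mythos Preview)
Your proof is correct and follows essentially the same approach as the paper: both arguments use Proposition~\ref{supptt} to identify the extreme elements of $\supp(T_xT_{y^{-1}})$, specialize $q\to 1$ (where $\mathcal{H}_q$ degenerates to $\mathbb{C}[W]$) for part~(i), and invoke that $\Lambda_{w_0}$ is a ring homomorphism for part~(ii). Your write-up is slightly more explicit in introducing the coefficients $a_z(q)$, but the logic is identical.
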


\begin{proof}
To prove (i), the vanishing of $\Theta(x,y,w)$ unless $xy^{-1}\leqslant w$ follows
from Proposition~\ref{supptt}. Assume that $xy^{-1}\leqslant w$. On specializing
$q\to 1$, the relations defining $\mathcal{H}_q$ become the Coxeter relations in $W$,
so $\Lambda_w(T_xT_{y^{-1}})$ has the same limit as $\Lambda_w(T_{xy^{-1}})=q^{\ell(xy^{-1})}$,
which is $1$. In particular $\Theta(x,y,w)$ is a nonzero polynomial.

We prove (ii).  First consider the case where $w=w_0$. Then $\Lambda_{w_0}$ is a homomorphism,
so
\[\Theta(x,y,w)=\Lambda_{w_0}(T_xT_{y^{-1}})=\Lambda_{w_0}(T_x)\Lambda_{w_0}(T_{y^{-1}})=
q^{\ell(x)}q^{\ell(y)}.\]
Now in the general case, then by Proposition~\ref{supptt},
every element of the support of $T_xT_{y^{-1}}$ is
$\leqslant x\circ y^{-1}$, so $\Lambda_w(T_xT_{y^{-1}})=\Lambda_{w_0}(T_xT_{y^{-1}})$,
and the statement follows.
\end{proof}

See Proposition~\ref{thetaeval} for another case where $\Theta(x,y,w)$
is known to be a power of~$q$. It is not hard to show that assuming
$xy^{-1}\leqslant w$, then when the polynomial
$\Theta(x,y,w)$ is evaluated at $q=1$ the result is~$1$. In particular
$\Theta(x,y,w)$ is nonzero in this case.

\begin{conjecture}
  Assume that the Kazhdan-Lusztig polynomial $P_{x y^{- 1},w} = 1$. Assume 
  also that $xy^{-1}\leqslant w$. (Otherwise $\Theta(x,y,w)=0$ by Proposition~\ref{easycasestheta}.) 
  Then $\Theta (x, y, w)$ is a power of~$q$.
\end{conjecture}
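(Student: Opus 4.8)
The plan is to split the hypothesis into a part that governs $w$ and a part that governs $x$ and $y$, and to attack the $w$-part geometrically. By Proposition~\ref{supptt} we have $\Theta(x,y,w)=0$ unless $xy^{-1}\leqslant w$, and by Proposition~\ref{easycasestheta}(ii) the statement already holds when $x\circ y^{-1}\leqslant w$; so we may assume $xy^{-1}\leqslant w$ but $x\circ y^{-1}\not\leqslant w$. The first point is that the assumption $P_{xy^{-1},w}=1$ is stronger than it looks. By the theorem of Deodhar, Carrell, Polo and Dyer it says that the Schubert variety $X_w=\overline{BwB/B}$ is rationally smooth at the torus-fixed point indexed by $xy^{-1}$. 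Since the non-rationally-smooth locus of $X_w$ is closed and $B$-stable, it is a union of Schubert subvarieties, so $\{v\leqslant w:P_{v,w}=1\}$ is an order filter in $[e,w]$; hence $P_{v,w}=1$ for \emph{every} $v$ with $xy^{-1}\leqslant v\leqslant w$. By Proposition~\ref{supptt} these $v$ are exactly the relative positions occurring in the sum defining $\Theta(x,y,w)$: the entire ``visible'' part of $X_w$ is rationally smooth.

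Next I would pass to geometry over the residue field. Realize $\mathcal{H}_q$ as the algebra of $G(\mathbb{F}_q)$-invariant functions on $\mathfrak{X}\times\mathfrak{X}$, $\mathfrak{X}=G/B$, with $T_w$ the indicator of the relative-position-$w$ locus. Then for a fixed base flag $F_0$ the functional $\Lambda_w(\eta)$ equals the sum of $\eta(F_0,F)$ over all $F\in\mathfrak{X}(\mathbb{F}_q)$ with $\mathrm{pos}(F_0,F)\leqslant w$, i.e.\ summation over the Schubert variety $X_w(F_0)$. Taking $\eta=T_xT_{y^{-1}}$ yields
\[
\Theta(x,y,w)=\#\bigl\{(F_2,F_3):\ \mathrm{pos}(F_0,F_3)=x,\ \mathrm{pos}(F_2,F_3)=y,\ \mathrm{pos}(F_0,F_2)\leqslant w\bigr\},
\]
an incidence variety $V=V(x,y,w)$ that projects onto the Schubert cell $C_x(F_0)\cong\mathbb{A}^{\ell(x)}$, with fibre over $F_3$ the intersection of the affine space $\{F_2:\mathrm{pos}(F_2,F_3)=y\}\cong\mathbb{A}^{\ell(y)}$ with the Schubert variety $X_w(F_0)$; it also projects onto $X_w(F_0)$ with fibre over a flag $F_2$ in relative position $z$ a variety of $a^{z}_{x,y^{-1}}(q)$ points, the Hecke structure constant.

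The target is then $\#V=q^{d}$, and the plan is to deduce it from the rational smoothness established in the first step. Concretely I would fix a generic one-parameter subgroup $\lambda\subset T$ and run a Bia{\l}ynicki--Birula / Deodhar-type cell decomposition of $X_w(F_0)$ compatible with the two incidence conditions, exhibiting $V$ as a disjoint union of strata; rational smoothness of $X_w$ along $[xy^{-1},w]$ should force each stratum to be an affine space and --- this is the delicate point --- force all their dimensions to coincide, so that $\#V=\sum_i q^{d_i}$ collapses to a single $q^{d}$. Equivalently, the input one needs is that the restriction to $V$ of the intersection cohomology complex of $X_w$ is a shifted constant sheaf --- the same phenomenon underlying the known ``$Q_{u,v}=1$'' cases of~(\ref{gkmuv}). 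A parallel, Hecke-theoretic line of attack would be to establish a recursion in $w$ for $\Lambda_w(T_xT_{y^{-1}})$ by peeling off a left descent $s$ of $w$, comparing $[e,w]$ with $[e,sw]$ via the lifting property and feeding in the $r$-polynomial recursion of Theorem~\ref{rrecursion} that produced~(\ref{sigmabnform}), and to show by induction that as long as the relevant Kazhdan--Lusztig polynomials remain equal to $1$, each step multiplies a power of $q$ by a power of $q$.

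The hard part will be exactly this collapse, and it is the step I expect to be the main obstacle. Unlike the condition $xy^{-1}\leqslant w$, which by Proposition~\ref{easycasestheta}(i) already pins down the value of $\Theta$ at $q=1$, the hypothesis $P_{xy^{-1},w}=1$ constrains only the $w$-side singularities and says nothing directly about the reduced words of $x$ and $y$, which govern how many $q^{a}(q-1)^{b}$ terms arise when $T_xT_{y^{-1}}$ is reduced (compare the proof of Proposition~\ref{thetalower}). Showing that all of those $(q-1)$-contributions must recombine into a single monomial precisely when $X_w$ is rationally smooth along $[xy^{-1},w]$ --- equivalently, that the incidence variety $V(x,y,w)$ has polynomial point count consisting of one term --- is the crux, and it is not clear that it can be done without input as robust as the motivic-Chern-class methods of~\cite{AMSS} or a genuinely new closed formula for the structure constants of $\mathcal{H}_q$ evaluated on Bruhat intervals.
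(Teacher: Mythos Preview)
The statement you are attempting to prove is labelled a \emph{Conjecture} in the paper, and the paper offers no proof: immediately after stating it, the authors write only that it ``has been verified by computer calculations for Cartan types $A_3$ and~$B_3$.'' There is therefore no proof in the paper to compare your proposal against.

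Your proposal is, by your own account, not a proof either. The preliminary observations are correct and potentially useful: the reduction to the range $xy^{-1}\leqslant w$, $x\circ y^{-1}\not\leqslant w$ via Propositions~\ref{supptt} and~\ref{easycasestheta} is sound; the order-filter property of $\{v\leqslant w:P_{v,w}=1\}$ does follow from the fact that the non--rationally-smooth locus of $X_w$ is closed and $B$-stable; and the point-count interpretation $\Theta(x,y,w)=\#V(x,y,w)$ over $\mathbb{F}_q$ is a valid reformulation. But the decisive step --- that rational smoothness of $X_w$ along $[xy^{-1},w]$ forces the Bia{\l}ynicki--Birula/Deodhar strata of the incidence variety $V(x,y,w)$ to be affine spaces \emph{of a common dimension} --- is asserted only as an expectation, and you yourself flag it as ``the delicate point'' and ``the crux'' that may require tools on the level of~\cite{AMSS}. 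The alternative recursive approach you sketch (peeling a descent of $w$ and invoking Theorem~\ref{rrecursion}) is even less developed: Theorem~\ref{rrecursion} governs the $r_{u,v}$, not the functionals $\Lambda_w$, and you do not indicate how a recursion in $w$ for $\Lambda_w(T_xT_{y^{-1}})$ would interact with the Kazhdan--Lusztig hypothesis.

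In short: the paper does not prove this statement, and neither does your proposal. What you have written is a reasonable research outline for attacking an open problem, with the central difficulty honestly identified but not resolved.
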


\noindent
This has been verified by computer calculations for Cartan types $A_3$ and~$B_3$.

\begin{lemma}
  \label{thetanv}If $x \geqslant u$, $y \leqslant v$ and $\Theta (x, y, w)
  \neq 0$ then
  \begin{equation}
  \label{thetanvcondition}
  U_{w^{- 1}} \downarrow u \leqslant U_{w^{- 1}} \downarrow x \leqslant y
     \leqslant v.
  \end{equation}
\end{lemma}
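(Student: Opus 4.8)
The plan is to handle the three inequalities in (\ref{thetanvcondition}) in turn. The rightmost, $y\leqslant v$, is one of the hypotheses. The leftmost, $U_{w^{-1}}\downarrow u\leqslant U_{w^{-1}}\downarrow x$, is exactly Proposition~\ref{downmonotone} applied with the role of $w$ there played by $w^{-1}$, using $x\geqslant u$. Thus the only real content is the middle inequality $U_{w^{-1}}\downarrow x\leqslant y$, and this is the only place where the hypothesis $\Theta(x,y,w)\neq 0$ enters.

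The first step is to convert the hypothesis $\Theta(x,y,w)\neq 0$ into the Bruhat-order statement $xy^{-1}\leqslant w$. This is the contrapositive of the last assertion of Proposition~\ref{easycasestheta}(i); at bottom it holds because $\Theta(x,y,w)=\Lambda_w(T_xT_{y^{-1}})$, every element of $\supp(T_xT_{y^{-1}})$ is $\geqslant xy^{-1}$ by Proposition~\ref{supptt}, and $\Lambda_w$ annihilates $T_z$ for $z\not\leqslant w$. With this in hand the lemma is reduced to the purely combinatorial implication
\[ xy^{-1}\leqslant w \;\Longrightarrow\; U_{w^{-1}}\downarrow x \leqslant y. \]

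To prove this I would pass to Demazure products via (\ref{downdemazure}), which gives $U_{w^{-1}}\downarrow x=(w^{-1}\circ(xw_0))w_0$. Since $a\mapsto aw_0$ reverses the Bruhat order, the target $U_{w^{-1}}\downarrow x\leqslant y$ is equivalent to $w^{-1}\circ(xw_0)\geqslant yw_0$. Now take inverses in $xy^{-1}\leqslant w$ to get $yx^{-1}\leqslant w^{-1}$; Proposition~\ref{monotonedemazure} then yields $(yx^{-1})\circ(xw_0)\leqslant w^{-1}\circ(xw_0)$, while Proposition~\ref{supptt} gives $(yx^{-1})(xw_0)\leqslant(yx^{-1})\circ(xw_0)$. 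Since $(yx^{-1})(xw_0)=yw_0$, concatenating these inequalities produces $yw_0\leqslant w^{-1}\circ(xw_0)$, which is exactly what was wanted.

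I do not anticipate an essential obstacle: once one sees to split off the two formal inequalities and to invoke Proposition~\ref{easycasestheta}(i), what remains is routine manipulation of the Demazure product, inverses, and the $w_0$-twist in the formula for $\downarrow$. The one place to be careful is the direction bookkeeping --- remembering that $g\mapsto g^{-1}$ preserves $\leqslant$ whereas $g\mapsto gw_0$ reverses it --- so that the final chain of inequalities closes up in the correct sense.
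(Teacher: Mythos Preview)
Your proof is correct. For the middle inequality $U_{w^{-1}}\downarrow x\leqslant y$, the paper takes a slightly different (and marginally shorter) path: after obtaining $yx^{-1}\leqslant w^{-1}$ it observes that $y\in[1,w^{-1}]x$ and then invokes Theorem~\ref{translatemin} directly, which identifies $U_{w^{-1}}\downarrow x$ as the minimal element of $[1,w^{-1}]x$. Your route instead unwinds $U_{w^{-1}}\downarrow x$ via (\ref{downdemazure}) and finishes with Proposition~\ref{monotonedemazure} together with the inequality $ab\leqslant a\circ b$ extracted from Proposition~\ref{supptt}; this amounts to re-deriving the relevant fragment of Theorem~\ref{translatemin} by hand. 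Either approach is fine: the paper's is more packaged, yours more self-contained.
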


\begin{proof}
  By Proposition~\ref{supptt}, $\Theta (x, y, w) = 0$ unless $w \leqslant x
  y^{- 1}$. This means that $y x^{- 1} \leqslant w^{- 1}$, that is, $y \in [1,
  w^{- 1}] x$. By Theorem~\ref{translatemin}, the set $[1, w^{- 1}] x$ has a
  unique minimal element $U_{w^{- 1}} \downarrow x$. Thus we obtain $U_{w^{-
  1}} \downarrow x \leqslant y \leqslant v$. The remaining inequality $U_{w^{-
  1}} \downarrow u \leqslant U_{w^{- 1}} \downarrow x$ follows from
  Proposition~\ref{downmonotone}.
\end{proof}
Our next result shows that with $u,w$ fixed, $v=U_{w^{-1}}\downarrow u$ 
is the smallest $v$ such that $\sigma(u,v,w)\neq 0$, and moreover
this value is a polynomial in $q$. We define
\[\sigma_0(u,w)=\sigma(u,U_{w^{-1}}\downarrow u,w).\]
This partially proves Theorem~\ref{maintheorem}.

\begin{proposition}
  \label{firstpart}
  We have $\sigma (u, v, w) = 0$ unless $v \geqslant U_{w^{- 1}} \downarrow
  u$. In the minimal case where $v = U_{w^{- 1}} \downarrow u$, the
  value $\sigma_0 (u, w)$ is a
  polynomial in $q$ (independent of $\mathbf{z}$).
\end{proposition}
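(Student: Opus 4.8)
The plan is to extract both assertions directly from the closed formula~(\ref{sigmabnform}) for $\sigma(u,v,w)$, together with the support estimate of Lemma~\ref{thetanv}. For the vanishing statement, note that a summand of~(\ref{sigmabnform}) indexed by a pair $(x,y)$ with $x\geqslant u$ and $y\leqslant v$ contributes nothing unless $\Theta(x,y,w)\neq 0$, and in that case Lemma~\ref{thetanv} gives the chain
\[ U_{w^{-1}}\downarrow u \;\leqslant\; U_{w^{-1}}\downarrow x \;\leqslant\; y \;\leqslant\; v . \]
In particular $U_{w^{-1}}\downarrow u\leqslant v$ is necessary for any nonzero summand, so if $v\not\geqslant U_{w^{-1}}\downarrow u$ then every term of~(\ref{sigmabnform}) vanishes and $\sigma(u,v,w)=0$.

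Now put $v_0:=U_{w^{-1}}\downarrow u$ and examine $\sigma_0(u,w)=\sigma(u,v_0,w)$. Again only pairs $(x,y)$ with $\Theta(x,y,w)\neq 0$ matter, and for these the chain above holds with $v=v_0$. But $x\geqslant u$ combined with Proposition~\ref{downmonotone} forces $U_{w^{-1}}\downarrow x\geqslant U_{w^{-1}}\downarrow u=v_0$, so squeezing $v_0\leqslant U_{w^{-1}}\downarrow x\leqslant y\leqslant v_0$ shows every surviving term has $y=v_0$ (and $U_{w^{-1}}\downarrow x=v_0$). Therefore
\[ \sigma_0(u,w)=q^{-\ell(v_0)}\,\overline{r_{v_0,v_0}}(\mathbf{z})\!\!\sum_{\substack{x\geqslant u\\ U_{w^{-1}}\downarrow x=v_0}}\!\!\Theta(x,v_0,w). \]
Since $r_{v_0,v_0}=1$ by Theorem~\ref{rrecursion}, we have $\overline{r_{v_0,v_0}}=1$ and all dependence on $\mathbf{z}$ disappears; what remains is $q^{-\ell(v_0)}$ times a finite sum of the quantities $\Theta(x,v_0,w)$, each visibly a polynomial in $q$, hence a polynomial in $q$. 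I would also remark that $x=u$ itself lies in the index set, and since $uv_0^{-1}\leqslant w$ — which follows from $v_0=\min([1,w^{-1}]u)$, Theorem~\ref{translatemin} — Proposition~\ref{easycasestheta}(i) shows $\Theta(u,v_0,w)$ evaluates to $1$ at $q=1$; summing these observations shows $\sigma_0(u,w)\neq 0$, so that $v_0$ really is the smallest $v$ with $\sigma(u,v,w)\neq 0$, although only the weaker statement is needed for Proposition~\ref{firstpart} itself.

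There is no genuinely hard step here; the proof is a matter of stitching together facts already in place. The one point requiring care is the two-sided sandwich used to pin down $y=v_0$: the lower bound $U_{w^{-1}}\downarrow x\geqslant v_0$ must be obtained from monotonicity of $v\mapsto U_{w^{-1}}\downarrow v$ in its \emph{argument} (Proposition~\ref{downmonotone}), while Lemma~\ref{thetanv} supplies the upper bound $U_{w^{-1}}\downarrow x\leqslant y$; one must not confuse this with the reverse monotonicity in the subscript (Proposition~\ref{downmonotonebis}), so keeping the roles of $x$ and $w$ straight is the whole subtlety.
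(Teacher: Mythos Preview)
Your argument for the vanishing statement and for the independence of $\mathbf{z}$ is exactly the paper's: use Lemma~\ref{thetanv} on each summand of~(\ref{sigmabnform}), then in the minimal case squeeze $v_0\leqslant U_{w^{-1}}\downarrow x\leqslant y\leqslant v_0$ to force $y=v_0$, whence $r_{y,v}=r_{v_0,v_0}=1$ kills all $\mathbf{z}$-dependence.

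However, there is a genuine gap in the final step. You conclude with ``what remains is $q^{-\ell(v_0)}$ times a finite sum of the quantities $\Theta(x,v_0,w)$, each visibly a polynomial in $q$, hence a polynomial in $q$.'' This is a non sequitur: $q^{-\ell(v_0)}$ times a polynomial is a priori only a \emph{Laurent} polynomial in $q$, and the proposition asserts that $\sigma_0(u,w)$ is an honest polynomial. The paper takes this point seriously and proves the required divisibility. Since $v_0=U_{w^{-1}}\downarrow u\leqslant_L u$ (Lemma~\ref{lemseven}), for any $x\geqslant u$ in the surviving index set one has $xv_0^{-1}\in[u,w_0]v_0^{-1}$, so Theorem~\ref{translatemin} gives $xv_0^{-1}\geqslant u\downarrow U_{v_0^{-1}}=uv_0^{-1}$, hence $\ell(xv_0^{-1})\geqslant\ell(u)-\ell(v_0)$. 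Feeding this into Proposition~\ref{thetalower} yields
\[
\tfrac12\bigl(\ell(x)+\ell(v_0)+\ell(xv_0^{-1})\bigr)\;\geqslant\;\tfrac12\bigl(\ell(u)+\ell(v_0)+\ell(u)-\ell(v_0)\bigr)\;=\;\ell(u)\;\geqslant\;\ell(v_0),
\]
so each $\Theta(x,v_0,w)$ is divisible by $q^{\ell(v_0)}$ and the quotient is a polynomial. Without this step your proof establishes only that $\sigma_0(u,w)$ is a Laurent polynomial in $q$ independent of~$\mathbf{z}$.
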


\begin{proof}
  We make use of (\ref{sigmabnform}). With $x\geqslant u$, $y\leqslant v$,
  by Lemma~\ref{thetanv}, $\Theta (x, y, w) = 0$ unless (\ref{thetanvcondition})
  is satisfied. Therefore $\sigma(u,v,w)=0$ unless $U_{w^{-1}}\downarrow u \leqslant v$,
  proving the first assertion. 

  Moreover if $v=U_{w^{- 1}}\downarrow u$, then
  (\ref{thetanvcondition}) implies that $y=v$. Therefore $r_{y,v}=1$.
  Also by Proposition~\ref{supptt}, $\Theta(x,v,w)$ vanishes unless
  $xv^{-1}\leqslant w$.
  Hence (\ref{sigmabnform}) simplifies to 
  \begin{equation}
  \label{sigmao}
  \sigma_0 (u, w) = q^{- \ell(v)} \sum_{\substack{x \geqslant u\\xv^{-1}\leqslant w}} \Theta (x, v, w) ,\qquad v=U_{w^{-1}}\downarrow u.
  \end{equation}

  Since $\Theta(x,v,w)$ does not involve $\z$, it is a polynomial in~$q$. 
  But since we are dividing by $q^{\ell(v)}$ in (\ref{sigmao}), we
  must show that $\Theta(x,v,w)$ is divisible by $q^{\ell(v)}$.
  Actually we will show that it is divisible by
  $q^{\ell(u)}$, a stronger result since $v\leqslant_L u$.
  With $x\geqslant u$ we have $xv^{-1}\in[u,w_0]v^{-1}$ so
  by Theorem~\ref{translatemin} we have
  \[xv^{-1}\geqslant u\downarrow U_{v^{-1}}=uv^{-1}\]
  where the last equality follows since $v\leqslant_L u$.
  Now we apply Proposition~\ref{thetalower} and use
  \[\frac12\bigl(\ell(x)+\ell(v)+\ell(xv^{-1})\bigr)\geqslant
  \frac12\bigl(\ell(u)+\ell(v)+\ell(uv^{-1})\bigr)=\ell(u).\]
\end{proof}

If $\alpha$ is a positive root, let $r_{\alpha} \in W$ be the associated
reflection. If $u \leqslant v$ in the Bruhat order, define:
\begin{equation}
 \label{suvdwf} S (u, v) = \{ \alpha \in \Phi^+ |u \leqslant vr_{\alpha} <
   v \} .
\end{equation}
We remind the reader of the Deodhar inequality (\ref{deodharineq}),
valid if the inverse Kazhdan-Lusztig polynomial $Q_{u, v} : = P_{w_0
v, w_0 u}$ equals $1$. We will generalize this definition slightly and denote
\begin{equation}
 \label{suvwdef} S (u, v, w) : = S (U_{w^{- 1}} \downarrow u, v) = \{ \alpha
   \in \Phi^+ |U_{w^{- 1}} \downarrow u \leqslant vr_{\alpha} < v \} . 
\end{equation}

Bump and Nakasuji conjectured, and Aluffi, Mihalcea, Sch{\"u}rmann and Su
proved the following result.

\begin{theorem}
  \label{poledet}Let $u \leqslant v$. Then
  \[ r_{u, v} \prod_{\alpha \in S (u, v)} (1 -\mathbf{z}^{\alpha}) \]
  is analytic for all $\mathbf{z} \in \hat{T} (\mathbb{C})$.
\end{theorem}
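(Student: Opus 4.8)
The plan is to argue by induction on $\ell(v)$, using the recursion of Theorem~\ref{rrecursion} to propagate a bound on the pole divisor of $r_{u,v}$; alternatively one may simply quote \cite{AMSS}, and I will point out below the genuine difficulty that the combinatorial route meets and that the argument of \cite{AMSS} circumvents geometrically. Since $r_{u,v}=0$ unless $u\leqslant v$, I would first assume $u\leqslant v$; the case $v=1$ is immediate, for then $u=1$, $S(u,v)=\emptyset$, and $r_{1,1}=1$. For the inductive step, pick a simple reflection $s=s_i$ with simple root $\alpha=\alpha_i$ and $sv<v$, and set $\beta=-v^{-1}\alpha$. A direct computation in the reflection representation shows that $\beta\in\Phi^+$, that $v\,r_\beta=sv$ and $sv\,r_\beta=v$, and that the right inversion set $\operatorname{Inv}_R(v)=\{\gamma\in\Phi^+\mid v\,r_\gamma<v\}$ decomposes as $\operatorname{Inv}_R(v)=\operatorname{Inv}_R(sv)\sqcup\{\beta\}$; in particular $\beta\in S(u,v)$ if and only if $u\leqslant sv$, if and only if $r_{u,sv}\neq0$.

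The first real task is a recursion for the sets $S(\cdot,\cdot)$ keyed to the two cases of Theorem~\ref{rrecursion}. When $su>u$ this is clean: the lifting property of the Bruhat order (\cite[Prop.~2.2.7]{BjornerBrenti}) gives $u\leqslant sv$, and a short argument with the subword characterization of $\leqslant$ shows $u\leqslant v\,r_\gamma\Leftrightarrow u\leqslant sv\,r_\gamma$ for every $\gamma\in\operatorname{Inv}_R(sv)$, so that $S(u,v)=S(u,sv)\sqcup\{\beta\}$. When $su<u$ the relationship is subtler: one always has $S(u,sv)\subseteq S(su,sv)$, and I would extract the precise statement by the same bookkeeping, separating the cases according to whether $s$ is a left descent of $sv\,r_\gamma$ for $\gamma\in\operatorname{Inv}_R(sv)$; in the degenerate situation $u\not\leqslant sv$ the recursion collapses to $r_{u,v}=r_{su,sv}$ and one need only check $S(su,sv)\subseteq S(u,v)$.

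With these identities available, the temptation is to finish by bounding the pole divisor of $r_{u,v}$ term by term against the two summands of the recursion. This fails: cancellation between the two terms is essential. For instance, in type $A_2$ with $s=s_1$, $v=w_0$, $u=s_1$, the recursion reads $r_{s_1,w_0}=\frac{1-q}{1-\mathbf{z}^{\alpha_2}}\,r_{s_1,s_2s_1}+r_{1,s_2s_1}$, and both $r_{s_1,s_2s_1}=(1-q)\frac{\mathbf{z}^{\alpha_1+\alpha_2}}{1-\mathbf{z}^{\alpha_1+\alpha_2}}$ and $r_{1,s_2s_1}=(1-q)^2\frac{\mathbf{z}^{2\alpha_1+\alpha_2}}{(1-\mathbf{z}^{\alpha_1})(1-\mathbf{z}^{\alpha_1+\alpha_2})}$ have a pole along $\mathbf{z}^{\alpha_1+\alpha_2}=1$, yet this pole is absent from $r_{s_1,w_0}=(1-q)^2\frac{\mathbf{z}^{\alpha_1+\alpha_2}}{(1-\mathbf{z}^{\alpha_1})(1-\mathbf{z}^{\alpha_2})}$, as it must be since $\alpha_1+\alpha_2\notin S(s_1,w_0)=\{\alpha_1,\alpha_2\}$; the cancellation is forced by the root identity $\mathbf{z}^{\alpha_1}\mathbf{z}^{\alpha_2}=\mathbf{z}^{\alpha_1+\alpha_2}$. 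So I would strengthen the induction to carry a numerator, proving simultaneously that $r_{u,v}=N_{u,v}(\mathbf{z},q)/\prod_{\alpha\in S(u,v)}(1-\mathbf{z}^\alpha)$ with $N_{u,v}$ a Laurent polynomial satisfying an explicit recursion. The hard part will be exactly the verification, for every root $\gamma\in\operatorname{Inv}_R(sv)$ not lying in $S(u,v)$, that the residue of the right-hand side of the recursion along $\mathbf{z}^\gamma=1$ vanishes: a congruence among $N_{u,sv}$ and $N_{su,sv}$ modulo $1-\mathbf{z}^\gamma$ that encodes precisely the cancellations above. Carrying this out uniformly across root systems is the real obstacle, and it is what \cite{AMSS} bypasses by reinterpreting $r_{u,v}$, up to a monomial, as a localized motivic Chern class of a Schubert cell, for which the sought regularity becomes a geometric statement about how the localization behaves near the torus-fixed point indexed by $u$.
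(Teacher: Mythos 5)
Your proposal ultimately defers to \cite{AMSS} Theorem~10.4, which is exactly what the paper's proof does, and your extended discussion of why the recursive route via Theorem~\ref{rrecursion} fails to control the denominator — that cancellations between the two terms of the recursion are essential and hard to prove uniformly — mirrors the paper's own remark immediately following the proof. So this is the same approach; the concrete $A_2$ illustration of the cancellation along $\mathbf{z}^{\alpha_1+\alpha_2}=1$ is a nice addition but not a departure.
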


\begin{proof}
  See {\cite{AMSS}}, Theorem~10.4.
\end{proof}

We note the difficulty of Theorem~\ref{poledet}. An obvious approach is to
prove this recursively from Theorem~\ref{rrecursion}. This is the basis of
partial results in~{\cite{BumpNakasujiKL}}. However this does not produce
exactly the right set of roots $\alpha$ such that $1 -\mathbf{z}^{\alpha}$
appears in the denominator, because some roots that appear in the terms of the
recursive formula actually cancel; these cancellations are not easy to
prove.

\begin{theorem}
  The function
  \[ \sigma (u, v, w) \prod_{\alpha \in S (u, v, w)} (1
     -\mathbf{z}^{\alpha}) \]
  is analytic for all $\mathbf{z} \in \hat{T} (\mathbb{C})$.
\end{theorem}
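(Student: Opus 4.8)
The plan is to combine the explicit formula \eqref{sigmabnform} with Theorem~\ref{poledet}, being careful about the mismatch between the two root sets $S(y,v)$ and $S(u,v,w)=S(U_{w^{-1}}\downarrow u,v)$. Recall from \eqref{sigmabnform} that
\[
\sigma(u,v,w)=\sum_{\substack{x\geqslant u\\ y\leqslant v}} q^{-\ell(y)}\,\Theta(x,y,w)\,\overline{r_{y,v}}(\mathbf z),
\]
and that $\Theta$ is a polynomial in $q$ with no $\mathbf z$-dependence. So all poles of $\sigma(u,v,w)$ in $\mathbf z$ come from the factors $\overline{r_{y,v}}(\mathbf z)$. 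By Theorem~\ref{poledet}, $r_{y,v}\prod_{\alpha\in S(y,v)}(1-\mathbf z^\alpha)$ is analytic, and hence so is $\overline{r_{y,v}}\prod_{\alpha\in S(y,v)}(1-q^{-1}\mathbf z^\alpha)$ — but note the ``bar'' replaces $q$ by $q^{-1}$ only in the \emph{numerator} polynomial $r_{y,v}$, while the denominator factors $1-\mathbf z^\alpha$ came from clearing poles and are unaffected; so in fact $\overline{r_{y,v}}\prod_{\alpha\in S(y,v)}(1-\mathbf z^\alpha)$ is what is analytic. Thus it suffices to show that for every $y$ appearing with a nonzero term in the sum (i.e. $y\leqslant v$ and $\Theta(x,y,w)\neq 0$ for some $x\geqslant u$), we have the inclusion of root sets $S(y,v)\subseteq S(U_{w^{-1}}\downarrow u,\,v)=S(u,v,w)$.

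First I would unwind what ``$y$ contributes'' means. By Lemma~\ref{thetanv}, if $x\geqslant u$, $y\leqslant v$ and $\Theta(x,y,w)\neq 0$, then
\[
U_{w^{-1}}\downarrow u\ \leqslant\ U_{w^{-1}}\downarrow x\ \leqslant\ y\ \leqslant\ v.
\]
So every contributing $y$ satisfies $U_{w^{-1}}\downarrow u\leqslant y\leqslant v$. The key step is then the monotonicity of the root set $S(-,v)$ in its first argument along Bruhat intervals below $v$: if $a\leqslant b\leqslant v$ then $S(a,v)\supseteq S(b,v)$. This is immediate from the definition \eqref{suvdwf}: $\alpha\in S(b,v)$ means $b\leqslant vr_\alpha<v$, and $a\leqslant b$ then gives $a\leqslant vr_\alpha<v$, i.e. $\alpha\in S(a,v)$. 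Applying this with $a=U_{w^{-1}}\downarrow u$ and $b=y$ (both $\leqslant v$, and $a\leqslant b$ by Lemma~\ref{thetanv}) gives $S(y,v)\subseteq S(U_{w^{-1}}\downarrow u,v)=S(u,v,w)$, exactly as needed.

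Putting the pieces together: write $P=\prod_{\alpha\in S(u,v,w)}(1-\mathbf z^\alpha)$. For each contributing pair $(x,y)$, the factor $P$ contains $\prod_{\alpha\in S(y,v)}(1-\mathbf z^\alpha)$ as a sub-product, so $\overline{r_{y,v}}(\mathbf z)\,P$ is a polynomial multiple of $\overline{r_{y,v}}(\mathbf z)\prod_{\alpha\in S(y,v)}(1-\mathbf z^\alpha)$, hence analytic on all of $\hat T(\mathbb C)$. Multiplying through \eqref{sigmabnform} by $P$ and noting $q^{-\ell(y)}\Theta(x,y,w)$ is a Laurent polynomial in $q$ (constant in $\mathbf z$), we conclude that $\sigma(u,v,w)\,P$ is a finite sum of analytic functions, hence analytic. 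I do not expect a serious obstacle here; the only point requiring care is the bookkeeping around the bar operation — confirming that Theorem~\ref{poledet} really does control the $\mathbf z$-poles of $\overline{r_{y,v}}$ with the \emph{same} root set $S(y,v)$ (the denominator factors $1-\mathbf z^\alpha$ are untouched by $q\mapsto q^{-1}$, so this is fine), together with checking that $\sigma$ genuinely has no other source of $\mathbf z$-dependence, which is clear from \eqref{sigmabnform} since $\Theta$ is a polynomial in $q$ alone.
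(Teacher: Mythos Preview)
Your proof is correct and follows essentially the same approach as the paper: use \eqref{sigmabnform} together with Theorem~\ref{poledet}, invoke Lemma~\ref{thetanv} to get $U_{w^{-1}}\downarrow u\leqslant y$ for every contributing $y$, and then use the evident monotonicity $S(y,v)\subseteq S(U_{w^{-1}}\downarrow u,v)$ from the definition \eqref{suvdwf}. You are in fact more explicit than the paper on two points it leaves implicit---the monotonicity of $S(-,v)$ and the fact that the bar operation $q\mapsto q^{-1}$ does not affect the $\mathbf{z}$-denominators---so your write-up is if anything a bit more careful.
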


\begin{proof}
  We make use of (\ref{sigmabnform}) and Theorem~\ref{poledet}. We see that
  the possible roots $\alpha$ such that $1 -\mathbf{z}^{\alpha}$ appears in
  the denominator lie in
  \[ \bigcup_{\substack{ x \geqslant u\\
       y \leqslant v\\
       \Theta (x, y, w) \neq 0}}
    S (y, v) . \]
  Now by Lemma~\ref{thetanv}, we must have $U_{w^{- 1}} \downarrow u \leqslant
  y$ in this union, so $S (y, v) \leqslant S (u, v, w)$ from the
  definition~(\ref{suvwdef}).
\end{proof}

\section{Proof of Theorem~\ref{maintheorem}}

Let $u, w \in W$. In this section we will take $v = v_{\min} (u, w) = U_{w^{-
1}} \downarrow u$ and prove Theorem~\ref{maintheorem}.

We will change our notation slightly for simple reflections. In previous
sections we denoted by $\{ s_1, \cdots, s_r \}$ the set of simple reflections,
so a reduced word for a Weyl group element $v$ would be written $s_{i_1}
\cdots s_{i_k}$ where $\{ i_1, \cdots, i_k \}$ is some sequence of indices. In
the following arguments this notation would be cumbersome, so we dispense with
the double subscripts and write a reduced expression for $v$ as $s_1 \cdots
s_k$. Thus $k = \ell (v)$.

Let $m = u v^{- 1}$. Then $m$ is the ``mixed meet'' of
Theorem~\ref{thm:mixedmeet}, that is, the maximal element of $w$ such that $m
\leqslant_R u$ and $m < w$. We will write $u = m v = m s_1 \cdots s_k$.
Because $v = U_{w^{- 1}} \downarrow u$ we have $v \leqslant_L u$, that is
$\ell (u) = \ell (m) + \ell (v)$. Therefore
\begin{equation}
  \label{uchain} m < m s_1 < m s_1 s_2 < \cdots < m s_1 \cdots s_k = u.
\end{equation}
We will denote
\[ \llbracket u, w \rrbracket = \{ z \in W|u \leqslant z, z v^{- 1} \leqslant w \} . \]

\begin{figure}[h]
\[\begin{tikzpicture}
\draw (0,0) -- (-3,3);
\draw (1,1) -- (-2,4);
\draw (2,2) -- (-1,5);
\draw (0,0) -- (2,2);
\draw (-1,1) -- (1,3);
\draw (-2,2) -- (0,4);
\draw (-3,3) -- (-1,5);
\path[fill=white] (0,0) circle (.3);
\path[fill=white] (-1,1) circle (.3);
\path[fill=white] (-2,2) circle (.3);
\path[fill=white] (-3,3) circle (.3);
\path[fill=white] (1,1) circle (.3);
\path[fill=white] (0,2) circle (.3);
\path[fill=white] (-1,3) circle (.3);
\path[fill=white] (-2,4) circle (.3);
\path[fill=white] (2,2) circle (.3);
\path[fill=white] (1,3) circle (.3);
\path[fill=white] (0,4) circle (.3);
\path[fill=white] (-1,5) circle (.3);
\node at (0,0) {$m$};
\node at (-1,1) {$ms_1$};
\node at (-2,2) {$ms_1s_2$};
\node at (-3.25,3){$u=ms_1s_2s_3$};
\node at (1,1) {$zs_3s_2s_1$};
\node at (0,2) {$zs_3s_2$};
\node at (-1,3) {$zs_3$};
\node at (-2,4){$z$};
\node at (2,2) {$w$};
\node at (1,3) {$ws_1$};
\node at (0,4) {$ws_1s_2$};
\node at (-1,5){$ws_1s_2s_3$};
\end{tikzpicture}\]
\caption{The Weyl group elements in Proposition~\ref{ladder} form a ladder.
Here is the case where $k=3$.}
\label{figladder}
\end{figure}
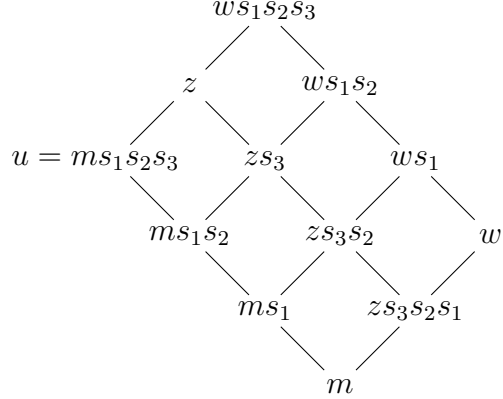

\begin{proposition}
  \label{ladder}Let $u, w \in W$ and let $v = v_{\min} (u, w)$. Then $\ell (w
  v) = \ell (w) + \ell (v)$. Moreover $\llbracket u, w \rrbracket = [u, w v]$.
  Let $k = \ell (v)$ and let $v = s_1 \cdots s_k$ be a reduced expression.
  Then
  \begin{equation}
    \label{keytow} w < w s_1 < w s_1 s_2 < \cdots < w s_1 \cdots s_k = w v.
  \end{equation}
  Suppose that $z \in [u, w v]$. Then
  \begin{equation}
    \label{zchain} z > z s_k > z s_k s_{k - 1} > \cdots > z s_k \cdots s_1 = z
    v^{- 1} .
  \end{equation}
  We have
  \begin{equation}
    \label{fullladder} u s_k \cdots s_{r + 1} = m s_1 \cdots s_r \leqslant z
    s_k \cdots s_{r + 1} \leqslant w s_1 \cdots s_r 
  \end{equation}
  for $0 \leqslant r \leqslant k$.
\end{proposition}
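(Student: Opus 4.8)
The plan is to reduce everything to two facts: the length identity $\ell(wv)=\ell(w)+\ell(v)$, and the single statement that $z\in[u,wv]$ implies $v\leqslant_L z$, i.e.\ $\ell(zv^{-1})=\ell(z)-\ell(v)$. First the cheap part: since $v=U_{w^{-1}}\downarrow u\leqslant_L u$, the product $u=ms_1\cdots s_k$ is reduced, which is exactly (\ref{uchain}) and gives the equalities $us_k\cdots s_{r+1}=ms_1\cdots s_r$ in (\ref{fullladder}). For the length identity I would use (\ref{downdemazure}) to write $v=U_{w^{-1}}\downarrow u=(w^{-1}\circ(uw_0))w_0$. Put $b=w^{-1}\circ(uw_0)$, so $b=vw_0$. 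Recall that $w^{-1}\leqslant_R w^{-1}\circ(uw_0)$ (each stage of a Demazure product either appends a simple reflection, raising the length by one, or does nothing, so $x\leqslant_R x\circ y$); this gives $\ell(b)=\ell(w)+\ell(wb)$. Substituting $\ell(b)=\ell(w_0)-\ell(v)$ and $\ell(wb)=\ell(wvw_0)=\ell(w_0)-\ell(wv)$ and simplifying yields $\ell(wv)=\ell(w)+\ell(v)$; since $s_1\cdots s_k$ is reduced, this is precisely the chain (\ref{keytow}).

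Next I would show the remaining assertions follow once we know $z\in[u,wv]\Rightarrow v\leqslant_L z$. Granting this, write $z=(zv^{-1})v$ with lengths adding; from $z\leqslant wv$, $\ell(wv)=\ell(w)+\ell(v)$, and the standard behaviour of the Bruhat order under reduced products (\cite{BjornerBrenti}), we get $zv^{-1}\leqslant w$, so $z\in\llbracket u,w\rrbracket$. The reverse inclusion $\llbracket u,w\rrbracket\subseteq[u,wv]$ is immediate from Theorem~\ref{translatemin}: $zv^{-1}\leqslant w$ says $z\in[1,w]v$, whose maximal element is $w\circ v=wv$. For (\ref{zchain}) and (\ref{fullladder}), the factorization $z=(zv^{-1})s_1\cdots s_k$ with lengths adding makes each partial product $zs_k\cdots s_{r+1}=(zv^{-1})s_1\cdots s_r$ reduced, which is the descending chain (\ref{zchain}); and $m\leqslant zv^{-1}\leqslant w$ — the lower bound because $z\geqslant u=ms_1\cdots s_k$ is again a reduced product, so right cancellation gives $zv^{-1}\geqslant m$ — propagates to $ms_1\cdots s_r\leqslant(zv^{-1})s_1\cdots s_r\leqslant ws_1\cdots s_r$, which is (\ref{fullladder}).

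The heart of the proof is the claim $z\in[u,wv]\Rightarrow v\leqslant_L z$, which I would prove by induction on $k=\ell(v)$, the case $k=0$ being trivial. For $k\geqslant1$ set $u'=us_k$, $v'=s_1\cdots s_{k-1}$; one checks that $m$ is still the mixed meet of $u'$ and $w$ (it is $\leqslant_R u'$ and $\leqslant w$, and any $x\leqslant_R u'$, $x\leqslant w$ has $x\leqslant_R u'\leqslant_R u$, hence $x\leqslant m$), so $v'=v_{\min}(u',w)$ and the inductive hypothesis applies to $(u',w)$. Given $z\in[u,wv]$: if $zs_k<z$, the lifting property (\cite{BjornerBrenti}, Prop.~2.2.7) puts $zs_k$ in $[u',wv']$ (using $us_k=u'<u$ and $(wv)s_k=wv'<wv$), so by induction $v'\leqslant_L zs_k$, whence $\ell(zv^{-1})=\ell((zs_k)v'^{-1})=\ell(zs_k)-(k-1)=\ell(z)-k$ and $v\leqslant_L z$. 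So it suffices to rule out $zs_k>z$. If $zs_k>z$, the lifting property gives $z\leqslant(wv)s_k=wv'$, and with $z\geqslant u$ this forces $u\leqslant wv'$. But $wv'=w\cdot v'$ is a reduced product, so every element $\leqslant wv'$ factors as $ab$ with $a\leqslant w$, $b\leqslant v'$, and $\ell(ab)=\ell(a)+\ell(b)$ (split a subword realizing it across the two halves of the reduced word consisting of a reduced word for $w$ followed by $s_1\cdots s_{k-1}$); applied to $u$ this produces $a\leqslant_R u$ with $a\leqslant w$ and $\ell(a)=\ell(u)-\ell(b)\geqslant\ell(u)-(k-1)=\ell(m)+1$, contradicting the maximality of $m$ in $\{x:x\leqslant_R u,\ x\leqslant w\}$.

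The main obstacle is exactly that last step. An element wedged between two elements each admitting $s_k$ as a right descent need not itself admit $s_k$ as a right descent, so (\ref{zchain}) cannot be obtained by a naive lifting argument; the trick is to recast ``$zs_k>z$'' as the inclusion $u\leqslant wv'$ and then play the reduced factorization of the lower Bruhat interval $[1,wv']$ against the extremal characterization of the mixed meet $m$. Everything else is bookkeeping with length identities, the subword property, and the compatibility of the Bruhat order with reduced products.
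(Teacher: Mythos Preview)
Your proof is correct and takes a genuinely different route from the paper's argument.

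The paper proceeds by a two-pass ``ladder'' induction: first an upward induction on $r$ to establish (\ref{keytow}) and the right-hand inequality in (\ref{fullladder}), then a downward induction on $r$ to establish (\ref{zchain}) and the middle inequality in (\ref{fullladder}). Each inductive step combines the lifting property with a contradiction obtained by explicitly computing two values of the $\downarrow$ operation (via Proposition~\ref{downmonotonebis}) and finding them incompatible. The length identity $\ell(wv)=\ell(w)+\ell(v)$ is not proved directly but falls out of the chain (\ref{keytow}).

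Your argument reorganizes the logic around two pivots. First, you obtain $\ell(wv)=\ell(w)+\ell(v)$ in one stroke from the identity $v=(w^{-1}\circ(uw_0))w_0$ together with $x\leqslant_R x\circ y$; this is slicker than the paper's step-by-step ascent. Second, you reduce all the remaining assertions to the single implication $z\in[u,wv]\Rightarrow v\leqslant_L z$, and prove that by induction on $k=\ell(v)$. The crucial point, ruling out $zs_k>z$, is handled by the subword factorization of $[1,wv']$ combined with the maximality of the mixed meet $m$: the factor $a\leqslant_R u$ with $a\leqslant w$ that you extract from $u\leqslant wv'$ would beat $m$ in length. This uses Theorem~\ref{thm:mixedmeet} as the real engine, whereas the paper's contradictions bypass the mixed meet and appeal instead to explicit $\downarrow$ computations. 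Your approach is arguably more conceptual and makes the role of the mixed meet more visible; the paper's approach is more self-contained at the level of the $\downarrow$ calculus and does not need the subword factorization of lower intervals under reduced products. A minor remark: the step ``$z\leqslant wv$, $v\leqslant_L z$, $v\leqslant_L wv$ $\Rightarrow$ $zv^{-1}\leqslant w$'' that you cite as standard is indeed an iterated application of the lifting property (both $z$ and $wv$ have $s_k$ as a right descent, so cancel and repeat), but it is worth spelling out since it is exactly the right-cancellation step you also use for $m\leqslant zv^{-1}$.
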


\begin{proof}
  The inequalities asserted by the Proposition may be envisioned
  as forming a ladder, as in Figure~\ref{figladder}. We will
  ascend the ladder to prove the inequalities satisfied by
  the $ws_1\cdots s_r$, then descend the ladder to prove
  the inequalities satisfied by the $zs_ks_{k-1}\cdots s_{r+1}$.
  
  Thus we will by induction for $0 \leqslant r \leqslant k$ that
  \begin{equation}
    \label{keystp} m s_1 \cdots s_r \leqslant w s_1 \cdots s_r .
  \end{equation}
  If $r = 0$ this is true since $m = u v^{- 1} < w$. Arguing inductively,
  assume that this is true for $r < k$; we will prove that it is true for $r +
  1$. The first step is to show
  \begin{equation}
    \label{uwfs} w s_1 \cdots s_r < w s_1 \cdots s_r s_{r + 1} .
  \end{equation}
  If not, $w s_1 \cdots s_r < w s_1 \cdots s_r s_{r + 1} .$ Now since $m s_1
  \cdots s_r \leqslant w s_1 \cdots s_r$ and $m s_1 \cdots s_r < m s_1 \cdots
  s_r s_{r + 1}$, the lifting property of the Bruhat order implies that
  \[ m s_1 \cdots s_r s_{r + 1} \leqslant w s_1 \cdots s_r . \]
  Using the monotonicity property Proposition~\ref{downmonotonebis} we then
  have
  \[ U_{(w s_1 \cdots s_r)^{- 1}} \downarrow u \leqslant U_{(m s_1 \cdots s_{r
     + 1})^{- 1}} \downarrow u. \]
  However
  \[ U_{(w s_1 \cdots s_r)^{- 1}} \downarrow u = U_{(s_1 \cdots s_r)^{- 1}}
     \downarrow U_{w^{- 1}} \downarrow u = U_{(s_1 \cdots s_r)^{- 1}}
     \downarrow v = s_{r + 1} \cdots s_k\,, \]
  while
  \[ U_{(m s_1 \cdots s_{r + 1})^{- 1}} \downarrow u = U_{(m s_1 \cdots s_{r +
     1})^{- 1}} \downarrow m s_1 \cdots s_k = s_{r + 2} \cdots s_k\,, \]
  which is interpreted as $1$ if $k = r - 1$. Thus we have proved that $s_{r
  + 1} \cdots s_k \leqslant s_r \cdots s_k$, which is a contradiction.
  Therefore we have proved~(\ref{uwfs}).
  
  Now using (\ref{keystp}) and (\ref{uwfs}) and the inequality $m s_1 \cdots
  s_r < m s_1 \cdots s_{r + 1}$ from (\ref{uchain}), the lifting property of
  the Bruhat order implies that $m s_1 \cdots s_{r + 1} \leqslant w s_1 \cdots
  s_{r + 1}$, which is (\ref{keystp}) for $r + 1$. This completes the
  induction, so we have now proved (\ref{keystp}) for all $r$.
  
  Note that we have also proved (\ref{keytow}) from the
  inequalities~(\ref{uwfs}).
  
  Now let $z \in [u, w v]$. We now refine (\ref{keystp}) to the inequality
  (\ref{fullladder}). This time we argue by {\textit{downwards}} induction, the
  initial case being $r = k$, where (\ref{fullladder}) becomes our assumption
  $u \leqslant z \leqslant w v$. Assuming (\ref{fullladder}) for $r > 0$, we
  prove it for $r - 1$. First we need to show that
  \begin{equation}
    \label{zdescent} z s_k \cdots s_{r + 1} > z s_k \cdots s_r
  \end{equation}
  If not, $z s_k \cdots s_{r + 1} < z s_k \cdots s_r$. We have also $z s_k
  \cdots s_{r + 1} \leqslant w s_1 \cdots s_r$ by (\ref{fullladder}) and $w
  s_1 \cdots s_r > w s_1 \cdots s_{r - 1}$, so by the lifting property of the
  Bruhat order we obtain $z s_k \cdots s_{r + 1} \leqslant w s_1 \cdots s_{r -
  1}$, and using the first inequality in (\ref{fullladder}) we get $m s_1
  \cdots s_r \leqslant w s_1 \cdots s_{r - 1}$. Now using
  Proposition~\ref{downmonotonebis} we have
  \[ U_{(w s_1 \cdots s_{r - 1})^{- 1}} \downarrow u \leqslant U_{(m s_1
     \cdots s_r)} \downarrow u, \]
  that is $s_r \cdots s_k \leqslant s_{r + 1} \cdots s_k$, which is a
  contradiction, proving (\ref{zdescent}). Now using (\ref{zdescent}),
  (\ref{fullladder}) and (\ref{keytow}), the lifting property of the Bruhat
  order implies that $u s_k \cdots s_{r + 1} = m s_1 \cdots s_r \leqslant z
  s_k \cdots s_{r + 1} \leqslant w s_1 \cdots s_r$ which is a contradiction.
  Therefore we have proved (\ref{zdescent}).
  
  Given (\ref{zdescent}), (\ref{fullladder}), (\ref{uchain}) and
  (\ref{keytow}), both inequalities in
  \[ u s_k \cdots s_r = m s_1 \cdots s_{r - 1} \leqslant z s_k \cdots s_r
     \leqslant w s_1 \cdots s_{r - 1} \]
  follow from the lifting property of the Bruhat order. This is
  (\ref{fullladder}) for $r - 1$, completing the proof of (\ref{fullladder})
  by induction. Note that we have also proved (\ref{zchain}).
  
  Taking $k = 1$ in (\ref{fullladder}) gives $z v^{- 1} \leqslant w$.
  Therefore $\llbracket u, w \rrbracket \subseteq [u, w v]$. To prove the
  opposite inclusion, assume that $z \in \llbracket u, w \rrbracket$. We must
  show that $z \leqslant w v$. Indeed $z = (z v^{- 1}) v \leqslant w \circ v$.
  But $w \circ v = w v$ follows from (\ref{keytow}), and so we are done.
\end{proof}

\begin{lemma}
  \label{ionlyv}Suppose that $v = v_{\min} (u, w)$ and that $z \in [u, w v]$.
  Suppose furthermore that $v' \leqslant v$ and that $z (v')^{- 1} \leqslant
  w$. Then $v' = v$.
\end{lemma}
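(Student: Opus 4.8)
The plan is to squeeze the element $U_{w^{-1}}\downarrow z$ between $v$ and $v'$, using the identity $v = \vmin(u,w) = U_{w^{-1}}\downarrow u$. Concretely, I will show $v \leqslant U_{w^{-1}}\downarrow z \leqslant v' \leqslant v$; then all of these elements are equal, and in particular $v' = v$. (Observe that $v$ itself is an admissible value of $v'$: from $z\in[u,wv]=\llbracket u,w\rrbracket$ we have $zv^{-1}\leqslant w$, so the real content of the lemma is that no strictly smaller $v'$ works.) Only the monotonicity properties of $\downarrow$ from Section~\ref{sec:bruhat} and Theorem~\ref{translatemin} will be needed; in fact the hypothesis $z\leqslant wv$ is never used, only $u\leqslant z$.

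For the upper bound $U_{w^{-1}}\downarrow z\leqslant v'$: taking inverses in $z(v')^{-1}\leqslant w$ gives $v'z^{-1}\leqslant w^{-1}$, so $v'$ lies in the right-translated Bruhat interval $[1,w^{-1}]z=\{a\in W\mid az^{-1}\leqslant w^{-1}\}$. By Theorem~\ref{translatemin} this interval has minimal element $U_{w^{-1}}\downarrow z$, whence $U_{w^{-1}}\downarrow z\leqslant v'$, and then $U_{w^{-1}}\downarrow z\leqslant v$ by the hypothesis $v'\leqslant v$.

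For the lower bound: since $u\leqslant z$, Proposition~\ref{downmonotone} gives $U_{w^{-1}}\downarrow z\geqslant U_{w^{-1}}\downarrow u=v$. Combining the two bounds completes the argument. The only step that requires a moment of care — and the nearest thing here to an obstacle — is purely bookkeeping: tracking the inverses correctly when rewriting $z(v')^{-1}\leqslant w$ as membership in a translated interval, and making sure the monotonicity invoked is that of Proposition~\ref{downmonotone} (the argument being varied is $z$) rather than Proposition~\ref{downmonotonebis}. There is no combinatorial subtlety of the kind encountered in Proposition~\ref{ladder}.
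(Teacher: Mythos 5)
Your proof is correct and follows exactly the same route as the paper's: rewrite $z(v')^{-1}\leqslant w$ as $v'\in[1,w^{-1}]z$, use Theorem~\ref{translatemin} to get $U_{w^{-1}}\downarrow z\leqslant v'$, and then Proposition~\ref{downmonotone} with $u\leqslant z$ to sandwich $v\leqslant U_{w^{-1}}\downarrow z\leqslant v'\leqslant v$. Your side remark that only $u\leqslant z$ (not $z\leqslant wv$) is used is also accurate.
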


\begin{proof}
  Since $v' z^{- 1} \leqslant w^{- 1}$ we have $v' \in [1, w^{- 1}] z$ and so
  by Theorem~\ref{translatemin} we have $U_{w^{- 1}} \downarrow z \leqslant
  v'$. Since $u \leqslant z$, Proposition~\ref{downmonotone} implies that $v =
  U_{w^{- 1}} \downarrow u \leqslant v'$, so $v' = v$.
\end{proof}

\begin{proposition}
  \label{thetaeval}Suppose that $v = v_{\min} (u, w)$ and that $z \in [u, w
  v]$. Then $\Theta (z, v, w) = q^{\ell (z)}$.
\end{proposition}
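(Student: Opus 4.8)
The plan is to compute $\Theta(z,v,w)=\Lambda_w(T_zT_{v^{-1}})$ directly, by expanding $T_zT_{v^{-1}}$ in the basis $\{T_a\}$ and showing that Lemma~\ref{ionlyv} forces $\Lambda_w$ to annihilate every term except one.

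First I would fix a reduced expression $v=s_1\cdots s_k$, so that $T_zT_{v^{-1}}=T_zT_{s_k}T_{s_{k-1}}\cdots T_{s_1}$, and expand this product from the left using the quadratic relations $T_i^2=(q-1)T_i+q$. A routine induction shows that every $T_a$ occurring in the expansion has $a$ of the form $z\,s_{j_1}s_{j_2}\cdots s_{j_p}$ with $k\geqslant j_1>j_2>\cdots>j_p\geqslant 1$; equivalently $a=z(v')^{-1}$, where $v'=s_{j_p}s_{j_{p-1}}\cdots s_{j_1}$ is the element represented by the subword of $s_1\cdots s_k$ in positions $j_p<j_{p-1}<\cdots<j_1$. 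Since a (possibly non-reduced) subword of a reduced word represents a Bruhat-smaller element, we get $v'\leqslant v$.

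Now suppose such a term $T_a$, with $a=z(v')^{-1}$ and $v'\leqslant v$, contributes to $\Lambda_w(T_zT_{v^{-1}})$; then $a\leqslant w$. Since $v=\vmin(u,w)$, $z\in[u,wv]$, $v'\leqslant v$ and $z(v')^{-1}\leqslant w$, Lemma~\ref{ionlyv} forces $v'=v$, hence $a=zv^{-1}$. Thus $\Lambda_w$ kills every basis element in the expansion except $T_{zv^{-1}}$, so $\Theta(z,v,w)=c\cdot\Lambda_w(T_{zv^{-1}})$, where $c$ is the coefficient of $T_{zv^{-1}}$ in $T_zT_{v^{-1}}$. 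It then remains to identify $c$ and $\ell(zv^{-1})$, and this is where Proposition~\ref{ladder} enters. By (\ref{zchain}) there is a strictly descending chain $z>zs_k>zs_ks_{k-1}>\cdots>zs_k\cdots s_1=zv^{-1}$, so $\ell(zv^{-1})=\ell(z)-k=\ell(z)-\ell(v)$, and $zv^{-1}\leqslant w$ (the case $r=0$ of (\ref{fullladder})), whence $\Lambda_w(T_{zv^{-1}})=q^{\ell(z)-\ell(v)}$. For $c$: the only subword $s_{j_1}\cdots s_{j_p}$ (decreasing indices) equal to $s_ks_{k-1}\cdots s_1$ is the full one, since the latter has length $k$; and along the corresponding path in the expansion every multiplication $zs_k\cdots s_{j+1}\mapsto zs_k\cdots s_j$ is length-decreasing by the chain above, so it uses the $qT_{as_j}$ branch of $T_aT_{s_j}=(q-1)T_a+qT_{as_j}$ and contributes a factor $q$. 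Hence $c=q^k=q^{\ell(v)}$, and $\Theta(z,v,w)=q^{\ell(v)}\cdot q^{\ell(z)-\ell(v)}=q^{\ell(z)}$.

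I expect the main obstacle to be the bookkeeping in the expansion: making the claim that every $T_a$ has $a=z(v')^{-1}$ with $v'\leqslant v$ precise enough to feed into Lemma~\ref{ionlyv}, and checking that the coefficient of $T_{zv^{-1}}$ is exactly $q^{\ell(v)}$. The latter is clean only because the descending chain of (\ref{zchain}) guarantees that all the multiplications producing the minimal term are length-decreasing, so none of the ambiguous ``$(q-1)T_a$ versus $qT_{as_j}$'' choices can interfere with that term.
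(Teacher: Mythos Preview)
Your proposal is correct and follows essentially the same route as the paper's proof: expand $T_zT_{v^{-1}}=T_zT_{s_k}\cdots T_{s_1}$, observe that the support lies in $\{z(v')^{-1}:v'\leqslant v\}$, invoke Lemma~\ref{ionlyv} to kill all terms with $v'\neq v$, and identify the coefficient of $T_{zv^{-1}}$ as $q^{\ell(v)}$ using the descending chain~(\ref{zchain}). Your write-up is in fact slightly more explicit than the paper's at two points: you spell out why only one path in the expansion reaches $zv^{-1}$ (the length argument $\ell(v^{-1})=k$), and you cite $zv^{-1}\leqslant w$ from~(\ref{fullladder}) before evaluating $\Lambda_w(T_{zv^{-1}})$.
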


\begin{proof}
  Recall that $\Theta (z, v, w) = \Lambda_w (T_z T_{v^{- 1}})$. Expanding $T_z
  T_{v^{- 1}} = T_z T_{s_k} \cdots T_{s_1}$ using the fact that
  \begin{equation}
  \label{tscases}
  T_y T_s = \left\{\begin{array}{ll}
       T_{y s} & \text{if $y s > y$},\\
       (q - 1) T_y + q T_{y s} & \text{if $y s < y$},
     \end{array}\right.
  \end{equation}   
  it is clear that 
  \[\supp (T_z T_{v^{- 1}}) \subseteq \{T_{z (v')^{- 1}}\,|\,v'\leqslant v\}\;.\]
  By Lemma~\ref{ionlyv}, only $v' = v$ can contribute
  to the value of $\Lambda_w$. Moreover the only way to obtain $T_{z v^{- 1}}$
  is to select $q T_{y s}$ in the descent case of (\ref{tscases}) each time. Thus
  \[ T_z T_{v^{- 1}} = q^{\ell (v)} T_{z v^{- 1}} + \text{other terms} \]
  where the other terms are linear combinations of $T_{z (v')^{- 1}}$ with $v'
  < v$. Note that $\ell(v)+\ell(z v^{-1})=\ell(z)$ by (\ref{zchain}) in
  Proposition~\ref{ladder}. Thus applying $\Lambda_w$ we get
  \[ \Theta (z, v, w) = \Lambda_w (q^{\ell (v)} T_{z v^{- 1}}) = q^{\ell (v) +
     \ell (z v^{- 1})} = q^{\ell (z)} . \qedhere\]
\end{proof}

\begin{proof}[Proof of Theorem~\ref{maintheorem}]
  A portion of Theorem~\ref{maintheorem} is contained in
  Proposition~\ref{firstpart}. To finish the proof we need
  to extablish (\ref{sigmabruhat}).
  By Proposition~\ref{ladder}, we may rewrite (\ref{sigmao}) as
  \[ q^{- \ell (v)} \sum_{z \in [u, w v]} \Theta (z, v, w), \]
  and now (\ref{sigmabruhat}) follows from Proposition~\ref{thetaeval}.
\end{proof}
\bibliographystyle{habbrv}
\bibliography{matinter}
\end{document}